\documentclass[11pt,reqno]{amsart}
\usepackage{amssymb}
\usepackage{amsfonts}

\numberwithin{equation}{section}
     
\theoremstyle{plain}

\newtheorem{theorem}[subsection]{Theorem}
\newtheorem{proposition}[subsection]{Proposition}
\newtheorem{lemma}[subsection]{Lemma}
\newtheorem{corollary}[subsection]{Corollary}
\newtheorem{claim}[subsection]{Claim}
\theoremstyle{definition}
\newtheorem{definition}[subsection]{Definition}

\newtheorem*{mpet-rpt}{Theorem \ref{mpet}}

\renewcommand{\leq}{\leqslant}
\renewcommand{\geq}{\geqslant}

\newcommand{\md}[1]{\ensuremath{(\operatorname{mod}\, #1)}}





\renewcommand{\mod}{{\ \operatorname{mod}\ }}
\newcommand\E{{\mathbb{E}}}
\newcommand\Z{\mathbb{Z}}
\newcommand\R{\mathbb{R}}

\newcommand\C{\mathbb{C}}
\newcommand\N{\mathbb{N}}
\newcommand\F{\mathbb{F}}

\newcommand\B{\mathcal{B}}

\newcommand\Q{\mathbb{Q}}
\newcommand\gw{\operatorname{gw}}

\newcommand\ab{\operatorname{ab}}

\newcommand\eps{\varepsilon}
\newcommand\GI{\operatorname{GI}}
\newcommand\HP{\operatorname{HP}}

\newcommand\poly{\operatorname{poly}}
\renewcommand\deg{\operatorname{deg}}
\newcommand\id{\operatorname{id}}

\newcommand\str{{\operatorname{str}}}
\newcommand\nil{{\operatorname{nil}}}
\newcommand\sml{{\operatorname{sml}}}
\newcommand\unf{{\operatorname{unf}}}
\newcommand\dist{{\operatorname{dist}}}
\newcommand\Grow{{\mathcal F}}


\begin{document}
\title[Arithmetic regularity and counting lemmas]{An arithmetic regularity lemma, an associated counting lemma, and applications}
\author{Ben Green}
\address{Mathematical Institute\\
Woodstock Road\\
Oxford OX2 6GG\\
England}
\email{ben.green@maths.ox.ac.uk}
\author{Terence Tao}
\address{Department of Mathematics\\
UCLA\\
Los Angeles, CA 90095\\
USA}
\email{tao@math.ucla.edu}

\subjclass{}

\begin{abstract} Szemer\'edi's regularity lemma can be viewed as a rough structure theorem for arbitrary dense graphs, decomposing such graphs into a structured piece (a partition into cells with edge densities), a small error (corresponding to irregular cells), and a uniform piece (the pseudorandom deviations from the edge densities). We establish an \emph{arithmetic regularity lemma} that similarly decomposes bounded functions $f : [N] \rightarrow \C$, into a (well-equidistributed, virtual) $s$-step nilsequence, an error which is small in $L^2$ and a further error which is minuscule in the Gowers $U^{s+1}$-norm, where $s \geq 1$ is a parameter. We then establish a complementary \emph{arithmetic counting lemma} that counts arithmetic patterns in the nilsequence component of $f$.

We provide a number of applications of these lemmas: a proof of Szemer\'edi's theorem on arithmetic progressions, a proof of a conjecture of Bergelson, Host and Kra, and a generalisation of certain results of Gowers and Wolf.

Our result is dependent on the inverse conjecture for the Gowers $U^{s+1}$ norm, recently established for general $s$ by the authors and T.~Ziegler.
\end{abstract}

\maketitle

\begin{center}
\emph{To Endre Szemer\'edi on the occasion of his 70th birthday.}
\end{center}

\tableofcontents

\setcounter{tocdepth}{1}

\section{Introduction}

\emph{Important note added October 2020.} This paper has been revised so that only systems of linear forms satisfying a condition called the \emph{flag property} (see \eqref{flag-property}) are covered by the counting lemma (Lemma \ref{count-lem}). Translation-invariant systems, as well as systems of Cauchy-Schwarz complexity 1, have this property. We thank Daniel Altman for drawing our attention to what appeared to be a minor technical issue in one of our proofs but which ultimately led us to realise that the the  counting lemma fails quite badly (with rather simple examples, which we shall describe) without some assumption of this type.\vspace*{11pt}

Szemer\'edi's celebrated \emph{regularity lemma} \cite{szemeredi-aps, szemeredi-reg} is a fundamental tool in graph theory; see for instance \cite{komlos} for a survey of some of its many applications.  It is often described as a structure theorem for graphs $G = (V,E)$, but one may also view it as a decomposition for arbitrary functions $f: V \times V \to [0,1]$. For instance, one can recast the regularity lemma in the following ``analytic'' form.  Define a \emph{growth function} to be any monotone increasing function $\Grow: \R^+ \to \R^+$ with $\Grow(M) \geq M$ for all $M$.

\begin{lemma}[Szemer\'edi regularity lemma, analytic form]\label{szrl}  Let $V$ be a finite vertex set, let $f: V \times V \to [0,1]$ be a function, let $\eps > 0$, and let $\Grow: \R^+ \to \R^+$ be a growth function.  Then there exists an positive integer\footnote{As usual, we use $O(X)$ to denote a quantity bounded in magnitude by $CX$ for some absolute constant $X$; if we need $C$ to depend on various parameters, we will indicate this by subscripts.  Thus for instance $O_{\eps,\Grow}(1)$ is a quantity bounded in magnitude by some expression $C_{\eps,\Grow}$ depending on $\eps,\Grow$.} $M = O_{\eps,\Grow}(1)$ and a decomposition
\begin{equation}\label{fss}
 f = f_\str + f_\sml + f_\unf
\end{equation}
of $f$ into functions $f_\str, f_\sml, f_\unf: V \times V \to [-1,1]$ such that:

\textup{(}$f_\str$ structured\textup{)} $V$ can be partitioned into $M$ cells $V_1,\ldots,V_M$, such that $f_\str$ is constant on $V_i \times V_j$ for all $i,j$ with $1 \leq i,j \leq M$;

\textup{(}$f_\sml$ small\textup{)} The quantity\footnote{We use here the expectation notation $\E_{a \in A} f(a) := \frac{1}{|A|} \sum_{a \in A} f(a)$ for any finite non-empty set $A$, where $|A|$ denotes the cardinality of $A$.} $\|f_\sml\|_{L^2(V \times V)} := (\E_{v,w \in V} |f_\sml(v,w)|^2)^{1/2}$ is at most $\eps$.

\textup{(}$f_\unf$ very uniform\textup{)}  The box norm  $\| f_\unf\|_{\Box^2(V \times V)} $, defined to be the quantity
$$ (\E_{v_1,v_2,w_1,w_2 \in V} f_\unf(v_1,w_1) f_\unf(v_1,w_2) f_\unf(v_2,w_1) f_\unf(v_2,w_2))^{1/4},$$
is at most $1/\Grow(M)$.

\textup{(}Nonnegativity\textup{)} $f_{\str}$ and $f_\str + f_\sml$ take values in $[0,1]$.
\end{lemma}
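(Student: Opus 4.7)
The plan is an energy-increment argument iterated on two levels to accommodate the growth function $\Grow$. Given a partition $\mathcal{P}$ of $V$, write $\E[f|\mathcal{P}\otimes\mathcal{P}]$ for the function which takes the mean value of $f$ on each cell $V_i\times V_j$, and define the \emph{energy} $\mathrm{en}(\mathcal{P}):=\|\E[f|\mathcal{P}\otimes\mathcal{P}]\|_{L^2(V\times V)}^2$; this quantity is non-decreasing under refinement and bounded above by $1$ since $f$ is $[0,1]$-valued.

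First I would prove an inner \emph{weak regularity step}: for any partition $\mathcal{P}$ of $V$ into $M$ cells and any $\eta>0$, there is a refinement $\mathcal{P}'$ into at most $M\cdot 2^{O(\eta^{-4})}$ cells with $\|f-\E[f|\mathcal{P}'\otimes\mathcal{P}']\|_{\Box^2}\leq\eta$. This is a standard energy-increment loop: setting $g:=f-\E[f|\mathcal{P}\otimes\mathcal{P}]$, the identity
\[
\|g\|_{\Box^2}^4 = \E_{v_1,v_2\in V}\bigl(\E_{w\in V} g(v_1,w)g(v_2,w)\bigr)^2
\]
combined with a Cauchy--Schwarz and pigeonhole argument shows that whenever $\|g\|_{\Box^2}>\eta$ there is a product set $A\times B\subseteq V\times V$ whose indicator correlates with $g$ at level $\gg\eta^2$; refining $\mathcal{P}$ by $A$ and $B$ on each side then raises the energy by $\gg\eta^4$, so after $O(\eta^{-4})$ such refinements the box norm drops below $\eta$.

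The outer iteration proceeds as follows. Set $\mathcal{P}_0:=\{V\}$, $M_0:=1$. At stage $j$, apply the inner step to $\mathcal{P}_j$ with parameter $\eta:=1/\Grow(M_j)$ to obtain a refinement $\mathcal{P}_j'$; write $f_j:=\E[f|\mathcal{P}_j\otimes\mathcal{P}_j]$ and $f_j':=\E[f|\mathcal{P}_j'\otimes\mathcal{P}_j']$. If $\|f_j'-f_j\|_{L^2}\leq\eps$, halt and declare
\[
f_\str := f_j, \quad f_\sml := f_j' - f_j, \quad f_\unf := f - f_j'.
\]
Otherwise set $\mathcal{P}_{j+1}:=\mathcal{P}_j'$ and continue. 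Since $f_j=\E[f_j'|\mathcal{P}_j\otimes\mathcal{P}_j]$, the Pythagorean identity $\|f_j'\|_{L^2}^2=\|f_j\|_{L^2}^2+\|f_j'-f_j\|_{L^2}^2$ yields an energy gain of at least $\eps^2$ at every non-halting stage, so the loop must halt within $\lceil 1/\eps^2\rceil$ steps. The resulting $M$ is obtained by iterating $M\mapsto M\cdot 2^{O(\Grow(M)^4)}$ a bounded number of times, hence $M=O_{\eps,\Grow}(1)$. Nonnegativity is automatic: $f_\str$ is a conditional expectation of a $[0,1]$-valued function, and so is $f_\str+f_\sml=f_j'$; the $L^2$ bound on $f_\sml$ is the halting criterion; and $\|f_\unf\|_{\Box^2}\leq 1/\Grow(M)$ is the output of the inner step at the halting stage.

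I expect the main obstacle to be a bookkeeping issue rather than a new idea: the box-norm bound at termination must be expressed in terms of $M=M_j$, the size of the \emph{coarse} partition defining $f_\str$, rather than the size of the much larger refinement $\mathcal{P}_j'$. This mismatch is exactly what forces the two-level (coarse/fine) structure of the iteration and prevents a single pass (as in the original Szemer\'edi lemma) from sufficing; keeping the coarse and fine partitions straight while verifying that the energy increment truly dominates the growth of $M$ at each stage is the step where one has to be careful.
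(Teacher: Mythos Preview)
The paper does not actually prove Lemma~\ref{szrl}; it is quoted as known background (with a pointer to \cite{tao-revisit}) to motivate the arithmetic regularity lemma. So there is no ``paper's own proof'' to compare against here. That said, your two-level energy-increment argument is correct and is precisely the template the paper later follows when it \emph{does} prove something, namely the non-irrational arithmetic regularity lemma (Proposition~\ref{semi-reg}): an inner weak-regularity loop (their Corollary~\ref{weak-reg}) wrapped in an outer pigeonhole on energy increments of size $\eps^2$. Your identification of the key bookkeeping point---that the box-norm threshold $1/\Grow(M)$ must be measured against the \emph{coarse} partition size $M=M_j$, not the fine one---is exactly right and is what forces the nested structure.

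One minor quantitative slip: from $\|g\|_{\Box^2}>\eta$ one extracts a rank-one function $\phi\otimes b$ (with $|\phi|,|b|\le 1$) correlating with $g$ at level $\gg\eta^4$, not $\eta^2$; passing to a product of sets and then to an energy increment costs further polynomial factors in $\eta$. So the number of inner refinements is $O(\eta^{-C})$ for some absolute $C$ larger than $4$. This changes nothing in the conclusion $M=O_{\eps,\Grow}(1)$, since any polynomial dependence on $\eta=1/\Grow(M)$ is absorbed into the iterated growth function, but you should adjust the stated exponents if you write this out carefully.
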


Informally, this regularity lemma decomposes any bounded function into a structured part, a small error, and an extremely uniform error.  While this formulation does not, at first sight, look much like the usual regularity lemma, it easily implies that result: see \cite{tao-revisit}.  The idea of formulating the regularity lemma with an arbitrary growth function $\Grow$ first appears in \cite{afks}, and is also very useful for generalisations of the regularity lemma to hypergraphs. See, for example,  \cite{tao-hypergraph}.  The bound on $M$ turns out to essentially be an iterated version of the growth function $\Grow$, with the number of iterations being polynomial in $1/\eps$.  In applications, one usually selects the growth function to be exponential in nature, which then makes $M$ essentially tower-exponential in $1/\eps$.  See \cite{tao-icm,tao-focs} for a general discussion of these sorts of structure theorems and their applications in combinatorics.  See also \cite{lovasz} for a related analytical perspective on the regularity lemma.

In applications the regularity lemma is often paired with a \emph{counting lemma} that allows one to control various expressions involving the function $f$. For example, one might consider the expression
\begin{equation}\label{fff}
\E_{u,v,w \in V} f(u,v) f(v,w) f(w,u),
\end{equation}
which counts triangles in $V$ weighted by $f$.  Applying the decomposition \eqref{fss} splits expressions such as \eqref{fff} into multiple terms (in this instance, $27$ of them).  The key fact, which is a slightly non-trivial application of the Cauchy-Schwarz inequality, is that  the terms involving the box-norm-uniform error $f_\unf$ are negligible if the growth function $\Grow$ is chosen rapidly enough.  The terms involving the small error $f_\sml$ are somewhat small, but one often has to carefully compare those errors against the main term (which only involves $f_\str$) in order to get a non-trivial bound on the final expression \eqref{fff}.  In particular, one often needs to exploit the positivity of $f_\str$ and $f_\str+f_\sml$ to first \emph{localise} expressions such as \eqref{fff} to a small region (such as the portion of a graph between a ``good'' triple $V_i,V_j,V_k$ of cells in the partition of $V$ associated to $f_\str$) before one can obtain a useful estimate.

The graph regularity and counting lemmas can be viewed as the first non-trivial member of a hierarchy of \emph{hypergraph} regularity and counting lemmas, see e.g. \cite{chung,gowers-hypergraph,gowers-hyper-4,nagle-rodl-schacht1,nagle-rodl-schacht,tao-hypergraph}.  The formulation in \cite{tao-hypergraph} is particularly close to the formulation given in Theorem \ref{szrl}.  These lemmas are suitable for controlling higher order expressions such as
$$
\E_{u,v,w,x \in V} f(u,v,w) f(v,w,x) f(w,x,u) f(x,u,v).
$$

Our objective in this paper is to introduce an analogous hierarchy of such regularity and counting lemmas (one for each integer $s \geq 1$), in \emph{arithmetic} situations. Here, the aim is to decompose a function $f: [N] \to [0,1]$ defined on an arithmetic progression $[N] := \{1,\ldots,N\}$ instead of a graph. One is interested in counting averages such as
$$
\E_{n,r \in [N]} f(n) f(n+r) f(n+2r),
$$
which counts $3$-term arithmetic progressions weighted by $f$, as well as higher order expressions such as
$$
\E_{n,r \in [N]} f(n) f(n+r) f(n+2r) f(n+3r).
$$
As it turns out, the former average will be best controlled using the $s=1$ regularity and counting lemmas, while the latter requires the $s=2$ versions of these lemmas.  In this paper we shall see several examples of these types of applications of the two lemmas.\vspace{11pt}

\textsc{The arithmetic regularity lemma.} We begin with by formulating our regularity lemma. Following the statement we explain the terms used here.

\begin{theorem}[Arithmetic regularity lemma]\label{strong-reg}
Let $f: [N] \to [0,1]$ be a function, let $s \geq 1$ be an integer, let $\eps > 0$, and let $\Grow: \R^+ \to \R^+$ be a growth function.  Then there exists a quantity $M = O_{s,\eps,\Grow}(1)$ and a decomposition 
$$ f = f_{\nil} + f_{\sml} + f_{\unf}$$
of $f$ into functions $f_\nil, f_\sml, f_\unf: [N] \to [-1,1]$ of the following form:

\textup{(}$f_\nil$ structured\textup{)} $f_\nil$ is a $(\Grow(M),N)$-irrational virtual nilsequence of degree $\leq s$, complexity $\leq M$, and scale $N$;

\textup{(}$f_\sml$ small\textup{)} $f_\sml$ has an $L^2[N]$ norm of at most $\eps$;

\textup{(}$f_\unf$ very uniform\textup{)} $f_\unf$ has a $U^{s+1}[N]$ norm of at most $1/\Grow(M)$;

\textup{(}Nonnegativity\textup{)} $f_{\nil}$ and $f_\nil + f_\sml$ take values in $[0,1]$.
\end{theorem}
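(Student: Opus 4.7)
The proof proceeds by an energy-increment iteration that pits the inverse conjecture for the $U^{s+1}$-norm (cited in the abstract as joint work of the authors and Ziegler) against the monotone increase of the $L^2$-energy of a conditional expectation. Concretely, I would maintain at the $k$-th step a $\sigma$-algebra $\B_k$ on $[N]$ generated by the (discretized) level sets of a bounded list of degree-$\leq s$ nilsequences of complexity $\leq M_k$, together with its conditional expectation $f_k := \E(f | \B_k)$. If $\|f - f_k\|_{U^{s+1}[N]} \leq 1/\Grow(M_k)$ then the iteration halts. Otherwise the inverse theorem yields a degree-$s$ nilsequence $\psi_k$ of complexity $O_{\Grow(M_k)}(1)$ correlating with $f - f_k$; we adjoin its level sets to $\B_k$ to form $\B_{k+1}$ and update $M_{k+1}$ accordingly. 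Standard orthogonality gives an energy increment $\|f_{k+1}\|_2^2 - \|f_k\|_2^2 \gtrsim 1/\Grow(M_k)^2$, which forces termination after $O_{\eps,\Grow}(1)$ steps since the total $L^2$-energy is bounded by $\|f\|_\infty^2 \leq 1$.

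Upon termination the candidate structured piece is $\E(f | \B_{\mathrm{final}})$. It automatically takes values in $[0,1]$ (being the conditional expectation of a $[0,1]$-valued function), and $f - \E(f | \B_{\mathrm{final}})$ is $U^{s+1}$-small; it remains to rewrite $\E(f | \B_{\mathrm{final}})$ as a genuine virtual nilsequence of the required kind, modulo a small $L^2$ error. To do this I would combine the finitely many nilsequences $\psi_0, \psi_1, \ldots$ produced during the iteration into a single nilsequence on the product nilmanifold $G/\Gamma$, then approximate the indicator functions of its (discretized) level sets by a Lipschitz function $F$ of controlled norm; this produces a single virtual nilsequence $n \mapsto F(g(n)\Gamma, n/N)$ of bounded complexity that agrees with $\E(f|\B_{\mathrm{final}})$ outside a set of arbitrarily small measure, with the discrepancy absorbed into $f_\sml$.

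The final and most delicate step is the irrationality upgrade, which converts the underlying polynomial sequence $g(n)\Gamma$ into a $(\Grow(M),N)$-irrational one. Here I would invoke a quantitative factorization theorem for polynomial sequences on nilmanifolds (in the spirit of the Green--Tao factorization theorem), which writes $g = \eps_1 g' \gamma$ as a product of a smooth piece, an irrational piece, and a periodic piece; by restricting to a subprogression on which the smooth and periodic pieces are essentially constant, one replaces $g$ by its irrational factor $g'$ on that subprogression. Gluing these local descriptions together (and inflating the complexity to cover the various sub-nilmanifolds simultaneously) yields the desired $f_\nil$.

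The main obstacle is the bookkeeping around this factorization: the factorization theorem inflates the complexity of the nilmanifold, and the irrationality demanded of the output is measured against the new, larger complexity. One must therefore set up the iteration and the growth parameters so that the final bound $M$ absorbs this compounded loss, which is what drives the dependence of $M$ on $\eps$ and $\Grow$ into a tower-type bound. A secondary nuisance is preserving the nonnegativity statement (iv): the Lipschitz approximation to the indicator functions can push $f_\nil$ slightly outside $[0,1]$, and the resulting small overshoot has to be absorbed into $f_\sml$ and paid for by taking the parameter $\eps$ slightly smaller at the outset.
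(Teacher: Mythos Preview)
Your outline captures the right ingredients---the inverse conjecture drives an energy increment, and a factorisation theorem provides the irrationality upgrade---but there are two genuine gaps.

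First, your single-level iteration does not correctly couple the complexity of $f_\nil$ to the uniformity of $f_\unf$. You halt when $\|f-f_k\|_{U^{s+1}}\leq 1/\Grow(M_k)$, but the Lipschitz approximation of $\E(f|\B_k)$ and the subsequent irrationality upgrade each inflate the complexity strictly beyond $M_k$: the final $f_\nil$ has complexity $M$ with $M = O_{M_k,\eps,\Grow}(1) > M_k$. The theorem demands $\|f_\unf\|_{U^{s+1}}\leq 1/\Grow(M)$, not $1/\Grow(M_k)$, and since $\Grow$ is monotone and arbitrary these are incompatible. You flag this as ``bookkeeping'' but it is structural: one must run the iteration against a strictly faster growth function $\Grow'$ chosen so that $\Grow'(M_k)\geq\Grow(M)$ for every complexity $M$ that can arise from $M_k$ after the later steps. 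The paper implements this via a two-scale argument: it first proves a weak regularity lemma (for any target $\delta$, find a factor $\B$ of bounded complexity with $\|f-\E(f|\B)\|_{U^{s+1}}\leq\delta$), then applies it along a rapidly growing sequence $M_{i+1}=\tilde\Grow(M_i)$ and pigeonholes on the energies to locate consecutive factors $\B_i,\B_{i+1}$ with $\|\E(f|\B_{i+1})-\E(f|\B_i)\|_{L^2}\leq\eps/2$. Then $f_\nil$ is a nilsequence approximating $\E(f|\B_i)$ (complexity governed by $M_i$), $f_\unf=f-\E(f|\B_{i+1})$ has $U^{s+1}$ norm at most $1/M_{i+1}=1/\tilde\Grow(M_i)$, and $f_\sml=\E(f|\B_{i+1})-f_\nil$ collects both the inter-level gap and the Lipschitz approximation error. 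Choosing $\tilde\Grow$ fast enough relative to $\Grow$ then beats the downstream complexity inflation. (Incidentally, your claimed increment $\gtrsim 1/\Grow(M_k)^2$ assumes polynomial bounds in the inverse theorem, which are not available; the increment is only $\gg_{s,1/\Grow(M_k)}1$, though this still suffices for termination.)

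Second, your treatment of the irrationality upgrade---``restrict to a subprogression on which the smooth and periodic pieces are essentially constant, then glue''---does not produce a single virtual nilsequence on $[N]$, and misses the point of the word \emph{virtual}. The paper's mechanism is different: given the factorisation $g=\beta g'\gamma$ with $\beta$ $(O_M(1),N)$-smooth and $\gamma$ periodic of period $q=O_M(1)$, one does not restrict to subprogressions at all. Instead one absorbs $\beta$ and $\gamma$ directly into the Lipschitz data by writing
\[
F(g(n)\Gamma)=\tilde F\bigl(g'(n)\Gamma',\, n\bmod q,\, n/N\bigr),
\]
where $\Gamma'\leq\Gamma$ has bounded index and $\tilde F(x,a,y):=F(\beta(Ny)\,\tilde x\,\gamma(\tilde a)\,\Gamma)$ for suitable lifts $\tilde x,\tilde a$. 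The smoothness of $\beta$ makes $\tilde F$ Lipschitz in $y$, and the periodicity of $\gamma$ makes the dependence on $a$ well-defined. This is exactly why the extra arguments $n\bmod q$ and $n/N$ appear in the definition of a virtual nilsequence; your sketch omits the $n\bmod q$ coordinate entirely.
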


\emph{Remark.} This result easily implies the recently proven \emph{inverse conjecture for the Gowers norms} (Theorem \ref{gis-conj}).  Conversely, this inverse conjecture, together with the equidistribution theory of nilsequences, will be the main ingredient used to prove Theorem \ref{strong-reg}.

We prove this theorem in \S  \ref{regularity-sec}.  We turn now to a discussion of the various concepts used in the above statement.  Readers who are interested in applications may skip ahead to the end of the section.

The $L^2[N]$ norm, used to control $f_\sml$, is simply
$$ \|f\|_{L^2[N]} := (\E_{n \in [N]} |f(n)|^2)^{1/2}.$$

We turn next to the Gowers uniformity norm $U^{s+1}[N]$, used to control $f_\unf$.
If $f : G \rightarrow \C$ is a function on a finite additive group $G$, and $k \geq 1$ is an integer, then the \emph{Gowers uniformity norm} $\|f\|_{U^k(G)}$ is defined by the formula
\[ \Vert f \Vert_{U^{k}(G)} := \big(  \E_{x,h_1,\dots,h_k \in G} \Delta_{h_1} \dots \Delta_{h_k}  f(x) \big)^{1/2^k},\]
where $\Delta_h f: G \to \C$ is the multiplicative derivative of $f$ in the direction $h$, defined by the formula
\[ \Delta_h f(x) := f(x+h) \overline{f(x)}.\] 
In this paper we will be concerned with functions on $[N]$, which is not quite a group. To define the Gowers norms of a function $f : [N] \rightarrow \C$, set $G := \Z/\tilde N\Z$ for some integer $\tilde N \geq 2^k N$, define a function $\tilde f : G \rightarrow \C$ by $\tilde f(x) = f(x)$ for $x = 1,\dots,N$ and $\tilde f(x) = 0$ otherwise, and set $\Vert f \Vert_{U^k[N]} := \Vert \tilde f \Vert_{U^k(G)}/\Vert 1_{[N]}\Vert_{U^k(G)}$, where $1_{[N]}$ is the indicator function of $[N]$. It is easy to see that this definition is independent of the choice of $\tilde N$, and so for definiteness one could take $\tilde N := 2^k N$. 
Henceforth we shall write simply $\Vert f \Vert_{U^k}$, rather than $\Vert f \Vert_{U^k[N]}$, since all Gowers norms will be on $[N]$. One can show that $\Vert \cdot \Vert_{U^k}$ is indeed a norm for any $k \geq 2$, though we shall not need this here; see \cite{gowers-longaps}.  For further discussion of the Gowers norms and their relevance to counting additive patterns see \cite{gowers-longaps}, \cite[\S 5]{green-tao-longprimeaps} or \cite[\S 11]{tao-vu}.


Finally, we turn to the notion of a irrational virtual nilsequence, which is the concept that defines the structural component $f_\nil$.  This is the most complicated concept, and requires a certain number of preliminary definitions.  We first need the notion of a \emph{filtered nilmanifold}. The first two sections of \cite{green-tao-nilratner} may be consulted for a more detailed discussion.

\begin{definition}[Filtered nilmanifold] Let $s \geq 1$ be an integer.  A \emph{filtered nilmanifold} $G/\Gamma = (G/\Gamma, G_\bullet)$ of degree $\leq s$ consists of the following data:

A connected, simply-connected nilpotent Lie group $G$;

A discrete, cocompact subgroup $\Gamma$ of $G$ (thus the quotient space $G/\Gamma$ is a compact manifold, known as a \emph{nilmanifold});

A \emph{filtration} $G_\bullet = (G_{(i)})_{i=0}^\infty$ of closed connected subgroups
$$ G = G_{(0)} = G_{(1)} \geq G_{(2)} \geq \ldots$$
of $G$, which are \emph{rational} in the sense that the subgroups $\Gamma_{(i)} := \Gamma \cap G_{(i)}$ are cocompact in $G_{(i)}$, such that $[G_{(i)},G_{(j)}] \subseteq G_{(i+j)}$ for all $i,j \geq 0$, and such that $G_{(i)}=\{\id\}$ whenever $i>s$;

A \emph{Mal'cev basis}\footnote{A Mal'cev basis is a basis $X_1,\ldots,X_{\dim(G)}$ of the Lie algebra of $G$ that exponentiates to elements of $\Gamma$, such that $X_j,\ldots,X_{\dim(G)}$ span a Lie algebra ideal for all $j \leq i \leq \dim(G)$, and $X_{\dim(G)-\dim(G_{(i)})+1},\ldots,X_{\dim(G)}$ spans the Lie algebra of $G_{(i)}$ for all $1 \leq i \leq s$. For a detailed discussion of this concept, see \cite[\S 2]{green-tao-nilratner}.} $\mathcal{X} = (X_1,\ldots,X_{\dim(G)})$ adapted to $G_{\bullet}$.
\end{definition}

Once a Mal'cev basis has been specified, notions such as the rationality of subgroups may be quantified in terms of it. Furthermore one may use a Mal'cev basis to define a metric $d_{G/\Gamma}$ on the nilmanifold $G/\Gamma$. The results of this paper are rather insensitive to the precise metric that one takes, but one may proceed for example as in \cite[Definition 2.2]{green-tao-nilratner}. We encourage the reader not to think too carefully about the precise definition (or about Mal'cev bases in general), but it is certainly important to have some definite metric in mind so that one can make sense of notions such as that of a \emph{Lipschitz function} on $G/\Gamma$.

Observe that every filtered nilmanifold $G/\Gamma$ comes with a canonical \emph{probability Haar measure} $\mu_{G/\Gamma}$, defined as the unique Borel probability measure on $G/\Gamma$ that is invariant under the left action of $G$.  We abbreviate $\int_{G/\Gamma} F(x)\ d\mu_{G/\Gamma}(x)$ as $\int_{G/\Gamma} F$.

We will need a quantitative notion of \emph{complexity} for filtered nilmanifolds, though once again, the precise definition is somewhat unimportant. 

\begin{definition}[Complexity] Let $M \geq 1$.  We say that a filtered nilmanifold $G/\Gamma = (G/\Gamma,G_\bullet)$ has \emph{complexity $\leq M$} if the dimension of $G$, the degree of $G_\bullet$, and the rationality of the Mal'cev basis $\mathcal{X}$ (cf. \cite[Definition 2.4]{green-tao-nilratner}) are bounded by $M$.
\end{definition}

\emph{Heisenberg example.} The model example of a degree $\leq 2$ filtered nilmanifold is the \emph{Heisenberg nilmanifold}
$$ G/\Gamma := \left( \begin{smallmatrix} 1 & \R & \R \\ 0 & 1 & \R \\ 0 & 0 & 1 \end{smallmatrix} \right)/
\left(\begin{smallmatrix} 1 & \Z & \Z \\ 0 & 1 & \Z \\ 0 & 0 & 1 \end{smallmatrix}\right)$$
with the lower central series $G_{(0)}=G_{(1)} = G$ and
$$ G_{(2)} = [G,G] = \left(\begin{smallmatrix} 1 & 0 & \R \\ 0 & 1 & 0 \\ 0 & 0 & 1 \end{smallmatrix} \right)$$
with Mal'cev basis $\mathcal{X} = \{X_1,X_2,X_3\}$ consisting of the matrices
$$ X_1 = \left(\begin{smallmatrix} 0 & 1 & 0 \\ 0 & 0 & 0 \\ 0 & 0 & 0 \end{smallmatrix}\right), X_2 = \left(\begin{smallmatrix} 0 & 0 & 0 \\ 0 & 0 & 1 \\ 0 & 0 & 0 \end{smallmatrix}\right), X_3 = \left(\begin{smallmatrix} 0 & 0 & 1 \\ 0 & 0 & 0 \\ 0 & 0 & 0 \end{smallmatrix}\right).$$

With the definition of filtered nilmanifold in place, the next thing we need is the idea of a \emph{polynomial sequence}. The basic theory of such sequences was laid out in Leibman \cite{leibman-group-1}, and was extended slightly to general filtrations in \cite{green-tao-nilratner}. An extensive discussion may be found in Section 6 of that paper.

\begin{definition}[Polynomial sequence]  Let $(G/\Gamma,G_\bullet)$ be a filtered nilmanifold, with filtration $G_\bullet = (G_{(i)})_{i=0}^\infty$.  A \emph{\textup{(}multidimensional\textup{)} polynomial sequence} adapted to this filtered nilmanifold is a sequence $g: \Z^D \to G$ for some $D \geq 1$ with the property that 
$$ \partial_{h_1} \ldots \partial_{h_i} g(n) \in G_{(i)}$$
for all $i \geq 0$ and $h_1,\ldots,h_i,n \in \Z^D$, where $\partial_h g(n) := g(n+h) g(n)^{-1}$ is the derivative of $g$ with respect to the shift $h$.  The space of all such polynomial sequences will be denoted $\poly(\Z^D, G_\bullet)$.  The space of polynomial sequences taking values in $\Gamma$ will be denoted $\poly(\Z^D, \Gamma_\bullet)$.  When $D=1$, we refer to multidimensional polynomial sequences simply as \emph{polynomial sequences}.
\end{definition}

\emph{Remark.} We will be primarily interested in the one-dimensional case $D=1$, but will need the higher $D$ case in order to establish the counting lemma, Theorem \ref{count-lem}.

One of the main reasons why we work with polynomial sequences, instead of just linear sequences such as $n \mapsto g_0 g_1^n$, is that the former forms a group.

\begin{theorem}[Lazard-Leibman]\label{ll-thm}  If $(G/\Gamma,G_\bullet)$ is a filtered nilmanifold and $D \geq 1$ is an integer, then $\poly(\Z^D,G_\bullet)$ is a group \textup{(}and $\poly(\Z^D,\Gamma_\bullet)$\textup{)} is a subgroup.
\end{theorem}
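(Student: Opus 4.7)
The plan is to verify the three group axioms: that the constant sequence $n \mapsto \id$ lies in $\poly(\Z^D, G_\bullet)$, that the pointwise product of two polynomial sequences is polynomial, and that the pointwise inverse of a polynomial sequence is polynomial. The identity case is immediate, since all iterated derivatives of the constant sequence $\id$ equal $\id \in G_{(i)}$. The subgroup assertion for $\poly(\Z^D, \Gamma_\bullet)$ then follows by running the same verifications inside $\Gamma$, using $\Gamma_{(i)} = \Gamma \cap G_{(i)}$.

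The heart of the matter is a Leibniz-type commutator identity. A short manipulation, with $[a,b] := aba^{-1}b^{-1}$, yields
$$ \partial_h(g_1 g_2)(n) = \partial_h g_1(n) \cdot [g_1(n), \partial_h g_2(n)] \cdot \partial_h g_2(n), $$
and analogously $\partial_h(g^{-1})(n) = g(n)^{-1} (\partial_h g(n))^{-1} g(n)$ for the inverse. For $g_1, g_2 \in \poly(\Z^D, G_\bullet)$ the outer factors already lie in $G_{(1)}$, and the commutator lies in $[G_{(0)}, G_{(1)}] \subseteq G_{(1)}$, verifying the first-order polynomial condition. For iterated derivatives one would apply these identities recursively, obtaining an expression for $\partial_{h_1} \cdots \partial_{h_k}(g_1 g_2)(n)$ as an ordered product of iterated commutators $[u_1, [u_2, [\ldots, [u_m, v] \ldots]]]$ whose entries $u_j$ and $v$ are iterated derivatives of $g_1$ or $g_2$; the filtration axiom $[G_{(i)}, G_{(j)}] \subseteq G_{(i+j)}$ then adds up the derivative orders of the entries in each commutator to produce an element of $G_{(k)}$.

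To keep this bookkeeping manageable, I would induct on the degree $s$ of the filtration. The filtration axiom gives $[G, G_{(s)}] = [G_{(1)}, G_{(s)}] \subseteq G_{(s+1)} = \{\id\}$, so $G_{(s)}$ is a closed central (hence normal and abelian) subgroup. One can then pass to the quotient nilmanifold $G/G_{(s)}$ with the induced filtration $(G_{(i)}/G_{(s)})$, which has degree $\leq s-1$, and apply the inductive hypothesis to conclude that $g_1 g_2$ and $g^{-1}$ are polynomial in the quotient, giving the required $G_{(i)}$-membership for all $i < s$. The remaining conditions -- that the $s$-fold derivatives lie in $G_{(s)}$ and the $(s+1)$-fold derivatives vanish -- follow from a direct calculation using the centrality of $G_{(s)}$, since every commutator term involving a factor in $G_{(s)}$ is trivial, and the residual computation takes place in the abelian group $G_{(s)}$, where polynomial sequences of bounded degree evidently form a group. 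The base case $s = 1$ reduces to the statement that affine maps $\Z^D \to G$ into an abelian group $G$ form a group, which is immediate.

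The principal obstacle is the combinatorial expansion of iterated derivatives, particularly the tracking of conjugation factors appearing when a commutator passes a neighbouring derivative, and of commutators of commutators that proliferate at higher orders. The inductive reduction to $G/G_{(s)}$ tames this by isolating a single central layer at each stage, so that only one abelian computation need be performed per step of the induction; alternatively one could use a Hall--Petresco normal form $g(n) = \prod_i g_i^{\binom{n}{i}}$ with $g_i \in G_{(i)}$ to reduce product and inverse formulas to commutator identities in $G$.
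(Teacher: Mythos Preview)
The paper does not give its own proof; it simply cites \cite{leibman-group-2} and \cite[Proposition 6.2]{green-tao-nilratner}. Your Leibniz identity
\[
\partial_h(g_1g_2)=(\partial_h g_1)\,[g_1,\partial_h g_2]\,(\partial_h g_2)
\]
is correct and is indeed the engine of the cited proofs.

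The gap is in your inductive reduction. Quotienting by the central subgroup $G_{(s)}$ and invoking the degree $\leq s-1$ case tells you only that $\partial_{h_1}\cdots\partial_{h_i}(g_1g_2)\in G_{(i)}G_{(s)}$ for all $i$; for $i\geq s$ this says merely that the derivative lies in $G_{(s)}$, \emph{not} that the $(s{+}1)$-fold derivative vanishes. Your sentence ``the residual computation takes place in the abelian group $G_{(s)}$, where polynomial sequences of bounded degree evidently form a group'' does not close this: you have not identified any abelian polynomial structure on $n\mapsto\partial_{h_1}\cdots\partial_{h_s}(g_1g_2)(n)$ that would force one further derivative to be trivial. (Indeed, an arbitrary function satisfying the polynomial condition for $i\leq s$ need not satisfy it for $i=s+1$, so one must use that $g_1g_2$ is specifically a product of polynomial sequences --- which is precisely the combinatorial expansion you were trying to avoid.)

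The fix used in \cite[\S 6]{green-tao-nilratner} is to strengthen the induction so that the Leibniz formula can be fed directly back into the hypothesis. One proves, by induction on the total length of the filtrations involved, a statement of the form: if $g_1$ is polynomial with respect to one filtration and $g_2$ with respect to another, then $g_1g_2$ is polynomial with respect to their product filtration and $[g_1,g_2]$ with respect to their commutator filtration. Then in $\partial_h(g_1g_2)=(\partial_h g_1)\,[g_1,\partial_h g_2]\,(\partial_h g_2)$, each factor is polynomial with respect to a filtration of strictly smaller length (since $\partial_h g_j$ lives in the shifted filtration $(G_{(i+1)})_i$), and the induction closes. Your alternative via Taylor expansion also works, but one must first establish the Taylor characterisation of $\poly(\Z^D,G_\bullet)$ independently of the group property to avoid circularity.
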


\begin{proof} See \cite{leibman-group-2} or \cite[Proposition 6.2]{green-tao-nilratner}.  
\end{proof}

With the concept of a polynomial sequence in hand, it is easy to define a \emph{polynomial orbit}.

\begin{definition}[Orbits]  Let $D, s \geq 1$ be integers, and $M, A > 0$ be parameters.  A  \emph{\textup{(}multidimensional\textup{)} polynomial orbit} of degree $\leq s$ and complexity $\leq M$ is any function\footnote{Strictly speaking, the orbit is the tuple of data $(G, \Gamma, G/\Gamma, G_\bullet, n \mapsto g(n)\Gamma)$, rather than just the sequence $n \mapsto g(n) \Gamma$, but we shall abuse notation and use the sequence as a metonym for the whole orbit.} $n \mapsto g(n) \Gamma$ from $\Z^D \to G/\Gamma$, where $(G/\Gamma,G_\bullet)$ is a filtered nilmanifold of complexity $\leq M$, and $g \in \poly(\Z^D, G_\bullet)$ is a \textup{(}multidimensional\textup{)} polynomial sequence.  
\end{definition}

Using the concept of polynomial orbit, we can define the notion of a (polynomial) nilsequence, as well as a generalisation which we call a \emph{virtual} nilsequence, in analogy with virtually nilpotent groups (groups with a finite index nilpotent subgroup).

\begin{definition}[Nilsequences]
A \emph{\textup{(}multidimensional, polynomial\textup{)} nilsequence} of degree $\leq s$ and complexity $\leq M$ is any function $f: \Z^D \to \C$ of the form $f(n) = F(g(n)\Gamma)$, where $n \mapsto g(n)\Gamma$ is a polynomial orbit of degree $\leq s$ and complexity $\leq M$, and $F: G/\Gamma \to \C$ is a function of Lipschitz norm\footnote{The (inhomogeneous) Lipschitz norm $\|F\|_{\operatorname{Lip}}$ of a function $F: X \to \C$ on a metric space $X = (X,d)$ is defined as
$$\|F\|_{\operatorname{Lip}} := \sup_{x \in X} |F(x)| + \sup_{x,y \in X: x \neq y} \frac{|F(x)-F(y)|}{|x-y|}.$$
} at most $M$.
\end{definition}

\begin{definition}[Virtual nilsequences]
Let $N \geq 1$.  A \emph{virtual nilsequence} of degree $\leq s$ and complexity $\leq M$ at scale $N$ is any function $f: [N] \to \C$ of the form $f(n) = F(g(n)\Gamma, n \md{q}, n/N)$, where $1 \leq q \leq M$ is an integer, $n \mapsto g(n)\Gamma$ is a polynomial orbit of degree $\leq s$ and complexity $\leq M$, and $F: G/\Gamma \times \Z/q\Z \times \R \to \C$ is a function of Lipschitz norm at most $M$.  (Here we place a metric on $G/\Gamma \times \Z/q\Z \times \R$ in some arbitrary fashion, e.g. by embedding $\Z/q\Z$ in $\R/\Z$ and taking the direct sum of the metrics on the three factors.)
\end{definition}

One concept that featured in Theorem \ref{strong-reg} remains to be defined: that of an \emph{irrational} orbit. The definition is a little technical and takes some setting up, and so we defer it and the discussion of some motivating examples to Appendix \ref{appendix-a}. Very roughly speaking, an irrational orbit is one that is equidistributed and for which the filtration $G_{\bullet}$ is as small as possible.  

This concludes our attempt to discuss all the concepts involved in the arithmetic regularity lemma, Theorem \ref{strong-reg}; we turn now to a statement and discussion of the counting lemma.\vspace{11pt}

\textsc{Counting lemma.} In applications of the arithmetic regularity lemma, we will be interested in counting additive patterns such as arithmetic progressions or parallelepipeds.
To understand the phenomena properly it is advantageous to work in a somewhat general setting similar to that taken in \cite{gowers-wolf-1,gowers-wolf-2,gowers-wolf-3,green-tao-linearprimes}. In the latter paper one works with a system $\Psi = (\psi_1,\ldots,\psi_t)$ of integer-coefficient linear forms (or equivalently, group homomorphisms) $\psi_1,\ldots,\psi_t: \Z^D \to \Z$, and consider expressions such as
\begin{equation}\label{enz}
\E_{\mathbf{n} \in \Z^D \cap P} f( \psi_1(\mathbf{n}) ) \ldots f( \psi_t(\mathbf{n}) )
\end{equation}
where $P$ is a convex subset of $\R^D$. 

From now on, when we refer to a system $\Psi$ this is what we mean, and we will assume throughout the paper that none of the forms $\psi_j$ is identically zero.

Thus, for instance, if counting arithmetic progressions, one might use the linear forms
\begin{equation}\label{hp-lattice} 
\psi_i(n_1,n_2) := n_1 + (i-1) n_2; \quad i=1,\ldots,k
\end{equation}
whilst for counting parallelepipeds one might instead use the linear forms
\begin{equation}\label{hk-lattice} 
\psi_{\omega_1,\ldots,\omega_k}(n_0,n_1,\ldots,n_k) := n_0+\omega_1n_1+\ldots+\omega_kn_k; \quad \omega_1,\ldots,\omega_k \in \{0,1\}.\end{equation}

In order to understand the contribution to \eqref{enz} coming from the structured part $f_\nil$ of $f$, one is soon faced with the question of understanding the equidistribution of the orbit
\begin{equation}\label{v-orbit} (g(\psi_1(\mathbf{n})) \Gamma,\dots, g(\psi_t(\mathbf{n})) \Gamma)\end{equation} 
inside $(G/\Gamma)^t$, where $\mathbf{n} = (n_1,\ldots,n_D)$ ranges over $\Z^D \cap P$. We abbreviate this orbit as $g^\Psi({\mathbf n}) \Gamma^t$, where $g^\Psi: \Z^D \to G^t$ is the polynomial sequence
\begin{equation}\label{gpsi-def}
g^\Psi({\mathbf n}) := (g(\psi_1(\mathbf{n})),\dots, g(\psi_t(\mathbf{n}))).
\end{equation}
A very useful model for this question, in which \emph{infinite} orbits were considered in the ``linear'' case $g(n) = g^nx$, was studied by Leibman \cite{leibman-orb-diag}.  His work leads one to the following definition.

\begin{definition}[The Leibman group]\label{leib-gp}  Let $\Psi = (\psi_1,\ldots,\psi_t)$ be a collection of linear forms $\psi_1,\ldots,\psi_t: \Z^D \to \Z$.  For any $i \geq 1$, define $\Psi^{[i]}$ to be the linear subspace of $\R^k$ spanned by the $(\psi_1^i({\bf n}),\ldots,\psi_t^i({\bf n}))$, as ${\bf n}$ ranges over $\Z^D$. Given a filtered nilmanifold $(G/\Gamma, G_\bullet)$, we define the \emph{Leibman group} $G^{\Psi} \lhd G^t$ to be the Lie subgroup of $G^t$ generated by the elements $g_i^{\vec v_i}$ for $i \geq 1$, $g_i \in G_{(i)}$, and $\vec v_i \in {\Psi}^{[i]}$, with the convention that\footnote{We define $g^v$ for real $v$ by the formula $g^v := \exp( v \log(g) )$, where $\exp: {\mathfrak g} \to G$ is the usual exponential map from the Lie algebra ${\mathfrak g}$ to $G$ (this is a homeomorphism since $G$ is nilpotent, connected, and simply connected).}
$$ g^{(v_1,\ldots,v_t)} := (g^{v_1},\ldots,g^{v_t})$$ 
for each $g \in G$.  Note that $G^{\Psi}$ is normal in $G^t$ because $G_{(i)}$ is normal in $G$.
We will show in \S \ref{counting-sec} that $G^{\Psi}$ is also a rational subgroup of $G^t$, thus $\Gamma^{\Psi} := \Gamma^t \cap G^{\Psi}$ is a discrete cocompact subgroup of $G^{\Psi}$.
\end{definition}

We will only be able to work with this definition effectively when $\Psi$ has the \emph{flag property}, by which we mean that 
\begin{equation}\label{flag-property} \Psi^{[1]} \leq \Psi^{[2]} \leq \Psi^{[3]} \leq \cdots \end{equation} This condition is satisfied in many important cases, as follows.
\begin{enumerate}
\item \emph{Translation-invariant systems}, where $\Psi(\R^D)$ contains the vector $\vec{1} = (1,1,\dots, 1)$. These satisfy the flag condition by Corollary \ref{filtr} (ii). This includes both \eqref{hp-lattice} (arithmetic progressions) and \eqref{hk-lattice} (parallelepipeds).
\item Systems of \emph{Gowers-Wolf complexity $1$}, that is to say systems with $\Psi^{[2]} = \R^t$. These satisfy the flag condition by Corollary \ref{filtr} (iii). This includes all systems of Cauchy-Schwarz complexity 1; see Appendix \ref{complexity-app} for basic definitions and proofs concerning these concepts.
\item Certain other systems, for instance rooted parallelepipeds minus $0$, where $\Psi$ consists of the $2^k - 1$ forms $\psi_{\omega}(n_1,\ldots,n_k) := \omega_1n_1+\ldots+\omega_kn_k$ for $\omega \in \{0,1\}^k \setminus \{0\}$, where here $\omega = (\omega_1,\dots, \omega_k)$. The reason for this is that the linear relations between the powers $\psi_{\omega}^i$ are generated by alternating $\pm 1$ sums over $(i+1)$-dimensional faces in $\{0,1\}^k \setminus \{0\}$. Any $(i+2)$-dimensional face is a union of two $(i+1)$-dimensional faces, and so $\Psi^{[i]} \leq \Psi^{[i+1]}$.\vspace*{7pt}
\end{enumerate}

An example\footnote{There is nothing particularly special about this example, but it is the one shown to us by Daniel Altman in order to explain the error in the original version of our paper.}
 of a system $\Psi$ not satisfying the flag property is the following, where $D = 2$, $t = 4$ and ${\bf n} = (n_1, n_2)$:
\begin{equation}\label{non-trans} \psi_1(\mathbf{n}) = n_2, \; \psi_2(\mathbf{n}) = 2n_1 + 2n_2, \; \psi_3(\mathbf{n}) = n_1 + 3n_2, \; \psi_4(\mathbf{n}) = n_1.\end{equation}
Here, $\Psi^{[2]}$ is the hyperplane $\{ (x_1, x_2, x_3, x_4) \in \R^4: 24 x_1 + 3 x_2 - 4x_3 - 8 x_4 = 0\}$, but there are elements in $\Psi^{[1]}$, for instance $(1,2,3,0)$, which do not lie in this hyperplane.\vspace*{10pt}

\emph{Examples. } Let us look more closely at the Leibman group construction corresponding to the two systems \eqref{hp-lattice} and \eqref{hk-lattice} above. In the case of arithmetic progressions, where $\Psi$ is as in \eqref{hp-lattice}, the Leibman group $G^{\Psi}$ is sometimes referred to as the \emph{Hall-Petresco group} $\HP^k(G_{\bullet})$ and has the particularly simple alternative description
\[ \HP^k(G_{\bullet}) = G^{\Psi} = \{ (g(0),\dots, g(k-1)) : g \in \poly(G_{\bullet})\},\] 
We will prove this fact in \S \ref{counting-sec}. 
In the case of parallelepipeds, where $\Psi$ is as in \eqref{hk-lattice}, the Leibman group $G^{\Psi}$ has been referred to as the \emph{Host-Kra cube group} \cite{green-tao-linearprimes} and it too has an alternative description. See \cite[Appendix E]{green-tao-linearprimes} for more information: we will not be making use of this particular group here.

Let $g \in \poly(\Z, G_\bullet)$ be a polynomial sequence, and let $\Psi = (\psi_1,\ldots,\psi_t)$ be a system of linear forms $\psi_1,\ldots,\psi_t: \Z^d \to \Z$ satisfying the flag property.  It turns out (see Lemma \ref{motiv-lemma}) that the sequence $g^\Psi$ takes values in $G^\Psi$.  More remarkably,  the orbit \eqref{v-orbit} is in fact \emph{totally equidistributed} on $G^{\Psi}/\Gamma^{\Psi}$ if $g$ is sufficiently irrational. It is this result that we refer to as our counting lemma.

\begin{theorem}[Counting lemma]\label{count-lem}  Let $M,D,t,s$ be integers with $1 \leq D, t$, $s \leq M$, let $(G/\Gamma,G_\bullet)$ be a degree $\leq s$ filtered nilmanifold of complexity $\leq M$, let $g: \Z \to G$ be an $(A,N)$-irrational polynomial sequence adapted to $G_\bullet$, let $\Psi = (\psi_1,\ldots,\psi_t)$ be a system of linear forms $\psi_1,\ldots,\psi_t: \Z^D \to \Z$ satisfying the flag property \eqref{flag-property}, with coefficients of magnitude at most $M$, and let $P$ be a convex subset of $[-N,N]^{D}$.  Then for any Lipschitz function $F: (G/\Gamma)^t \to \C$ of Lipschitz norm at most $M$, one has\footnote{We use $o_{A \to \infty;M}(X)$ to denote a quantity bounded in magnitude by $c_M(A) X$, where $c_M(A) \to 0$ as $A \to \infty$ for fixed $M$.  Similarly for other choices of subscripts.} 
\begin{align*}
 \sum_{\mathbf{n} \in \Z^D \cap P} F(g^\Psi(\mathbf{n})\Gamma^t) 
 &= \operatorname{vol}(P) \int_{g(0)^\Delta G^{\Psi}/\Gamma^{\Psi}} F \\
 &\quad + o_{A \to \infty;M}( N^D ) + o_{N \to \infty; M}(N^D),
\end{align*}
where $g(0)^\Delta := (g(0),\ldots,g(0)) \in G^t$ and the integral is with respect to the probability Haar measure on the coset \[ g(0)^\Delta G^{\Psi}/\Gamma^{\Psi},\] viewed as a subnilmanifold of $(G/\Gamma)^t$, and $\operatorname{vol}(P)$ is the Lebesgue measure of $P$ in $\R^D$.

More generally, whenever $\Lambda \leq \Z^D$ is a sublattice of index $[\Z^D:\Lambda] \leq M$, and $\mathbf{n_0} \in \Z^D$, one has
\begin{align*}
\sum_{\mathbf{n} \in (\mathbf{n_0}+\Lambda) \cap P} F(g^\Psi(\mathbf{n})\Gamma^t) &= \frac{\operatorname{vol}(P) }{[\Z^D:\Lambda]} \int_{g(0)^\Delta G^{\Psi}/\Gamma^{\Psi}} F\\
&\quad  + o_{A \to \infty;M}( N^D ) + o_{N \to \infty; M}(N^D).
\end{align*}
\end{theorem}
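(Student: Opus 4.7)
The strategy is to view $g^\Psi$ as a polynomial orbit on a sub-nilmanifold of $(G/\Gamma)^t$ and apply the quantitative equidistribution machinery of \cite{green-tao-nilratner} there. The preliminary structural steps are: (a) to verify that $G^\Psi$ is rational in $G^t$, so that $\Gamma^\Psi = \Gamma^t \cap G^\Psi$ is cocompact and $G^\Psi/\Gamma^\Psi$ is a genuine nilmanifold of complexity $O_M(1)$; (b) to prove the claim flagged before the theorem (Lemma \ref{motiv-lemma}) that $g^\Psi({\bf n}) \in g(0)^\Delta G^\Psi$ for all ${\bf n} \in \Z^D$; and (c) to equip $G^\Psi$ with the filtration $(G^\Psi)_{(i)} := G^\Psi \cap G_{(i)}^t$, which is of degree $\leq s$, and to check that $\tilde g({\bf n}) := g(0)^{-\Delta} g^\Psi({\bf n})$ is a polynomial sequence adapted to it. Step (a) can be shown by producing an explicit generating set of rational elements; steps (b)--(c) follow by Taylor-expanding $g$ in a Mal'cev basis as $g(n) = \prod_i g_i^{\binom{n}{i}}$ with $g_i \in G_{(i)}$, substituting $\psi_j({\bf n})$, and reading off the binomial coefficients in the $\psi_j({\bf n})^k$ against Definition \ref{leib-gp}.

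Once $\tilde g$ is realised as a polynomial sequence on $(G^\Psi/\Gamma^\Psi, (G^\Psi)_\bullet)$, the main statement reduces to the multidimensional convex-body version of the quantitative Leibman equidistribution theorem: if $\tilde g$ is sufficiently irrational on this sub-nilmanifold, then
\[ \sum_{{\bf n} \in \Z^D \cap P} F(\tilde g({\bf n}) \Gamma^\Psi) = \operatorname{vol}(P) \int_{G^\Psi/\Gamma^\Psi} F + o_{A \to \infty; M}(N^D) + o_{N \to \infty; M}(N^D). \]
Translating by $g(0)^\Delta$ yields the displayed identity. The extension of the one-dimensional quantitative equidistribution theorem of \cite{green-tao-nilratner} to convex subsets of $[-N,N]^D$ is routine: one exhausts $P$ by product boxes using the bounded complexity of $\Psi$, applies the one-dimensional theorem in each coordinate direction, and controls the boundary contribution via $\operatorname{vol}(P) = O(N^D)$.

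The technical core, and the step I would expect to be hardest, is the \emph{irrationality transfer}: $(A,N)$-irrationality of $g$ with respect to $(G/\Gamma, G_\bullet)$ must imply $(A',N)$-irrationality of $\tilde g$ with respect to $(G^\Psi/\Gamma^\Psi, (G^\Psi)_\bullet)$ for some $A' = A'(A,M)$ with $A' \to \infty$ as $A \to \infty$. The construction of $G^\Psi$ from the subspaces $\Psi^{[i]}$ is engineered precisely for this: the $i$-th degree coefficient of $\tilde g$ modulo $(G^\Psi)_{(i+1)}$ is a product of factors $g_i^{\vec v}$ with $\vec v \in \Psi^{[i]}$, which by definition generate $(G^\Psi)_{(i)}$ modulo $(G^\Psi)_{(i+1)}$. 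A non-trivial horizontal character $\eta$ on $(G^\Psi)_{(i)}/(G^\Psi)_{(i+1)}$ of bounded complexity therefore pulls back, via any generator it does not kill, to a non-trivial horizontal character $\eta'$ on $G_{(i)}/G_{(i+1)}$ of comparable complexity; the $(A,N)$-irrationality of $g$ then prohibits $\eta \circ \tilde g$ from being close to a low-complexity rational polynomial, which is exactly $(A',N)$-irrationality of $\tilde g$. Keeping careful track of how the complexity bounds degrade under the pullback is the only delicate quantitative point.

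For the sublattice version, write ${\bf n} = {\bf n_0} + B{\bf m}$ where $B \in M_D(\Z)$ has image $\Lambda$ and $|\det B| = [\Z^D:\Lambda] \leq M$. The composition ${\bf m} \mapsto g^\Psi({\bf n_0} + B{\bf m})$ is again a polynomial sequence whose Leibman group is unchanged (the subspaces $\Psi^{[i]}$ are invariant under pre-composition with an affine isomorphism of $\Q^D$) and whose irrationality parameters are preserved up to constants depending on $M$. Applying the first part and rescaling the volume by $|\det B|^{-1} = 1/[\Z^D:\Lambda]$ yields the second displayed identity.
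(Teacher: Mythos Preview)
Your overall strategy matches the paper's: set up $G^\Psi/\Gamma^\Psi$ as a filtered nilmanifold, apply the multiparameter equidistribution criterion (Theorem~\ref{mpet}) to $g^\Psi$ there, and transfer the resulting character back to $G$ to contradict $(A,N)$-irrationality of $g$. The structural steps (a)--(c) and the sublattice reduction are handled as you describe.

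There is, however, a genuine gap in your irrationality transfer. You write that a non-trivial character $\eta$ on $(G^\Psi)_{(i)}/(G^\Psi)_{(i+1)}$ ``pulls back, via any generator it does not kill, to a non-trivial horizontal character $\eta'$ on $G_{(i)}/G_{(i+1)}$'', and that $(A,N)$-irrationality of $g$ then blocks this. But irrationality (Definition~\ref{irrat-def}) is tested against \emph{$i$-horizontal characters}, which must additionally annihilate $[G_{(j)}, G_{(i-j)}]$ for every $0 \leq j \leq i$. A generic pullback $g \mapsto \eta(g^{\vec v})$ with $\vec v \in \Psi^{[i]}$ need not have this property. The paper resolves this by pulling back along the specific monomial vectors $\vec v = \Psi({\bf e}_1)^{i_1}\cdots\Psi({\bf e}_D)^{i_D}$ with $i_1+\cdots+i_D=i$: these factor as $\vec v = w\cdot w'$ with $w \in \Psi^{[j]}$, $w' \in \Psi^{[i-j]}$ for any $j$, so by Baker--Campbell--Hausdorff $[g,g']^{\vec v} \equiv [g^w, g'^{w'}]$ modulo $G^\Psi_{(i+1)}$, and $\eta$ kills this because it vanishes on $[G^\Psi,G^\Psi]$. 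That these monomials span $\Psi^{[i]}$ (so at least one pullback is non-trivial) requires the depolarisation identity. Your phrase ``via any generator'' and your identification of the only delicacy as ``how the complexity bounds degrade'' suggest you have not seen this point; it is the algebraic heart of the argument, not a bookkeeping issue.

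A smaller remark: you frame the transfer as establishing $(A',N)$-irrationality of the multidimensional sequence $\tilde g$, but Definition~\ref{irrat-def} is stated only for one-parameter sequences. The paper sidesteps this by arguing directly by contradiction from Theorem~\ref{mpet} rather than passing through an intermediate irrationality notion for $\tilde g$.
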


The counting lemma is, of course, best understood by seeing it in action as we shall do several times later on.  The errors $o_{A \to \infty;M}(N^D)$ and $o_{N \to \infty;M}(N^D)$ are negligible in most applications, as $A$ will typically be a huge function $\Grow(M)$ of $M$, and $N$ can also be taken to be arbitrarily large compared to $M$.

We remark that one could easily extend the above lemma to control averages of virtual irrational nilsequences, rather than just irrational sequences, by introducing some additional integrations over the local factors $\Z/q\Z$ and $\R$, but this would require even more notation than is currently being used and so we do not describe such an extension here.

Before turning to applications, let us explain why Theorem \ref{count-lem} can fail when the flag property \eqref{flag-property} does not hold, even in rather simple examples. Take $G = \R^2$, with the filtration $G_{\bullet}$ given by $G_{(0)} = G_{(1)} = \R^2$, $G_{(2)} = \{0\} \times \R$ and $G_{(3)} = G_{(4)} = \cdots = \{(0,0)\}$. Set $\Gamma = \Z^2$, and let $\Psi$ be the collection of forms in \eqref{non-trans}. Let $\alpha, \beta,\gamma$ be irrational numbers, with no low height linear relation between them and $1$. Consider the sequences $g, \tilde g : \Z \rightarrow G$ defined by
\[ g(n) = (\alpha n, \beta n^2), \quad \tilde g(n) = (\alpha n ,\beta n^2 + \theta n).\] Then both sequences are adapted to $G_{\bullet}$ and are $(A, N)$-irrational with $A \rightarrow \infty$ as $N \rightarrow \infty$ (we leave it as an exercise to check this using the definition, Definition \ref{irrat-def}). However, the sequences $g^{\Psi}(\mathbf{n}), \tilde g^{\Psi}(\mathbf{n})$ have significantly different distribution properties in $G^4/\Gamma^4$ as $\mathbf{n}$ ranges over $[N]^2$. 

The former sequence $(g^{\Psi}(\mathbf{n}))_{\mathbf{n} \in [N]^2}$ takes values in the $5$-dimensional torus consisting of points $((x_j, y_j))_{j = 1}^4 \in G^4$ satisfying the relations
\[  x_4 - x_3 = 3x_1, \quad x_2 - 2x_1 = 2x_4, \quad 24 y_1 + 3y_2 - 4y_3 - 8y_4 = 0,\] modulo $\Gamma^4$. This is due to the relation
\[ 24 \psi_1(\mathbf{n})^2 + 3 \psi_2(\mathbf{n})^2 - 4 \psi_3(\mathbf{n})^2 - 8 \psi_4(\mathbf{n})^2 = 0.\]
However, the latter sequence $(\tilde g^{\Psi}(\mathbf{n}))_{\mathbf{n} \in [N]^2}$ does not take values in this space because
\[ 24 \psi_1(\mathbf{n}) + 3 \psi_2(\mathbf{n}) - 4 \psi_3(\mathbf{n}) - 8 \psi_4(\mathbf{n}) \neq 0\] in general. In fact, one may verify using standard results about distribution of polynomial sequences in tori that $(\tilde g^{\Psi}(\mathbf{n}))_{\mathbf{n} \in [N]^2}$ becomes, as $N \rightarrow \infty$, equidistributed in the $6$-dimensional torus
\[  \{ ((x_j, y_j))_{j = 1}^4 \in G^4 : x_4 - x_3 = 3x_1, x_2 - 2x_1 = 2x_4\}/\Gamma^4.\] 
This example shows that any variant of Theorem \ref{count-lem} applying to non translation-invariant systems $\Psi$ would have to take account of more subtle information about the sequence $g : \Z \rightarrow G$ than irrationality. This would (if it is possible) require a wholesale reworking of the proof. \vspace{11pt}

\textsc{Applications.} The proofs of the regularity and counting lemmas occupy about half the paper. In the remaining half, we give a number of applications of these results to problems in additive combinatorics.  The scheme of the arguments in all of these cases is similar.  First, one applies the arithmetic regularity lemma to decompose the relevant function $f$ into structured, small, and (very) uniform components $f = f_\nil + f_\sml + f_\unf$. Very roughly speaking, these are analysed as follows:

$f_\nil$ is studied using algebraic properties of nilsequences, particularly the counting lemma;

$f_\sml$ is shown to be negligible, though often (unfortunately) some additional algebraic input is required to ensure that this error does not conspire to destroy the contribution from $f_\nil$;

$f_\unf$ is easily shown to be negligible using results of ``generalised von Neumann'' type as discussed in \S \ref{gvn-sec}.

As we shall see, dealing with the error $f_\sml$ can cause a certain amount of pain. To show that this error is truly negligible, one often has to prove that patterns guaranteed by $f_\nil$ (such as arithmetic progressions) do not concentrate on some small set which might be contained in the support of $f_\sml$.

We now give specific examples of this paradigm.  In \S \ref{szem-sec} we give a ``new'' proof of Szemer\'edi's famous theorem on arithmetic progressions. This is hardly exciting nowadays, with at least 14 proofs already in the literature \cite{austin-multdimsz, austin-dhj,furstenberg-book,fko, gowers-longaps,gowers-hypergraph,nagle-rodl-schacht,polymath,szemeredi-aps,tao-quant-ergodic,tao-hypergraph} as well as (slightly implicitly) in \cite{bergelson-host-kra,host-kra,ziegler}. However this proof makes the point that for a certain class of problems it suffices to ``check the result for nilsequences'', and in so doing one really sees the structure of the problem. Just as random and structured graphs are two obvious classes to test conjectures against in graph theory, we would like to raise awareness of nilsequences as potential (and, in certain cases such as this one, the \emph{only}) sources of counterexamples.

The second application, proven in \S \ref{bhk-sec}, is to establish a conjecture of Bergelson, Host and Kra \cite{bergelson-host-kra}. Here and in the sequel we use the notation $X \ll_{\alpha,\eps} Y$ or $Y \gg_{\alpha,\eps} X$ synonymously with $X = O_{\alpha,\eps}(Y)$, and similarly for other choice of subscripts.

\begin{theorem}[Bergelson-Host-Kra conjecture]\label{bhk-thm}  Let $k = 1, 2, 3$ or $4$, and suppose that $0 < \alpha < 1$ and $\eps > 0$.  Then for any $N \geq 1$ and any subset $A \subseteq [N]$ of density $|A| \geq \alpha N$, one can find $\gg_{\alpha,\eps} N$ values of $d \in [-N,N]$ such that there are at least $(\alpha^k - \eps)N$ $k$-term arithmetic progressions in $A$ with common difference $d$.
\end{theorem}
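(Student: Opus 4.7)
The plan is to apply the arithmetic regularity lemma (Theorem \ref{strong-reg}) to $f = \1_A$ with degree parameter $s = k - 2$ (so that the $U^{s+1} = U^{k-1}$ norm is the one controlling $k$-AP counts via generalised von Neumann), small parameter $\eps' > 0$, and rapidly growing function $\Grow$, both to be chosen in terms of $\alpha$ and $\eps$. The cases $k = 1, 2$ are essentially trivial (by density and by a Fourier/Plancherel computation respectively), so I focus on $k \in \{3, 4\}$. This yields a decomposition $\1_A = f_\nil + f_\sml + f_\unf$ in which $f_\nil : [N] \to [0, 1]$ is a $(\Grow(M), N)$-irrational virtual nilsequence of complexity $\leq M = O_{s, \eps', \Grow}(1)$ and degree $\leq s$, with $\|f_\sml\|_{L^2} \leq \eps'$ and $\|f_\unf\|_{U^{k-1}} \leq 1/\Grow(M)$.

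Let $T(d) := \E_n \prod_{j=0}^{k-1} \1_A(n+jd)$ denote the normalised count of $k$-APs in $A$ with common difference $d$, and let $T_\nil(d)$ denote the same quantity with $f_\nil$ in place of $\1_A$. Expanding $T(d) - T_\nil(d)$ produces $3^k - 1$ terms each involving at least one $f_\sml$ or $f_\unf$. Terms containing an $f_\unf$ contribute $\E_d|\cdot| \ll_k 1/\Grow(M)$ by generalised von Neumann (\S \ref{gvn-sec}), while terms involving only $f_\nil$ and $f_\sml$ with at least one $f_\sml$ contribute $\E_d|\cdot| \ll_k \eps'$ by Cauchy--Schwarz against the $L^2$ norm of $f_\sml$. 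Choosing $\eps'$ small and $\Grow$ large, Markov's inequality then gives $|T(d) - T_\nil(d)| < \eps/4$ for all but an $\eps/10$-fraction of $d \in [-N, N]$.

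The heart of the argument is to exhibit a set $\D \subseteq [-N, N]$ of density $\gg_{\alpha, \eps} 1$ on which $T_\nil(d) \geq \alpha^k - \eps/2$. Write $f_\nil(n) = F(g(n) \Gamma, n \mod q, n/N)$. For each fixed $d$, the sequence $n \mapsto (g(n), g(n+d), \ldots, g(n+(k-1)d))$ is a polynomial orbit in the Leibman group $G^\Psi = \HP^k(G_\bullet) \leq G^k$ for the AP forms, by the Lazard--Leibman theorem (Theorem \ref{ll-thm}). I then apply the counting lemma (Theorem \ref{count-lem}) in the two-parameter form with $D = 2$, forms $\psi_j(n, m) = n + (j-1)m$, and convex body $P = [1, N] \times [-N, N]$, and convert the resulting average over $m$ into pointwise-in-$d$ information by combining the sublattice refinement of the counting lemma with a Markov argument on the $o(N^2)$ errors. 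This identifies $T_\nil(d)$, up to a further error negligible off a small set of $d$, as an integral of $F^{\otimes k}$ (together with the corresponding averaged quantities on $\Z/q\Z$ and $[0, 1]$) over a $d$-dependent coset of $G^\Psi/\Gamma^\Psi$. The set $\D$ is then defined to consist of those ``return-time'' $d$ with $q$ dividing $d$, $|d|/N$ smaller than some $\delta = \delta(M, \eps)$, and the associated coset lying in a small neighborhood of the diagonal. For $d \in \D$, the coset integral collapses to $\int F^k$, and Jensen's inequality (applicable because $F \geq 0$) gives $\int F^k \geq (\int F)^k$; since $\int F = \E_n f_\nil(n) = \alpha - O(\eps' + 1/\Grow(M))$, this is $\geq \alpha^k - \eps/4$ for suitable parameter choices.

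The principal obstacle is showing that $\D$ has density $\gg_{\alpha, \eps} 1$ in $[-N, N]$. For $k = 3$ ($s = 1$) the Leibman group $\HP^3(G_\bullet)$ is abelian and the $d$-dependent coset moves as a linear orbit on a torus, so Weyl equidistribution yields the density bound immediately. For $k = 4$ ($s = 2$) the Leibman group $\HP^4(G_\bullet)$ is genuinely non-abelian, and one must instead apply Leibman's quantitative equidistribution theorem to the ``derivative'' polynomial sequence $d \mapsto $ (class of the $d$-shifted orbit) in the nilmanifold $G^\Psi / \Gamma^\Psi$ to show that it returns to any neighborhood of the diagonal with positive frequency. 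A secondary technicality is to propagate the irrationality of $g$ to the shifted sequences for most $d$, so that the counting lemma applies uniformly on $\D$; this is handled by excluding a bounded-complexity set of resonant $d$ whose removal does not spoil the density of $\D$.
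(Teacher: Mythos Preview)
Your proposal has two genuine gaps, one of which is fatal for $k=4$.

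\textbf{The main gap.} Your claim that for $d$ in a positive-density set $\D$ the ``coset integral collapses to $\int_{G/\Gamma} F^k$'' --- equivalently, that $T_\nil(d) \approx \E_n f_\nil(n)^k$ --- is false for $k=4$. Degree-$2$ nilsequences are not almost periodic: take for instance $f_\nil(n) = \tfrac12(1+\cos(2\pi\alpha n^2))$ with $\alpha$ irrational. Then $f_\nil(n+d)-f_\nil(n)$ involves $\cos(2\pi\alpha(2nd+d^2))$, and for every $d\neq 0$ the phase $2\alpha nd$ equidistributes mod $1$ as $n$ ranges over $[N]$; there is no $d$ for which $f_\nil(n+jd)\approx f_\nil(n)$ holds for all (or even most) $n$. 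So the set $\D$ you describe either has density zero or does not have the property you assert. Your appeal to ``Leibman's equidistribution theorem applied to the derivative sequence $d\mapsto(\text{class of the }d\text{-shifted orbit})$'' does not rescue this: the object in question is a coset (a sub-nilmanifold), not a point, and there is no recurrence statement that makes it return to the diagonal sub-nilmanifold. The paper's argument for $k=4$ is fundamentally different: one constrains only the \emph{horizontal} part of the orbit via a weight $\mu(d)$, applies the counting lemma to the resulting $(n,d)$-average, and then uses a Fourier positivity on the vertical torus --- the constraint in $\HP^4(G_\bullet)$ reads $g_{2,0}g_{2,1}^{-3}g_{2,2}^3g_{2,3}^{-1}=\id$, and Fourier inversion expresses the relevant integral as $\sum_\xi |\hat F(g_0,\xi)|^2|\hat F(g_0,3\xi)|^2 \geq |\hat F(g_0,0)|^4$. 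It is precisely this positivity (and its failure for $k\geq 5$, witnessed by Ruzsa's example) that drives the result.

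\textbf{A secondary gap.} Your treatment of the $f_\unf$ terms is also unjustified with $s=k-2$. Generalised von Neumann bounds $|\E_{n,d}\,\cdot\,|$, not $\E_d|\E_n\,\cdot\,|$; to control the latter one squares and obtains a $2k$-form system in $(n,n',d)$, which for $k=4$ has Cauchy--Schwarz complexity $3$, requiring $U^4$ rather than $U^3$ control. (The paper notes this explicitly in the remark closing \S\ref{bhk-sec}.) The paper avoids the issue by working with a weighted double average $\E_{n,d}\mu(d)\prod_j f(n+jd)$ with $\mu$ a degree $\leq k-2$ nilsequence, and invoking the twisted generalised von Neumann inequality (Lemma~\ref{gvn-twist}) to kill $f_\unf$ directly, without ever needing pointwise-in-$d$ control.
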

\emph{Remarks.} The claim is trivial for $k=1$, and follows from an easy averaging argument when $k=2$.  This theorem was established in the case $k = 3$ by the first author in \cite{green-regularity}: we give a new proof of this result which may be of independent interest.  The case $k=4$ is new, although a finite field analogue of this result previously appeared in lecture notes of the first author \cite{green-montreal} (reporting on joint work).  A counterexample example of Ruzsa in the appendix to \cite{bergelson-host-kra} shows that Theorem \ref{bhk-thm} fails when $k \geq 5$.

Finally, in \S \ref{gowers-wolf-sec}, we establish a generalisation of a recent result of Gowers and Wolf \cite{gowers-wolf-1,gowers-wolf-2,gowers-wolf-3} regarding the ``true'' complexity of a system of linear forms.

\begin{theorem}\label{gwolf}  Let $\Psi = (\psi_1,\ldots,\psi_t)$ be a collection of linear forms from $\Z^D$ to $\Z$ satisfying the flag condition, and let $s \geq 1$ be an integer such that the polynomials $\psi_1^{s+1},\ldots,\psi_t^{s+1}$ are linearly independent.  Then for any function $f: [N] \to \C$ bounded in magnitude by $1$ \textup{(}and defined to be zero outside of $[N]$\textup{)} obeying the bound $\|f\|_{U^{s+1}[N]} \leq \delta$ for some $\delta > 0$, one has
$$ \E_{\mathbf{n} \in [N]^D} \prod_{i=1}^t f( \psi_i(\mathbf{n}) ) = o_{\delta \to 0; s, D,t,\Psi}(1).$$
\end{theorem}

\emph{Remarks.}  This result was conjectured in \cite{gowers-wolf-1} (without the flag property assumption), where it was shown that the linear independence hypothesis was necessary.  The programme in \cite{gowers-wolf-1,gowers-wolf-2,gowers-wolf-3} gives an alternate approach to this result that avoids explicit mention of nilsequences, and in particular establishes the counterpart to Theorem \ref{gwolf} in finite characteristic; their work also gives a proof of this theorem in the case when the Cauchy-Schwarz complexity of the system (see Theorem \ref{cs-lemma}) is at most two\footnote{Note that, although not all systems of Cauchy-Schwarz complexity two have the flag property, the results of Gowers and Wolf are only nontrivial when the Gowers-Wolf complexity of the system is one, that is to say $\Psi^{[2]} = \R^t$. In this case the flag property does hold. Therefore our result includes the result of Gowers and Wolf, albeit with worse bounds.}, with better bounds than our result, which is all but ineffective.  It is worth mentioning that the arguments in \cite{gowers-wolf-1,gowers-wolf-2,gowers-wolf-3} also develop several structural decomposition theorems along the lines of Theorem \ref{strong-reg}, but using the language of locally polynomial phases rather than nilsequences. 

The general case of \cite[Conjecture 2.5]{gowers-wolf-1}, in which the system $\Psi$ does not satisfy the flag property, remains open. \vspace{11pt}

\textsc{Relation to previous work.} A result closely related to Theorem \ref{strong-reg} in the case $s = 1$ was proved by Bourgain as long ago as 1989 \cite{bourgain}. In that paper, the decomposition was applied to give a different proof of \emph{Roth's theorem}, that is to say Szemer\'edi's theorem for $3$-term progressions. A different take on this result was supplied by the first author in \cite{green-regularity}, where the application to the case $k = 3$ of the Bergelson-Host-Kra conjecture was noted. In that same paper a construction of Gowers \cite{gowers-lower} was modified to show that any application of the arithmetic regularity lemma must lead to awful (tower-type) bounds; the same kind of construction would show that the cases $s \geq 2$ of Theorem \ref{strong-reg} lead to tower-type bounds as well. In\footnote{The relevant part of these lecture notes by the first author reported on joint work of the two of us.} \cite{green-montreal} the analogue of the case $s = 2$ of Theorem \ref{strong-reg} in a finite field setting was stated, proved, and used to deduce the finite field analogue of the Bergelson-Host-Kra conjecture in the case $k = 4$. In that same paper the present work was promised (as reference [22]) at ``some future juncture''. Four years later we have reached that juncture and we apologise for the delay. We note, however, that until the very recent resolution of the inverse conjectures for the Gowers norms \cite{green-tao-ziegler-u4inverse,uk-inverse} many of our results would have been conditional; furthermore, we are heavily dependent on our work \cite{green-tao-nilratner}, which had not been envisaged when the earlier promise was made.

In the meantime a greater general understanding of decomposition theorems of this type has developed through the work of Gowers \cite{gowers-regularity}, Reingold-Trevisan-Tulsiani-Vadhan \cite{rttv}, and Gowers-Wolf \cite{gowers-wolf-1,gowers-wolf-2,gowers-wolf-3}; see also the survey \cite{tao-focs} of the second author. While Theorem \ref{strong-reg} is related to several of these general decomposition theorems, it also relies upon specific structure of nilmanifolds. In any case it seems appropriate, in this volume, to give a proof using the ``energy increment argument'' pioneered by Szemer\'edi.

\textsc{Acknowledgments.}  BG was, while this work was being carried out, a fellow at the Radcliffe Institute at Harvard. He is very happy to thank the Institute for proving excellent working conditions.
TT is supported by a grant from the MacArthur Foundation, by NSF grant DMS-0649473, and by the NSF Waterman award.

\section{Proof of the arithmetic regularity lemma}\label{regularity-sec}

We now prove Theorem \ref{strong-reg}.  The proof proceeds in two main stages.  Firstly, we establish a ``non-irrational regularity lemma'', which establishes a weaker version of Theorem \ref{strong-reg} in which the structured component $f_\nil$ is a polynomial nilsequence, but one which is not assumed to be irrational.  The main tool here is the \emph{inverse conjecture $\GI(s)$ for the Gowers norms} \cite{uk-inverse}, combined with the energy incrementation argument that appears in proofs of the graph regularity lemma.  In the second stage, we upgrade this weaker regularity lemma to the full regularity lemma by converting the nilsequence to a irrational nilsequence.  The main tool here is a dimension reduction argument and a factorisation of nilsequences similar to that appearing in \cite{green-tao-nilratner}.

\textsc{The non-irrational regularity lemma.} We begin the first stage of the argument.  As mentioned above, the key ingredient is the following result.  

\begin{theorem}[$\GI(s)$]\label{gis-conj}
Let $s \geq 1$, and suppose that $f : [N] \rightarrow \C$ is a function bounded in magnitude by $1$ such that $\Vert f \Vert_{U^{s+1}[N]} \geq \delta$ for some $\delta>0$. Then there is a degree $\leq s$ polynomial nilsequence $\psi: \Z \to \C$ of complexity $O_{s,\delta}(1)$ such that $|\langle f, \psi \rangle_{L^2[N]}| \gg_{s,\delta} 1$, where 
$$ \langle f, \psi\rangle_{L^2[N]} := \E_{n \in [N]} f(n) \overline{\psi(n)}$$
is the usual inner product.
\end{theorem}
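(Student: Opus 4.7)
The plan is strong induction on the degree $s$. For the base case $s=1$, I reduce to classical Fourier analysis: extending $f$ to $\tilde f : \Z/\tilde N\Z \to \C$, the Parseval identity gives $\|\tilde f\|_{U^2(\Z/\tilde N\Z)}^4 = \sum_\xi |\widehat{\tilde f}(\xi)|^4$, so the hypothesis $\|f\|_{U^2[N]} \geq \delta$ forces some frequency $\xi_0$ to satisfy $|\widehat{\tilde f}(\xi_0)| \gg_\delta 1$. The degree $1$ polynomial nilsequence $n \mapsto e(\xi_0 n/\tilde N)$ on the circle $\R/\Z$ then provides the required correlation.

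For the inductive step, assuming the theorem for degree $\leq s-1$, I start from $\|f\|_{U^{s+1}[N]} \geq \delta$ and use the derivative identity $\|f\|_{U^{s+1}}^{2^{s+1}} = \E_h \|\Delta_h f\|_{U^s}^{2^s}$ (modified suitably for the cutoff to $[N]$). By Markov, $\|\Delta_h f\|_{U^s[N]} \gg_\delta 1$ for at least $\gg_\delta N$ values of $h$; applying the inductive hypothesis to each such $\Delta_h f$ produces a polynomial nilsequence $\psi_h(n) = F_h(g_h(n)\Gamma_h)$ of degree $\leq s-1$ and complexity $O_{s,\delta}(1)$ correlating with $\Delta_h f$. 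Since the complexity is uniformly bounded, only boundedly many filtered nilmanifolds (and, up to small $L^\infty$ error, finitely many Lipschitz functions $F$) can arise, so after pigeonholing I may assume the pair $(G/\Gamma, F)$ is independent of $h$, with only the polynomial sequence $g_h \in \poly(\Z, G_\bullet)$ varying.

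The real work lies in combining the family $\{g_h\}$ into a single degree $\leq s$ polynomial orbit correlating with $f$ itself. A further Cauchy--Schwarz over a second shift $h'$ extracts a ``symbol'' $\eta(h)$, namely the projection of the top-degree coefficient of $g_h$ to an appropriate horizontal torus character, defined on a positive-density set of $h$. Using the quantitative equidistribution theory for polynomial orbits on nilmanifolds developed in \cite{green-tao-nilratner}, such correlations are only possible when $\eta$ satisfies an approximate polynomial (ultimately affine-linear) identity in $h$; an additive-combinatorial linearization step in the style of Balog--Szemer\'edi--Gowers and Freiman then upgrades this to a genuine linear dependence on a Bohr-type set of $h$'s. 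Assembling these linear symbols into a central $\R$-extension of $G/\Gamma$ produces the desired degree $\leq s$ polynomial nilsequence correlating with $f$.

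The principal obstacle is exactly this symmetrization/linearization step: the $\psi_h$ are only determined up to lower-order and rational ambiguities, so showing that the $h$-dependent symbols patch together consistently requires the full machinery of Mal'cev coordinates, rational filtrations, and Ratner-style factorization of polynomial orbits from \cite{green-tao-nilratner}. This is the substantial new content supplied by the Green--Tao--Ziegler proof of $\GI(s)$, and it is not something I would attempt to redo from scratch --- rather, I would simply invoke it as a black box at this stage.
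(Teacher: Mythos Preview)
The paper does not prove this theorem at all; it is stated as a known result, with the remark following the statement attributing the $s=1$ case to classical harmonic analysis, $s=2$ to \cite{green-tao-u3inverse}, $s=3$ to \cite{green-tao-ziegler-u4inverse}, and the general case to the (then forthcoming) \cite{gtz-announcement}. Your proposal is a fair high-level sketch of the inductive strategy underlying the Green--Tao--Ziegler argument, and the base case $s=1$ is handled correctly. But your final paragraph concedes the main point: the symmetrization/linearization step that promotes the family $\{g_h\}_h$ of degree $\leq s-1$ nilsequences to a single degree $\leq s$ nilsequence correlating with $f$ \emph{is} the substantive content of $\GI(s)$, so invoking it ``as a black box'' is effectively citing the theorem you set out to prove. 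In short, both you and the paper treat $\GI(s)$ as an external input; the paper just does so without the preamble.
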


\emph{Remark.} The difficulty of this conjecture increases with $s$.  The case $s=1$ easily follows from classical harmonic analysis.  The case $s=2$ was established by the authors in \cite{green-tao-u3inverse}, building upon the breakthrough paper of Gowers \cite{gowers-4aps}.  The case $s=3$ was recently established by the authors and Ziegler in \cite{green-tao-ziegler-u4inverse}, and the general case will appear in the forthcoming paper \cite{uk-inverse} by the authors and Ziegler.

For technical reasons, it is convenient to replace the notion of a degree $\leq s$ polynomial nilsequence by a slightly different concept.  The following definition is not required beyond the end of the proof of Proposition \ref{semi-reg}.

\begin{definition}[$s$-measurability]\label{s-meas-def}  Let $\Phi: \R^+ \to \R^+$ be a growth function and $s \geq 1$.  A subset $E \subseteq [N]$ is said to be \emph{$s$-measurable} with growth function $\Phi$ if for every $M \geq 1$, there exists a degree $\leq s$ polynomial nilsequence $\psi: \Z \to [0,1]$ of complexity $\leq \Phi(M)$ such that
$$ \| \psi - 1_E \|_{L^2[N]} \leq 1/M.$$
\end{definition}

An example of a $1$-measurable function would be a regular Bohr set, as introduced in \cite{bourgain-triples} and discussed further in \cite[\S 2]{green-tao-u3inverse}. We will not need Bohr sets elsewhere in this paper, so we shall not dwell any longer on this example. However the reader will see ideas related to the basic theory of those sets in the proof of Corollary \ref{corda} below.

We make the simple but crucial observation that if $E, F$ are $s$-measurable with some growth functions $\Phi, \Phi'$ respectively, then boolean combinations of $E, F$ such as $E \cap F$, $E \cup F$, or $[N] \backslash E$ are also $s$-measurable with some growth function depending on $\Phi,\Phi'$. Underlying this, of course, is that fact that the product and sum of two nilsequences is also a nilsequence, and hence the set of nilsequences form a kind of algebra (graded by complexity). The role of algebraic structure of this kind was brought to the fore in the work of Gowers \cite{gowers-regularity} cited above.

Theorem \ref{gis-conj} then implies

\begin{corollary}[Alternate formulation of $\GI(s)$]\label{corda} Let $s \geq 1$, and suppose that $f : [N] \rightarrow [-1,1]$ is such that $\Vert f \Vert_{U^{s+1}[N]} \geq \delta$ for some $\delta>0$.   Then there exists a growth function $\Phi_{s,\delta}$ depending only on $s,\delta$, and an $s$-measurable set $E \subset N$ with growth function $\Phi_{s,\delta}$, such that
$$ |\E_{n \in [N]} f(n) 1_E(n)| \gg_{s,\delta} 1.$$
\end{corollary}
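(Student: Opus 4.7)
The plan is to convert the correlation with a nilsequence supplied by Theorem \ref{gis-conj} into a correlation with the indicator of a super-level set of that nilsequence, the only subtle point being to arrange for the resulting set to be $s$-measurable with a growth function depending on $s,\delta$ alone.

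First, I would apply Theorem \ref{gis-conj} to obtain a polynomial nilsequence $\psi = F(g(\cdot)\Gamma)$ of degree $\leq s$ and complexity $\leq M_0$ with $|\langle f,\psi\rangle_{L^2[N]}| \geq c$, where $M_0$ and $1/c$ are $O_{s,\delta}(1)$. Since $f$ is real-valued, passing to the real or imaginary part of $\psi$ and then dividing by $\|F\|_{L^\infty} \leq M_0$, I may assume (after updating $c$ and $M_0$) that $\psi : \Z \to [-1,1]$ is real-valued with $|\langle f,\psi\rangle| \geq c$. Writing $\psi = \psi_+ - \psi_-$ and applying layer cake, $\psi_+(n) = \int_0^1 1_{\psi(n) > t}\,dt$; the symmetric case where $|\langle f,\psi_-\rangle| \geq c/2$ is handled by replacing $\psi$ with $-\psi$ throughout. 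In either case I obtain $\int_0^1 |\langle f, 1_{\psi > t}\rangle|\,dt \geq c/2$, so a direct averaging shows that
\[ T := \{t \in [0,1] : |\langle f, 1_{\psi > t}\rangle| \geq c/4\} \]
has Lebesgue measure $|T| \geq c/4$.

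The crux is to pick a single $t_0 \in T$ for which the super-level set $E := \{n \in [N] : \psi(n) > t_0\}$ admits smooth nilsequence approximants at every scale $M$. For parameters $M'(M)$ to be chosen, set
\[ B_M := \bigl\{ t \in [0,1] : \tfrac{1}{N}\bigl|\{n \in [N] : t < \psi(n) \leq t + 1/M'(M)\}\bigr| > 1/M^2 \bigr\}. \]
Fubini yields $\int_{[0,1]} \tfrac{1}{N}|\{n : t < \psi(n) \leq t + 1/M'(M)\}|\,dt = 1/M'(M)$, hence $|B_M| \leq M^2/M'(M)$. Taking $M'(M) := M^2 \cdot 2^{M+\ell}$ with $\ell$ depending only on $c$ and large enough that $\sum_{M \geq 1} 2^{-M-\ell} < c/8$, the intersection $T \setminus \bigcup_M B_M$ has positive measure, and I fix any $t_0$ inside it.

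To verify that $E$ is $s$-measurable with a suitable growth function $\Phi_{s,\delta}$, for each $M \geq 1$ I would use the piecewise-linear clip $\eta(x) := \max(0, \min(1, M'(M)(x - t_0)))$, whose Lipschitz constant is $M'(M)$. Then $\eta \circ \psi = (\eta \circ F)(g(\cdot)\Gamma)$ is a polynomial nilsequence of degree $\leq s$ taking values in $[0,1]$, on the same nilmanifold as $\psi$, with Lipschitz constant $\leq M'(M)\|F\|_{\operatorname{Lip}}$; this gives complexity at most $\Phi_{s,\delta}(M)$ for a growth function $\Phi_{s,\delta}(M) = O_{s,\delta}(M^2 \cdot 2^{M+\ell})$ depending only on $s,\delta$. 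Moreover $\eta(\psi(n))$ differs from $1_E(n)$ only when $\psi(n) \in (t_0, t_0 + 1/M'(M)]$, so
\[ \|\eta \circ \psi - 1_E\|_{L^2[N]}^2 \leq \tfrac{1}{N}|\{n : t_0 < \psi(n) \leq t_0 + 1/M'(M)\}| \leq 1/M^2. \]
Combined with $|\E_n f(n) 1_E(n)| \geq c/4$ from $t_0 \in T$, this yields Corollary \ref{corda}. The main obstacle throughout is the uniformity requirement on $\Phi_{s,\delta}$: it must be independent of $N$ and of the specific $\psi$ produced by $\GI(s)$. This is precisely what the Fubini bound $|B_M| \leq M^2/M'(M)$ delivers, and the quantitative pigeonhole over scales $M$ then selects a single $t_0$ that works at every scale simultaneously.
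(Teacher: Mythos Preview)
Your proof is correct and follows essentially the same strategy as the paper's: apply $\GI(s)$, reduce to a real $[0,1]$-valued nilsequence, use the layer-cake representation to pass to super-level sets $E_t$, and then select a threshold $t_0$ at which the level set is ``regular'' enough to be approximated by Lipschitz cutoffs of $\psi$ at every scale.

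The one genuine methodological difference is in how the good threshold is found. The paper invokes the Hardy--Littlewood maximal inequality (or Besicovitch) on the maximal function $M(t) = \sup_{r>0} (2rN)^{-1} |\{n : |\psi(n)-t| \leq r\}|$ to produce a single $t$ with $M(t) = O(1)$, which immediately yields $|\{n : |\psi(n)-t| \leq r\}| \ll rN$ uniformly in $r$. You instead run an explicit Borel--Cantelli argument: for each scale $M$ you define a bad set $B_M$ of thresholds via a Fubini/Markov bound, make the measures summable by choosing $M'(M)$ large enough, and pick $t_0 \in T \setminus \bigcup_M B_M$. Your route is more elementary (it avoids any maximal inequality), at the cost of a slightly worse growth function $\Phi_{s,\delta}(M) \asymp M^2 2^M$ versus the paper's implicit polynomial-in-$M$ bound coming from $r \sim M^{-2}$; this difference is immaterial for the application.
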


\begin{proof}  We allow implied constants to depend on $s,\delta$.  By Theorem \ref{gis-conj}, there exists a degree $\leq s$ polynomial nilsequence $\psi$ of complexity $O(1)$ such that
$$ |\E_{n \in [N]} f(n) \overline{\psi(n)}| \gg 1.$$
By taking real and imaginary parts of $\psi$, and then positive and negative parts, and rescaling, we may assume without loss of generality that $\psi$ takes values in $[0,1]$.  By Fubini's theorem, we then have
$$ |\int_0^1 \E_{n \in [N]} f(n) 1_{E_t}(n)\ dt| \gg 1$$
where $E_t := \{ n \in [N]: \psi(n) \geq t \}$.  We thus see that there is a subset $\Omega \subset [0,1]$ of Lebesgue measure $|\Omega| \gg 1$ such that 
$$ |\E_{n \in [N]} f(n) 1_{E_t}(n)| \gg 1$$
uniformly for all $t \in \Omega$.

It remains to show that at least one\footnote{Here we are, in some sense, finding a ``regular'' nil-Bohr set $\{n \in [N] : \psi(n) \geq t\}$, that is to say one rather insensitive to small changes in the value of $t$.  A similar idea also appears in \cite[Claim 2.2]{rttv}.} of the $E_t$ is $s$-measurable with respect to a suitable growth function.  For any $t \in \R$, we consider the maximal function
$$ M(t) := \sup_{r>0} \frac{1}{2r} \frac{1}{N} |\{ n \in [N]: |\psi(n)-t| \leq r \}|.$$
From the Hardy-Littlewood maximal inequality or the Besicovitch covering lemma we have that the set $\{ t \in \R: M(t) \ge \lambda \}$ has Lebesgue measure $O(1/\lambda)$ for any $\lambda > 0$.  Thus, we can find $t \in \Omega$ such that $M(t)= O(1)$.  Fixing such a $t$, we then see that 
$$ |\{ n \in [N]: |\psi(n)-t| \leq r \}| \ll rN$$
for all $r>0$.  As a consequence, for any $r>0$, one can then approximate $1_{E_t}$ to within $O( \sqrt{r} )$ in $L^2[N]$ norm by a Lipschitz function of $\psi$ with Lipschitz norm $O(1/r)$.  This implies that $1_{E_t}$ is $s$-measurable with some growth function $\Phi$ depending only on $s,\delta$, and the claim follows.
\end{proof}

We rephrase this fact in terms of conditional expectations. The following definition, like Definition \ref{s-meas-def}, will only be needed until the end of the proof of Proposition \ref{semi-reg}.

\begin{definition}[$s$-factors] An \emph{$s$-factor} $\B$ of complexity $\leq M$ and growth function $\Phi$ is a partition of $[N]$ into at most $M$ sets (or \emph{cells}) $E_1,\ldots,E_m$ which are $s$-measurable of growth function $\Phi$.  Given an $s$-factor $\B$ and a function $f: [N] \to \C$, we define the \emph{conditional expectation} $\E(f|\B): [N] \to \C$ of $f$ with respect to the $s$-factor to be the function which equals $\E_{n \in E_j} f(n)$ on each cell of the partition.  We define the \emph{index} or \emph{energy} ${\mathcal E}(\B)$ of the $s$-factor $\B$ relative to $f$ to be the quantity $\|\E(f|\B)\|_{L^2[N]}^2$.  

An $s$-factor $\B'$ is said to \emph{refine} another $\B$ if every cell of $\B'$ is contained in a cell of $\B$.  
\end{definition}

\begin{corollary}[Lack of uniformity implies energy increment]\label{energy-inc}  
Let $s \geq 1$, let $\B$ be an $s$-factor of complexity $\leq M$ and some growth function $\Phi$, and suppose that $f : [N] \rightarrow [0,1]$ is such that $\Vert f - \E(f|\B) \Vert_{U^{s+1}[N]} \geq \delta$ for some $\delta>0$.   Then there exists a refinement $\B'$ of $\B$ of complexity $\leq 2M$ and some growth function depending on $s,\delta,M,\Phi$, such that
$$ {\mathcal E}(\B') - {\mathcal E}(\B) \gg_{s,\delta} 1.$$
\end{corollary}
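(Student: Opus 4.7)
The strategy is the standard energy-increment dichotomy, using Corollary \ref{corda} to convert Gowers non-uniformity of $f - \E(f|\B)$ into correlation with an $s$-measurable set, and then using that set to refine $\B$ into $\B'$.

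First, set $g := f - \E(f|\B)$, which is a function $[N]\to[-1,1]$ with $\|g\|_{U^{s+1}[N]} \geq \delta$. Applying Corollary \ref{corda} produces an $s$-measurable set $E \subseteq [N]$, with some growth function $\Phi_{s,\delta}$ depending only on $s,\delta$, such that
$$ \bigl| \E_{n \in [N]} g(n) 1_E(n) \bigr| \gg_{s,\delta} 1. $$
Now form $\B'$ by refining each cell $E_j$ of $\B$ into the two sets $E_j \cap E$ and $E_j \setminus E$. Since boolean combinations of $s$-measurable sets are $s$-measurable (with a new growth function depending only on the original ones), $\B'$ is an $s$-factor of complexity $\leq 2M$ with growth function depending on $s,\delta,M,\Phi$, and by construction $\B'$ refines $\B$.

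Next, since $\B'$ refines $\B$, we have the tower property $\E(\E(f|\B')|\B) = \E(f|\B)$, and in particular $\E(f|\B)$ is $\B'$-measurable. This gives the Pythagoras identity
$$ \|\E(f|\B')\|_{L^2[N]}^2 = \|\E(f|\B)\|_{L^2[N]}^2 + \|\E(f|\B') - \E(f|\B)\|_{L^2[N]}^2, $$
so it suffices to show $\|\E(f|\B') - \E(f|\B)\|_{L^2[N]}^2 \gg_{s,\delta} 1$. Since $E$ is $\B'$-measurable, the function $1_E$ lies in the space of $\B'$-measurable functions, hence
$$ \E_{n \in [N]} g(n) 1_E(n) = \E_{n \in [N]} \E(g|\B')(n) 1_E(n) = \E_{n \in [N]} \bigl(\E(f|\B')(n) - \E(f|\B)(n)\bigr) 1_E(n). $$
By Cauchy--Schwarz and $\|1_E\|_{L^2[N]} \leq 1$,
$$ \bigl|\E_{n\in[N]} g(n) 1_E(n)\bigr| \leq \|\E(f|\B') - \E(f|\B)\|_{L^2[N]}, $$
and squaring yields $\mathcal{E}(\B') - \mathcal{E}(\B) \gg_{s,\delta} 1$, as required.

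The only slightly delicate point is bookkeeping the growth function of the refinement $\B'$: one must verify that the new cells $E_j \cap E$ and $E_j \setminus E$ remain $s$-measurable after combining the original growth function $\Phi$ of $\B$ with $\Phi_{s,\delta}$ from Corollary \ref{corda}. This is routine, since products, sums, and complements of approximating nilsequences are again nilsequences (with controlled increase in complexity), and this is the only place algebraic closure of the class of $s$-measurable sets enters. Everything else is a direct application of Corollary \ref{corda} together with the Hilbert-space Pythagoras identity for conditional expectations.
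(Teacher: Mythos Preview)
Your proof is correct and follows essentially the same approach as the paper: apply Corollary \ref{corda} to $f-\E(f|\B)$ to obtain an $s$-measurable set $E$ with large correlation, take $\B'$ to be the common refinement of $\B$ and $\{E,[N]\setminus E\}$, and then use $\B'$-measurability of $1_E$ together with Cauchy--Schwarz and Pythagoras to extract the energy increment. The only differences are expository (you spell out the tower property and the boolean-closure remark for the growth function more explicitly than the paper does).
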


\begin{proof}  By Corollary \ref{corda}, we can find an $s$-measurable set $E$ with a growth function depending on $s,\delta$ such that
\begin{equation}\label{fee}
|\langle f - \E(f|\B), 1_E \rangle_{L^2[N]}| \gg_{s,\delta} 1
\end{equation}
Now let $\B'$ be the partition generated by $\B$ and $E$; then $\B'$ clearly has complexity $\leq 2M$ and a growth function depending on $s,\delta,M,\Phi$.  Since $1_E$ is measurable with respect to the partition $\B'$ (that is to say it is constant on each cell of this partition), we can rewrite the left-hand side of \eqref{fee} as
$$ |\langle \E(f|\B') - \E(f|\B), 1_E \rangle_{L^2[N]}|$$
and hence by the Cauchy-Schwarz inequality
$$ \| \E(f|\B') - \E(f|\B) \|_{L^2[N]} \gg_{s,\delta} 1.$$
The claim then follows from Pythagoras' theorem.
\end{proof}

We can iterate this to obtain a weak regularity lemma, analogous to the weak graph regularity lemma of Frieze and Kannan \cite{frieze}.

\begin{corollary}\label{weak-reg}  
Let $s \geq 1$, let $\B$ be an $s$-factor of complexity $\leq M$ and some growth function $\Phi$, let $f : [N] \rightarrow [0,1]$, and let $\eps > 0$.  Then there exists a refinement $\B'$ of $\B$ of complexity $O_{s,M,\eps}(1)$ and some growth function depending on $s,\eps,M,\Phi$, such that
\begin{equation}\label{fF}
\| f - \E(f|\B') \|_{U^{s+1}[N]} \leq \eps.
\end{equation}
\end{corollary}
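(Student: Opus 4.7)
The plan is to prove Corollary \ref{weak-reg} by a standard energy increment iteration of Corollary \ref{energy-inc}, starting from the given factor $\B$ and refining at each stage until the $U^{s+1}$-norm bound \eqref{fF} holds.

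Set $\B_0 := \B$, with complexity $M_0 \leq M$ and growth function $\Phi_0 := \Phi$. Inductively, suppose at step $i$ we have an $s$-factor $\B_i$ refining $\B$, with complexity $\leq M_i$ and growth function $\Phi_i$. If $\|f - \E(f|\B_i)\|_{U^{s+1}[N]} \leq \eps$, we stop and take $\B' := \B_i$. Otherwise, Corollary \ref{energy-inc} (applied to $\B_i$ with $\delta := \eps$) produces a refinement $\B_{i+1}$ of $\B_i$ of complexity $M_{i+1} \leq 2 M_i$ and with some new growth function $\Phi_{i+1}$ depending on $s,\eps,M_i,\Phi_i$, satisfying
\[
\mathcal{E}(\B_{i+1}) - \mathcal{E}(\B_i) \geq c(s,\eps),
\]
for some constant $c(s,\eps) > 0$. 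Note that $\B_{i+1}$ refines $\B_i$ (hence $\B$) automatically.

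Because $0 \leq f \leq 1$, the energy $\mathcal{E}(\B_i) = \|\E(f|\B_i)\|_{L^2[N]}^2$ is always bounded above by $1$. Since each step increases this energy by at least $c(s,\eps)$, the iteration must terminate after at most $I := \lfloor 1/c(s,\eps)\rfloor + 1 = O_{s,\eps}(1)$ steps. At termination we obtain the desired refinement $\B'$ with $\|f - \E(f|\B')\|_{U^{s+1}[N]} \leq \eps$.

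It remains to track the complexity and growth function. The complexity satisfies $M_{i+1} \leq 2 M_i$, so after at most $I$ steps we have complexity at most $2^I M = O_{s,M,\eps}(1)$. The growth function $\Phi_{i+1}$ is produced by Corollary \ref{energy-inc} as a function of $s,\eps,M_i,\Phi_i$; composing these through the $O_{s,\eps}(1)$ iterations yields a final growth function depending only on $s,\eps,M,\Phi$, as required. There is no substantive obstacle here: the argument is the familiar $L^2$-energy increment scheme, and the only point requiring any care is the bookkeeping to verify that the growth function at each stage remains a function of the original parameters $(s,\eps,M,\Phi)$ after a bounded number of iterations.
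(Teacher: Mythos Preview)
Your proof is correct and follows essentially the same energy-increment iteration as the paper's own argument: repeatedly apply Corollary~\ref{energy-inc} until the $U^{s+1}$ bound holds, using the fact that the energy lies in $[0,1]$ to bound the number of iterations by $O_{s,\eps}(1)$. Your version is slightly more explicit about the complexity doubling and the bookkeeping for the growth function, but the approach is identical.
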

\begin{proof} We define a sequence of successively more refined factors $\B'$, starting with $\B' := \B$.  If \eqref{fF} already holds then we are done, so suppose that this is not the case.  Then by Corollary \ref{energy-inc}, we can find a refinement $\B''$ of complexity $O_{s,M,\eps}(1)$ and some growth function depending on $s,\eps,M,\Phi$ whose energy is larger than that of $\B'$ by a factor $\gg_{s,\eps} 1$.  On the other hand, the energy clearly ranges between $0$ and $1$.  Thus after replacing $\B'$ with $\B''$ and iterating this algorithm at most $O_{s,\eps}(1)$ times we obtain the claim.
\end{proof}

One final iteration then gives the full non-irrational regularity lemma.

\begin{proposition}\label{semi-reg}  Let $f: [N] \to [0,1]$, let $s \geq 1$, let $\eps > 0$, and let $\Grow: \R^+ \to \R^+$ be a growth function.  Then there exists a quantity $M = O_{s,\eps,\Grow}(1)$ and a decomposition 
$$ f = f_\nil + f_\sml + f_\unf$$
of $f$ into functions $f_\nil, f_\unf: [N] \to [-1,1]$ such that:

\textup{(}$f_\nil$ structured\textup{)} $f_\nil$ equals a degree $\leq s$ polynomial nilsequence of complexity $\leq M$.

\textup{(}$f_\sml$ small\textup{)} $\| f_\sml \|_{L^2[N]} \leq \eps$.

\textup{(}$f_\unf$ very uniform\textup{)} $\|f_\nil\|_{U^{s+1}[N]} \leq 1/\Grow(M)$.

\textup{(}Nonnegativity\textup{)} $f_\nil$ and $f_\nil + f_\sml$ take values in $[0,1]$.
\end{proposition}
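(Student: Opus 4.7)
The plan is to iterate Corollary \ref{weak-reg}, then apply pigeonhole on the bounded monotone energy sequence to locate two consecutive factors in the chain between which the conditional expectation barely changes in $L^2$. Starting from the trivial $s$-factor $\B_0$, build a chain $\B_0 \subseteq \B_1 \subseteq \cdots \subseteq \B_K$ with $K := \lceil 4/\eps^2\rceil$. At step $i$, where $\B_i$ has some complexity $M_i$ and growth function $\Phi_i$, write $\E(f|\B_i) = \sum_{E \in \B_i}(\E_{n\in E}f(n))\, 1_E$; by $s$-measurability of each cell and the algebra of nilsequences (sums, products, and Lipschitz compositions preserve the class with controlled complexity blow-up), this function is $L^2[N]$-approximable to within $\eps/4$ by a degree $\leq s$ polynomial nilsequence $\tilde f_i$ of complexity $M_i^\ast = O_{s, M_i, \eps, \Phi_i}(1)$. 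Set $\delta_i := 1/\Grow(M_i^\ast)$, and apply Corollary \ref{weak-reg} to $\B_i$ with threshold $\delta_i$ to produce a refinement $\B_{i+1}$ of complexity $M_{i+1} = O_{s, M_i, \delta_i}(1)$ and some growth function $\Phi_{i+1}$ satisfying $\|f - \E(f|\B_{i+1})\|_{U^{s+1}[N]} \leq \delta_i$.

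\textbf{Selecting the good index and defining the decomposition.} Since $0 \leq \mathcal{E}(\B_0) \leq \cdots \leq \mathcal{E}(\B_K) \leq 1$ and $K = \lceil 4/\eps^2\rceil$, the pigeonhole principle furnishes an index $i < K$ with $\mathcal{E}(\B_{i+1}) - \mathcal{E}(\B_i) \leq \eps^2/4$; by Pythagoras (using that $\B_{i+1}$ refines $\B_i$), $\|\E(f|\B_{i+1}) - \E(f|\B_i)\|_{L^2[N]} \leq \eps/2$. Define $f_{\nil}$ to be the truncation of $\tilde f_i$ to $[0,1]$, realised by composing the underlying Lipschitz function $F$ on the nilmanifold with the contraction $t \mapsto \max(0, \min(1, t))$ applied to the real part; this preserves the Lipschitz norm and only moves $\tilde f_i$ closer to the $[0,1]$-valued $\E(f|\B_i)$. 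Set $f_{\sml} := \E(f|\B_{i+1}) - f_{\nil}$ and $f_{\unf} := f - \E(f|\B_{i+1})$. Then $\|f_{\sml}\|_{L^2[N]} \leq \|\E(f|\B_{i+1}) - \E(f|\B_i)\|_{L^2} + \|\E(f|\B_i) - f_{\nil}\|_{L^2} \leq \eps/2 + \eps/4 \leq \eps$; the uniformity bound $\|f_{\unf}\|_{U^{s+1}[N]} \leq \delta_i = 1/\Grow(M)$ holds with $M := M_i^\ast$; and $f_{\nil} \in [0,1]$ as well as $f_{\nil}+f_{\sml} = \E(f|\B_{i+1}) \in [0,1]$ are automatic.

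\textbf{Main obstacle.} The crux is the diagonalisation-style calibration: at step $i$ we must already commit to a uniformity threshold $\delta_i$ determined by the complexity $M_i^\ast$ of the nilsequence approximation we would use \emph{if} $i$ were the index selected by pigeonhole, without yet knowing which index that will be. Because $M_i^\ast$ is computable from $M_i$, $\Phi_i$, and the fixed parameters $s, \eps, \Grow$, the choice $\delta_i = 1/\Grow(M_i^\ast)$ is consistent, and $K = O(1/\eps^2)$ iterations produce a final complexity $M = O_{s, \eps, \Grow}(1)$ bounded by an iterated tower of $\Grow$ of height $O(1/\eps^2)$---the familiar tower-type behaviour of strong regularity lemmas. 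The only other subtlety is verifying that sums and Lipschitz truncations of polynomial nilsequences remain polynomial nilsequences of controlled complexity, which follows from the algebra of such sequences implicit in the Boolean stability of $s$-measurability noted after Definition \ref{s-meas-def}.
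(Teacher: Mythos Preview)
Your proof is correct and follows essentially the same route as the paper: iterate the weak regularity lemma (Corollary \ref{weak-reg}) to build a refining chain of $s$-factors, use pigeonhole on the bounded monotone energies to find an index where the $L^2$ increment is small, approximate $\E(f|\B_i)$ by a nilsequence (retracted to $[0,1]$), and set $f_\unf := f - \E(f|\B_{i+1})$, $f_\sml := \E(f|\B_{i+1}) - f_\nil$. The paper packages the calibration you describe into a single auxiliary growth function $\tilde\Grow$ with $M_{i+1} := \tilde\Grow(M_i)$, whereas you spell out the diagonalisation explicitly via $\delta_i := 1/\Grow(M_i^\ast)$; these are the same mechanism.
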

\begin{proof}  We need a growth function $\tilde \Grow: \R^+ \to \R^+$, somewhat more rapidly growing than $\Grow$ in manner that depends on $\Grow$, $s$, $\eps$. We will specify the exact requirements we have of it later.  We then define a sequence $1 = M_0 \leq M_1 \leq \ldots$ by setting $M_0 := 1$ and $M_{i+1} := \tilde \Grow(M_i)$.

Applying Corollary \ref{weak-reg} repeatedly, we may find for each $i \geq 0$ an $s$-factor $\B_i$ of complexity $O_{s,M_i}(1)$ and a growth function depending on $s,M_i$, such that each $\B_i$ refines $\B_{i-1}$, and such that
$$ \| f - \E(f|\B_i) \|_{U^{s+1}[N]} \leq 1/M_i$$
for all $i \geq 0$.

By Pythagoras' theorem, the energies ${\mathcal E}(\B_i)$ are non-decreasing, and also range between $0$ and $1$.  Thus by the pigeonhole principle, one can find $i = O_\eps(1)$ such that
$$ {\mathcal E}(\B_{i+1}) - {\mathcal E}(\B_{i}) \leq \eps^2/4,$$
which by Pythagoras' theorem again is equivalent to
$$ \| \E(f|\B_{i+1}) - \E(f|\B_{i}) \|_{L^2[N]} \leq \eps/2.$$
Meanwhile, as $\B_i$ is an $s$-factor and $f$ is bounded, we can find a degree $\leq s$ polynomial nilsequence $f_\nil: [N] \to \R$ of complexity $O_{s,M_i}(1)$ such that
$$ \| \E(f|\B_{i}) - f_\nil \|_{L^2[N]} \leq \eps/2.$$
Since $\E(f|\B_{i})$ ranges in $[0,1]$, we may retract $f_\nil$ to $[0,1]$ also (note that this does not increase the complexity of $f_\nil$).  If we then set $f_\unf := f -  \E(f|\B_{i+1})$ and $f_\sml :=  \E(f|\B_{i+1}) - f_\nil$, we obtain the claim.
\end{proof}

\emph{Remark.} The application of the Hardy-Littlewood maximal inequality in the proof of Corollary \ref{corda} makes for a reasonably tidy argument. A more direct approach would be to carve up $[N]$ into approximate level sets of nilsequences, and then to approximate the projections onto the factors thus defined by nilsequences using the Weierstrass approximation theorem. There are a number of technicalities involved in this approach, chiefly involving the need to choose the approximate level sets randomly. This kind of argument was employed, in a closely related context, in \cite[Chapter 7]{green-tao-longprimeaps}.  One can also use utilise arguments based on the Hahn-Banach theorem instead; see \cite{gowers-regularity}, \cite{rttv}, and \cite{gowers-wolf-1,gowers-wolf-2,gowers-wolf-3}.\vspace{11pt}

\textsc{Obtaining irrationality.} Our task now is to replace the nilsequence $f_{\nil}$ appearing in Proposition \ref{semi-reg} with a highly ``irrational'' nilsequence as advertised in the statement of our main theorem, Theorem \ref{strong-reg}. It turns out to be sufficient to establish the following claim.

\begin{proposition}\label{totalis}  Let $s, M_0 \geq 1$, let $\Grow$ be a growth function, and let $f: \Z \to [0,1]$ be a degree $\leq s$ nilsequence of complexity $\leq M_0$.  Then there exists an $M = O_{s,M_0, \Grow}(1)$, such that $f$ \textup{(}when restricted to $[N]$\textup{)} is also a $(\Grow(M),N)$-irrational degree $\leq s$ virtual nilsequence of complexity $\leq M$ at scale $N$.\end{proposition}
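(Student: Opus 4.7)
The plan is to deduce this proposition from the quantitative factorization theorem for polynomial sequences established in \cite{green-tao-nilratner}. Write $f(n) = F(g(n)\Gamma)$, where $(G/\Gamma,G_\bullet)$ is a filtered nilmanifold of complexity $\leq M_0$, $g \in \poly(\Z,G_\bullet)$, and $F: G/\Gamma \to \C$ has Lipschitz norm $\leq M_0$. The goal is to re-present $f|_{[N]}$ using a (possibly smaller) filtered nilmanifold on which the underlying polynomial sequence is $(\Grow(M),N)$-irrational, at the cost of introducing Lipschitz dependence on the two auxiliary variables $n \md{q}$ and $n/N$ allowed in a virtual nilsequence.

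The key step is to apply the factorization theorem of \cite{green-tao-nilratner} to $g$ at scale $N$, with an auxiliary growth function $\tilde\Grow$ chosen more rapidly growing than $\Grow$ in a manner depending on $\Grow,s,M_0$. This produces a complexity parameter $M' = O_{s,M_0,\tilde\Grow}(1)$ and a factorization
\[ g(n) = \varepsilon(n)\, g'(n)\, \gamma(n),\]
where $\varepsilon:\Z \to G$ is $(M',N)$-smooth (so varies on scale $\gg N/M'$ with a quantitative Lipschitz estimate), $g' \in \poly(\Z,G'_\bullet)$ takes values in a rational closed connected subgroup $G' \leq G$ with an induced filtration $G'_\bullet$ and is $(\tilde\Grow(M'),N)$-irrational with respect to $G'_\bullet$, and $\gamma:\Z \to G$ is $M'$-rational, so that $n \mapsto \gamma(n)\Gamma$ is periodic with some period $q \leq M'$. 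By \cite{green-tao-nilratner}, the subgroup $G'$ admits a Mal'cev basis adapted to $G'_\bullet$ of rationality bounded by some $M = M(M',M_0,s)$, so $(G'/\Gamma',G'_\bullet)$ has complexity $\leq M$ where $\Gamma' := \Gamma \cap G'$.

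Next, repackage. Substituting the factorization gives
\[ f(n) = F\bigl(\varepsilon(n)\, g'(n)\, \gamma(n)\,\Gamma\bigr).\]
The smoothness of $\varepsilon$ lets us replace $\varepsilon(n)$ by $\tilde\varepsilon(n/N)$ up to an acceptable Lipschitz error in $F$, where $\tilde\varepsilon:\R \to G$ is Lipschitz of norm $O_{M'}(1)$; the periodicity of $\gamma$ lets us write $\gamma(n) = \tilde\gamma(n \md q)$ for a fixed map $\tilde\gamma:\Z/q\Z \to G$. Defining
\[ \tilde F(x,a,t) := F\bigl( \tilde\varepsilon(t)\cdot x \cdot \tilde\gamma(a)\, \Gamma \bigr) \qquad (x \in G'/\Gamma',\ a \in \Z/q\Z,\ t \in \R),\]
the combined Lipschitz norm of $\tilde F$ on $G'/\Gamma' \times \Z/q\Z \times \R$ is controlled by a product of the Lipschitz norms of $F$, $\tilde\varepsilon$, right-translation by $\tilde\gamma(a)$, and the inclusion $G' \hookrightarrow G$, all of which are $O_{M'}(1)$. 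Hence $\tilde F$ has Lipschitz norm $\leq M$ after absorbing constants into $M$, and $f(n) = \tilde F(g'(n)\Gamma',\, n \md q,\, n/N)$ is a virtual nilsequence of complexity $\leq M$ at scale $N$ with underlying sequence $g'$ on $(G'/\Gamma',G'_\bullet)$.

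Finally, the irrationality parameter must be upgraded from $\tilde\Grow(M')$ to $\Grow(M)$. Since $M$ is a fixed function $M = M(M',M_0,s)$ of $M'$, we simply choose $\tilde\Grow$ at the outset so that $\tilde\Grow(M') \geq \Grow(M(M',M_0,s))$ for all $M'$; the resulting $M = O_{s,M_0,\Grow}(1)$ has the desired property. The main obstacle in executing this plan is bookkeeping: one must verify that the induced Mal'cev basis on $G'$ and the resulting metric on $G'/\Gamma'$ interact correctly with the translations by $\tilde\varepsilon(t)$ and $\tilde\gamma(a)$ so that the claimed Lipschitz bound on $\tilde F$ holds, and one must check that the notion of irrationality used by the factorization theorem in \cite{green-tao-nilratner} matches Definition \ref{irrat-def} used here (up to a loss that can be absorbed into $M$). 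All of this is straightforward once the factorization theorem is in hand, which is the deep input.
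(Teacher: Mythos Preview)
Your overall architecture (factorize, then repackage as a virtual nilsequence) matches the paper's, but the key input you invoke does not exist in the form you need. The factorization theorem of \cite{green-tao-nilratner} produces a sequence $g'$ that is \emph{equidistributed} on $G'/\Gamma'$, not one that is \emph{irrational} in the sense of Definition~\ref{irrat-def}. These notions are not interchangeable: irrationality is a strictly stronger, filtration-sensitive condition on the Taylor coefficients (see the discussion in Appendix~\ref{appendix-a}), and there is nothing in \cite{green-tao-nilratner} that delivers it. Your closing remark that one need only ``check that the notion of irrationality used by the factorization theorem in \cite{green-tao-nilratner} matches Definition~\ref{irrat-def}'' misreads that paper; there is no such notion there to match. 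This is precisely why the present paper proves a new factorization result (Lemma~\ref{init} iterated to Lemma~\ref{factor}): when $g'$ fails to be $(A,N)$-irrational, one exhibits an $i$-horizontal character witnessing this and uses it to drop to a subnilmanifold of strictly smaller \emph{total} dimension $\sum_i \dim G'_{(i)}$, not merely smaller $\dim G'$. That dimension-reduction argument is the missing technical core of your proposal, not bookkeeping.

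There is also a secondary well-definedness issue in your $\tilde F$. With $x \in G'/\Gamma'$ and $\Gamma' = \Gamma \cap G'$, the expression $F(\tilde\varepsilon(t)\,\hat x\,\tilde\gamma(a)\,\Gamma)$ is independent of the lift $\hat x$ only if $\tilde\gamma(a)^{-1}\Gamma'\tilde\gamma(a) \subseteq \Gamma$ for every $a$, which does not follow from $M'$-rationality of $\tilde\gamma(a)$ alone. The paper handles this by passing to a finite-index sublattice of $\Gamma$ contained in $\bigcap_a \gamma(a)\Gamma\gamma(a)^{-1}$; you would need to do the same.
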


To establish Theorem \ref{strong-reg} from this and Proposition \ref{semi-reg} one first applies the latter result with $\Grow$ replaced by a much more rapid growth function $\Grow'$, and then one applies Proposition \ref{totalis} to the structured component $f_\nil$ obtained in Theorem \ref{weak-reg}.  

It remains to prove Proposition \ref{totalis}.  Let $s, M_0,\Grow,\psi$ be as in that proposition.  By definition, we have $\psi = F_0( g_0(n) \Gamma )$ for some degree $\leq s$ filtered nilmanifold $(G/\Gamma,G_\bullet)$ of complexity $\leq M_0$, a polynomial sequence $g_0 \in \poly(\Z,G_\bullet)$, and a function $F_0: G/\Gamma \to \C$ which has a Lipschitz norm of at most $M_0$.   Since $\psi$ takes values in $[0,1]$, we may assume without loss of generality that $F_0$ is real, and by replacing $F_0$ with the retraction $\max(\min(F_0,1),0)$ to $[0,1]$ if necessary, we may assume that $F_0$ also takes values in $[0,1]$.  Henceforth $(G/\Gamma,G_\bullet)$, $g_0$, and $F_0$ are fixed.\vspace{11pt}

\textsc{Factorisation results.} One of the main results of our paper \cite{green-tao-nilratner} was a decomposition of an arbitrary polynomial nilsequence $g$ on $G/\Gamma$ into a product\footnote{In our paper \cite{green-tao-nilratner} the letter $\eps$ was used for a smooth nilsequence, but we use $\beta$ here to avoid conflict with various uses of $\eps$ to denote a small positive real number.} $\beta g' \gamma$, where $\beta$ is ``smooth'', $\gamma$ is ``rational'', and $g'(n)\Gamma$ is equidistributed inside some possibly smaller nilmanifold $G'/\Gamma'$. We need a similar result here, but with $g'$ having the somewhat stronger property of being \emph{irrational} that we mentioned in the introduction. The notion of irrationality is discussed in more detail in Appendix \ref{appendix-a}.

We will be also using the notions of \emph{smooth} and \emph{rational} polynomial sequences from \cite{green-tao-nilratner}. Again, the basic definitions and properties of these concepts are recalled in Appendix \ref{appendix-a}.  

Define a \emph{complexity $\leq M$ subnilmanifold} of $(G/\Gamma,G_\bullet)$ to be a degree $\leq s$ filtered nilmanifold $(G'/\Gamma',G'_\bullet)$ of complexity $\leq M$, where each subgroup $G'_{(i)}$ in the filtration $G'_\bullet$ is a rational subgroup of the associated subgroup $G_{(i)}$ of complexity $\leq M$, $\Gamma' = G' \cap \Gamma$, and each element of the Mal'cev basis of $(G'/\Gamma',G'_\bullet)$ is a rational linear combination of the Mal'cev basis of $(G/\Gamma,G_\bullet)$, where the coefficients all have height $\leq M$.  We define the \emph{total dimension} of such a nilmanifold to be the quantity $\sum_{i=0}^s \dim(G'_{(i)})$; this is also the dimension of $\poly(\Z,G_\bullet)$ (thanks to the Taylor series expansion, Lemma \ref{taylor-lem}).  

We make the easy remark that if $(G'/\Gamma',G'_\bullet)$ is a complexity $\leq M$ subnilmanfiold of $(G/\Gamma,G_\bullet)$ for some $M \geq M_0$, and $(G''/\Gamma'',G''_\bullet)$ is a complexity $\leq M$ subnilmanifold of $(G'/\Gamma',G'_\bullet)$, then $(G''/\Gamma'',G''_\bullet)$ is a complexity $O_M(1)$ subnilmanifold of $(G/\Gamma,G_\bullet)$.

Our first lemma is very similar in form to \cite[Lemma 7.9]{green-tao-nilratner}.

\begin{lemma}[Initial factorisation]\label{init}  Let $(G'/\Gamma',G'_\bullet)$ be a complexity $\leq M$ subnilmanifold of $(G/\Gamma,G_\bullet)$ for some $M \geq M_0$, let $g' \in \poly(\Z, G'_\bullet)$, and let $A > 0$ and $N \geq 1$.  Then at least one of the following statements hold:

\textup{(}Irrationality\textup{)} $g'$ is $(A,N)$-irrational in $(G'/\Gamma',G'_\bullet)$.

\textup{(}Dimension reduction\textup{)}  There exists a factorisation
$$ g' = \beta g'' \gamma$$
where $\beta \in \poly(\Z,G'_\bullet)$ is $(O_{M,A}(1),N)$-smooth, $g'' \in \poly(\Z,G''_\bullet)$ takes values in a subnilmanifold $(G''/\Gamma'',G''_\bullet)$ of $(G'/\Gamma',G'_\bullet)$ of strictly smaller total dimension and of complexity $O_{M,A}(1)$, and $\gamma \in \poly(\Z,G'_\bullet)$ is $O_{M,A}(1)$-rational.
\end{lemma}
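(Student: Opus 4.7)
The plan is to follow the strategy of \cite[Lemma 7.9]{green-tao-nilratner}, suitably strengthened to produce an irrational $g''$ rather than merely an equidistributed one, via a direct appeal to the definition of $(A,N)$-irrationality recorded in Appendix \ref{appendix-a}. The underlying dichotomy is that if $g'$ fails to be $(A,N)$-irrational, then there must exist an explicit witness to the failure: namely, a horizontal character on one of the factors $G'_{(i)}/G'_{(i+1)}$, of complexity at most $A$, which sends the relevant Taylor coefficient of $g'$ close to a rational of bounded height. This witness will then be used to cut down the filtration to one of strictly smaller total dimension.

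Concretely, I would first invoke the Taylor series expansion of polynomial sequences (Lemma \ref{taylor-lem}) to describe $g'$ in terms of Taylor coefficients $g'_i \in G'_{(i)}$ for $0 \leq i \leq s$. Failure of $(A,N)$-irrationality produces an index $i$ and a horizontal character $\eta: G'_{(i)}/G'_{(i+1)} \to \R/\Z$ of complexity at most $A$ for which $\| \eta(g'_i) \|_{\R/\Z} \ll A / N^i$. The kernel of $\eta$, pulled back to $G'_{(i)}$, furnishes a rational subgroup $H \leq G'_{(i)}$ of complexity $O_{M,A}(1)$ and strictly smaller dimension. I would then assemble a new filtration $G''_\bullet$ by taking $G''_{(i)} := H$ (and adjusting the higher-degree subgroups as needed to keep the commutator relations $[G''_{(j)}, G''_{(k)}] \subseteq G''_{(j+k)}$ intact and to preserve rationality of $G''_{(j)}$ in $G_{(j)}$). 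The desired factorisation $g' = \beta g'' \gamma$ is then obtained by peeling off, within the offending Taylor coefficient, a smooth piece (the small real deviation of $\eta(g'_i)$ from its nearest rational), a rational piece (the rational approximant itself), and a residual piece lying in $H$, and combining these into polynomial sequences using the group structure on $\poly(\Z, G'_\bullet)$ supplied by the Lazard--Leibman theorem \ref{ll-thm}.

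The main obstacle will be the bookkeeping required by the nonabelianness of $G'$: whenever the smooth and rational factors are moved past one another (or past the residual polynomial), commutator terms appear at higher degrees of the filtration and must be re-absorbed into the appropriate Taylor coefficients of $\beta$, $g''$, or $\gamma$ without disturbing the $(O_{M,A}(1),N)$-smoothness of $\beta$, the $O_{M,A}(1)$-rationality of $\gamma$, or the fact that $g''$ is a polynomial sequence with respect to the reduced filtration $G''_\bullet$. A downward induction on the degree, handling the highest-level obstruction first and then iterating so that lower-order corrections can be treated in turn, seems the cleanest way to organise this; all quantitative complexity bounds remain of the form $O_{M,A}(1)$ throughout because each step only involves finite-dimensional linear algebra with coefficients already controlled by $M$ and $A$.
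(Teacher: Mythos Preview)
Your proposal follows essentially the same route as the paper's proof: invoke Lemma \ref{short-factorisation} to factor the offending Taylor coefficient $g'_i = \beta_i g''_i \gamma_i$, replace the $i^{\text{th}}$ level of the filtration by the kernel of the witnessing $i$-horizontal character, and commute the smooth and rational pieces to the outside using Theorem \ref{ll-thm}.

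Two refinements are worth noting. First, no downward induction or adjustment of the higher-degree subgroups is required: the very definition of an $i$-horizontal character (Definition \ref{def-a4}) stipulates that $\xi_i$ vanishes on $[G'_{(j)},G'_{(i-j)}]$ for all $0 \leq j \leq i$, and this is precisely what is needed to ensure that setting $G''_{(i)} := \ker(\xi_i)$ while leaving $G''_{(j)} = G'_{(j)}$ for $j \neq i$ still yields a valid filtration. The commutator debris generated when pushing $\beta_i^{\binom{n}{i}}$ left and $\gamma_i^{\binom{n}{i}}$ right lands in $G'_{(i+1)} = G''_{(i+1)}$ and is simply absorbed into a modified tail $\tilde g_{>i}$ of the Taylor expansion; there is nothing to iterate. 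Second, the paper singles out the case $i=1$, where one must additionally factor the constant term $g'_0 = \beta_0 g''_0$ with $g''_0 \in \ker(\xi_1)$, so that one may set $G''_{(0)} = G''_{(1)} = \ker(\xi_1)$ and preserve the convention $G''_{(0)} = G''_{(1)}$ required of a filtration. Your sketch does not flag this boundary case.
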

\begin{proof} To make this proof a little more readable, we drop one dash from every expression. Thus $g'$ becomes $g$, $G''$ becomes $G'$, and so on. Suppose that $g$ is not $(A,N)$-irrational.  Recall (see Lemma \ref{taylor-lem}) that $g$ has a Taylor expansion that we may write in the form
\[ g(n) = g_0 g_1^{ \binom{n}{1}} g_2^{\binom{n}{2}} \dots g_s^{\binom{n}{s}},\] 
where $g_i \in G_{(i)}$ for each $i$. It follows from Lemma \ref{short-factorisation} that for some $i$, $1 \leq i \leq s$, we can factorise
$$ g_i = \beta_i g'_i \gamma_i,$$
where $g'_i \in G_{(i)}$ lies in the kernel of some horizontal character $\xi_i : G_{(i)} \rightarrow \R$ of complexity $O_{A,M}(1)$, $\gamma_i \in G_{(i)}$ is $O_{A,M}(1)$-rational in the sense that $\gamma_i^m \in \Gamma_{(i)}$ for some $m = O_{A,M}(1)$, and $\beta_i \in G_{(i)}$ has distance $O_{A,M}(1/N^i)$ from the origin.

We now divide into two cases, depending on whether $i>1$ or $i=1$.  First suppose that $i>1$.  Then the Taylor expansion of $g$ reads, with an obvious notation,
$$ g(n) = g_{< i}(n)(\beta_{i} g'_{i} \gamma_{i})^{\binom{n}{i}} g_{>i}(n).$$ By commutating all the $\beta_i$s to the left and all the $\gamma_i$s to the right, and using the group properties of polynomial sequences (Theorem \ref{ll-thm}), one can rewrite this as
$$ g(n) = \beta_{i}^{\binom{n}{i}} g'(n) \gamma_{i}^{\binom{n}{i}}$$
where
$$ g'(n) := g_{< i}(n) g_i^{\prime \binom{n}{i}} \tilde g_{>i}(n)  $$
and $\tilde g_{>i}(n)$ is another polynomial sequence taking values in $G_{(i+1)}$.  Observe that $g'$ is then a polynomial sequence adapted to the subnilmanifold $(G'/\Gamma',G'_\bullet)$, where $G'/\Gamma' = G/\Gamma$ and $G'_{(j)} = G_{(j)}$ for $j \neq i$, but $G'_{(i)} = \ker (\xi'_i)$. This is indeed a subnilmanifold, with complexity $O_{A,M}(1)$; note that $(G'_{(l)})_{l = 0}^{\infty}$ is a filtration, thanks to our insistence in the definition of $i$-horizontal character (cf. Definition \ref{irrat-def}) that $[G_{(j)}, G_{(i-j)}] \subseteq \ker(\xi'_i)$ for all $0 \leq j \leq i$.
Meanwhile, $\beta_{i}^{\binom{n}{i}}$ is a $(O_{A,M}(1),N)$-smooth sequence and $\gamma_{i}^{\binom{n}{i}}$ is a $O_{A,M}(1)$-rational sequence, so we have the desired factorisation in the $i>1$ case.

When $i=1$, the above argument does not quite work, because $G'_{(1)}$ would be distinct from $G'_{(0)}$ and would thus not qualify as a filtration.  But this can be easily remedied by performing an additional factorisation
$$ g_0 = \beta_0 g'_0$$
where $\beta_0 \in G'$ is a distance $O_{A,M}(1)$ from the identity, and $g'_0$ lies in the kernel of $\xi'_1$.  This leads to a factorisation of the form
$$ g(n) = \beta_0 \beta_1^n g'(n) \gamma_1^n$$
where
$$ g'(n) = g'_0 g_1^{\prime n} g'_{>1}(n)$$
and $g'_{>1}$ is a polynomial sequence taking values in $G'_{(2)}$.  One then argues as before, but now one sets both $G''_{(0)}$ and $G''_{(1)}$ equal to the kernel of $\xi'_1$. 
\end{proof}

We can iterate the above lemma to obtain the following result, which is analogous to \cite[Theorem 1.19]{green-tao-nilratner}. Apart from dealing with irrationality rather than equidistribution, the following result is somewhat different to that just cited in that one requires an arbitrary (rather than polynomial) growth function, but one does not (of course) need polynomial complexity bounds. A variant of \cite[Theorem 1.19]{green-tao-nilratner} was also given in \cite[Theorem 4.2]{green-tao-ziegler-u4inverse}.

\begin{lemma}[Complete factorisation]\label{factor}   Let $(G/\Gamma,G_\bullet)$ be a degree $\leq s$ filtered nilmanifold of complexity $\leq M_0$, and let $g \in \poly(\Z, G_\bullet)$.  For any growth function $\Grow'$, we can find a quantity $M_0 \leq M \leq O_{M,\Grow'}(1)$ and a factorisation $g = \beta g' \gamma$ where:

$\beta \in \poly(\Z,G_\bullet)$ is $(O_{M}(1),N)$-smooth;

$g' \in \poly(\Z,G_\bullet)$ is $(\Grow'(M),N)$-irrational in a subnilmanifold $(G'/\Gamma',G'_\bullet)$ of $(G/\Gamma,G_\bullet)$ of complexity $O_M(1)$, and

$\gamma \in \poly(\Z,G_\bullet)$ is $O_M(1)$-periodic.
\end{lemma}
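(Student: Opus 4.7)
The plan is to iterate the Initial factorisation (Lemma \ref{init}) until the Irrationality alternative occurs. Setting $g^{(0)} := g$ and $(G^{(0)}/\Gamma^{(0)}, G^{(0)}_\bullet) := (G/\Gamma, G_\bullet)$, at step $i \geq 0$ I apply Lemma \ref{init} to $g^{(i)}$ in the current subnilmanifold with a parameter $A^{(i)}$ to be chosen below. Either $g^{(i)}$ is $(A^{(i)},N)$-irrational and the iteration halts, or the Dimension reduction alternative produces a factorisation $g^{(i)} = \beta^{(i+1)} g^{(i+1)} \gamma^{(i+1)}$ with $g^{(i+1)}$ lying in a subnilmanifold of strictly smaller total dimension. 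Since the total dimension is initially bounded by $\sum_{i=0}^s \dim(G_{(i)}) \leq (s+1)\dim(G) = O_{M_0}(1)$ and strictly decreases at every non-terminating step, the process must halt within $K := (s+1)M_0$ iterations.

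To ensure the final irrationality parameter dominates $\Grow'$ evaluated at the \emph{eventual} complexity $M$ (a mild circularity, since $M$ is not known in advance), I use a device analogous to the one used in the proof of Proposition \ref{semi-reg}. Let $C(\cdot,\cdot)$ denote a bound for the complexity blow-up in Lemma \ref{init}, and choose an auxiliary growth function $\tilde\Grow \geq \Grow'$ depending on $\Grow'$, $s$, and $M_0$. Set $\tilde M_0 := M_0$, $\tilde M_{i+1} := C(\tilde M_i, \tilde\Grow(\tilde M_i))$, and take $A^{(i)} := \tilde\Grow(\tilde M_i)$. An easy induction shows that the complexity of the $i$th subnilmanifold is at most $\tilde M_i$, so if the iteration halts at step $j \leq K$ the resulting complexity $M := \tilde M_j$ satisfies $M \leq \tilde M_K = O_{M_0, \Grow'}(1)$, and $g' := g^{(j)}$ is automatically $(\Grow'(M), N)$-irrational in $(G^{(j)}/\Gamma^{(j)}, G^{(j)}_\bullet)$ because $A^{(j)} = \tilde\Grow(\tilde M_j) \geq \Grow'(M)$.

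Concatenating the factorisations produced by the iteration yields
\[ g = \beta^{(1)} \beta^{(2)} \cdots \beta^{(j)} \, g' \, \gamma^{(j)} \cdots \gamma^{(2)} \gamma^{(1)}. \]
Each $\beta^{(i)}$ and $\gamma^{(i)}$ lies in $\poly(\Z, G^{(i-1)}_\bullet) \subseteq \poly(\Z, G_\bullet)$, since each subnilmanifold is a filtered rational subnilmanifold of its predecessor, so by Theorem \ref{ll-thm} the products $\beta := \beta^{(1)} \cdots \beta^{(j)}$ and $\gamma := \gamma^{(j)} \cdots \gamma^{(1)}$ are polynomial sequences in $G_\bullet$. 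It then remains to verify that $\beta$ is $(O_M(1), N)$-smooth and that $\gamma$ is $O_M(1)$-periodic; this follows from the stability of the notions of smooth and rational (periodic) polynomial sequences under products of a bounded number $j = O_{M_0}(1)$ of factors, combined with the observation that the inclusion of each subnilmanifold into the ambient one is itself controlled by $M$.

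The main obstacle I anticipate is precisely this final bookkeeping: smoothness and periodicity are quantitative notions measured relative to the Mal'cev basis of the ambient nilmanifold, and one must check that they are preserved when one passes from a factor $\beta^{(i)}$ living in a smaller subnilmanifold to an element of $\poly(\Z, G_\bullet)$, and again when one takes products across these different subnilmanifolds. The quantitative Baker--Campbell--Hausdorff calculations needed to control products in Mal'cev coordinates, together with the uniform rationality of each inclusion $(G^{(i)}/\Gamma^{(i)}, G^{(i)}_\bullet) \hookrightarrow (G^{(i-1)}/\Gamma^{(i-1)}, G^{(i-1)}_\bullet)$ built into the definition of a subnilmanifold of complexity $\leq M$, should suffice, essentially by the same arguments already used in \cite{green-tao-nilratner} for the proof of the analogous equidistribution factorisation theorem.
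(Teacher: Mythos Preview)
Your proposal is correct and follows essentially the same iterative dimension-reduction argument as the paper's own proof. The bookkeeping concern you flag about products of smooth and rational sequences across subnilmanifolds is precisely what Lemma~\ref{prod-smooth} (imported from \cite{green-tao-nilratner}) handles, and the paper simply invokes it at the corresponding step rather than reproving it.
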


\begin{proof}  We use an iterative argument, setting $\beta =  \gamma = \id$, $g'=g$, $M= M_0$, and $(G'/\Gamma',G'_\bullet) =(G/\Gamma,G_\bullet)$ to begin with.  In particular, $(G',\Gamma',G'_\bullet)$ is initially a subnilmanifold of $(G/\Gamma,G_\bullet)$ of complexity $O_{M}(1)$.
If $g'$ is $\Grow'(M)$-equidistributed in $(G'/\Gamma',G'_\bullet)$ then we are done; otherwise, by Lemma \ref{init} we may factorise $g' = \beta' g'' \gamma'$ where $c'$ is $(O_{\Grow'(M)}(1),N)$-smooth, $\gamma$ is $O_{\Grow'(M)}(1)$-periodic, and $g''$ now takes values in a subnilmanifold $(G''/\Gamma'',G''_\bullet)$ of $(G'/\Gamma',$ $G'_\bullet)$ of complexity $O_{\Grow'(M)}(1)$ and smaller total dimension than $(G'/\Gamma',G'_\bullet)$.  We then replace $\beta$ by $\beta\beta'$, $\gamma$ by $\gamma' \gamma$, $g'$ by $g''$, $(G'/\Gamma',G'_\bullet)$ by $(G''/\Gamma'',G''_\bullet)$, and increase $M$ to a quantity of the form $O_{\Grow'(M)}(1)$, using Lemma \ref{prod-smooth} to conclude that the new $\beta$ is smooth and the new $\gamma$ is rational.  We then iterate this process.  Since the total dimension of $(G/\Gamma,G_\bullet)$ is initially $O_{M_0}(1)$, this process can iterate at most $O_{M_0}(1)$ times, and the claim follows.
\end{proof}

With this lemma we can now establish Proposition \ref{totalis} and hence Theorem \ref{strong-reg}.  Let $\Grow'$ be a rapid growth function (depending on $\eps,M_0,\Grow$) to be chosen later.  We apply Lemma \ref{factor}, obtaining some $M$ with $M_0 \leq M \leq O_{M_0,\Grow'}(1)$ and a factorisation
$$ \psi(n) = F( \beta(n) g'(n) \gamma(n) \Gamma )$$
with $\beta,g'$ and $\gamma$ having the properties described in that lemma.

The sequence $\gamma$ is $O_{M}(1)$-rational and so, by Lemma \ref{prod-smooth}, the orbit $n \mapsto \gamma(n) \Gamma$ is periodic with some period $q = O_{M}(1)$, and thus $\gamma(n) \Gamma$ depends only on $n \mod q$.  

For each $n$, the rationality of $\gamma(n)$ ensures that $\gamma(n) \Gamma$ intersects $\Gamma$ in a subgroup of $\Gamma$ of index $O_M(1)$.  Since there are only $O_M(1)$ different possible values of $\gamma(n) \Gamma$, we may thus find a subgroup $\Gamma'$ of $\Gamma$ of index $O_M(1)$ such that $\Gamma' \subseteq \gamma(n) \Gamma$ for all $n$.

We can thus express $\psi$ as a virtual nilsequence
$$ \psi(n) = \tilde F( g'(n) \Gamma', n \mod q, n/N )$$
where $\tilde F: G/\Gamma' \times \Z/q\Z \times \R$ is defined by the formula
$$ \tilde F( x, a, y ) := F( \beta(Ny) \tilde x \gamma(\tilde a) \Gamma )$$ whenever $y \in \frac{1}{N}\Z$ and by Lipschitz extension to all $y \in \R$.
where $\tilde a$ is any integer with $\tilde a =a \mod q$, and $\tilde x$ is any element of $G$ such that $\tilde x\Gamma' = x$.  One easily verifies that $\tilde F$ is well-defined and has a Lipschitz norm of $O_M(1)$.  Also, since $g'$ was already $(\Grow(M),N)$-irrational in $G/\Gamma$, and $\Gamma'$ has index $O_M(1)$ in $\Gamma$, we see that $g'$ is $(\gg_M \Grow(M),N)$-irrational in $G/\Gamma'$.  Proposition \ref{totalis} now follows by replacing $M$ by a suitable quantity of the form $O_{M}(1)$, and choosing $\Grow'$ sufficiently rapidly growing depending on $\Grow$.

\section{Proof of the counting lemma}\label{counting-sec}

The purpose of this section is to prove the counting lemma, Theorem \ref{count-lem}. We begin by recalling from the introduction the definition of the Leibman group $G^{\Psi}$.

\begin{definition}[The Leibman group]  Let $\Psi = (\psi_1,\ldots,\psi_t)$ be a system of linear forms $\psi_1,\ldots,\psi_t: \Z^D \to \Z$.  For any $i \geq 1$, define $\Psi^{[i]}$ to be the linear subspace of $\R^t$ spanned by the  $(\psi_1^i(\textbf{n}),\ldots,\psi_t^i(\textbf{n}))$, as $\textbf{n}$ ranges over $\Z^D$. Given a filtered nilmanifold $(G/\Gamma, G_\bullet)$, we define the \emph{Leibman group} $G^{\Psi} \lhd G^t$ to be the Lie subgroup of $G^t$ generated by the elements $g_i^{\vec{v}_i}$ for $i \geq 1$, $g_i \in G_{(i)}$, and $\vec{v}_i \in {\Psi}^{[i]}$, with the convention that if $\vec{v} = (v_1,\dots,v_t)$ then
$$ g^{\vec{v}} := (g^{v_1},\ldots,g^{v_t}).$$ 
\end{definition}

Now might be a good time to remark explicitly that we have introduced a slightly vulgar convention that we hope will help the reader follow this section and other parts of the paper. Bold font letters such as $\mathbf{n} \in \R^D$ denote $D$-dimensional vectors, whilst arrows such as $\vec{v} \in \R^t$ denote $t$-vectors. Occasionally we shall write $m_i := \dim (\Psi^{[i]})$.

When reading this section, it might be found helpful to have a running example in mind. We will take as an illustrative example the case $D = 2$, $t = 4$ and $\Psi = (\psi_1,\dots,\psi_4)$, where $\psi_i(\mathbf{n}) = n_1 + in_2$ for $i = 0,1,2,3$. The system $\Psi$, of course, defines a $4$-term arithmetic progression. As we remarked in the introduction the corresponding Leibman group $G^{\Psi}$ is also known as the \emph{Hall-Petresco group} $\HP^4(G)$. The reader will easily confirm that in this case we have
\[ \Psi^{[1]} = \R (1,1,1,1) \oplus \R (0,1,2,3)\] and
\[ \Psi^{[2]} = \R(1,1,1,1) \oplus \R (0,1,2,3) \oplus \R(0,0,1,3)\]
and
\[ \Psi^{[3]} = \R(1,1,1,1) \oplus \R (0,1,2,3) \oplus \R(0,0,1,3) + \R(0,0,0,1) = \R^4.\]
Note that the flag property \eqref{flag-property} is satisfied; this also follows from the fact that $\Psi$ is translation-invariant, as we shall explain below.

Some work must be done before we can describe $G^{\Psi} = \HP^4(G)$ in a pleasant way. However we can already establish the following lemma, whose statement and proof go some way towards explaining the introduction of the Leibman group.

\begin{lemma}\label{motiv-lemma}
Let $\Psi = (\psi_1,\dots,\psi_t)$ be a system of linear forms $\psi_1,\ldots,\psi_t: \Z^D \to \Z$ satisfying the flag property. Suppose that $(G/\Gamma, G_{\bullet})$ is a filtered nilmanifold and that $g \in \poly(\Z,G_{\bullet})$ is a polynomial sequence. Then the sequence $g^{\Psi} : \Z^D \rightarrow G^t$ defined by $g^{\Psi}(\mathbf{n}) := (g(\psi_1(\mathbf{n})),\dots, g(\psi_t(\mathbf{n})))$ takes values in $G^{\Psi}$.
\end{lemma}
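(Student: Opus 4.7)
The plan is to combine the Taylor expansion for polynomial sequences adapted to a filtered nilmanifold (Lemma \ref{taylor-lem}, already invoked in the proof of Lemma \ref{init}) with the elementary fact that the group operation on $G^t$ is componentwise. By the Taylor expansion, we may write
\[ g(m) = g_0\, g_1^{\binom{m}{1}} g_2^{\binom{m}{2}} \cdots g_s^{\binom{m}{s}}, \]
with uniquely determined Taylor coefficients $g_i \in G_{(i)}$. Substituting $m = \psi_j(\mathbf{n})$ in the $j$-th coordinate and keeping the factor ordering fixed across all coordinates, componentwise multiplication in $G^t$ yields the clean factorisation
\[ g^\Psi(\mathbf{n}) \;=\; \prod_{i=0}^s g_i^{\vec{w}_i(\mathbf{n})}, \qquad \vec{w}_i(\mathbf{n}) \;:=\; \Bigl(\tbinom{\psi_1(\mathbf{n})}{i},\dots,\tbinom{\psi_t(\mathbf{n})}{i}\Bigr) \in \R^t. \]

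It then suffices to check that each factor $g_i^{\vec{w}_i(\mathbf{n})}$ lies in $G^\Psi$. Here the key elementary observation is that for each $i \geq 1$, the falling factorial $\binom{m}{i} = m(m-1)\cdots(m-i+1)/i!$ is a polynomial in $m$ of degree exactly $i$ with vanishing constant term, hence is a $\Q$-linear combination of the monomials $m, m^2, \dots, m^i$. Applying this coordinatewise with $m = \psi_j(\mathbf{n})$ expresses $\vec{w}_i(\mathbf{n})$ as a rational linear combination of the vectors $(\psi_1^k(\mathbf{n}),\dots,\psi_t^k(\mathbf{n}))$ for $1 \leq k \leq i$, each of which lies in $\Psi^{[i]}$ by the very definition of that subspace. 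Thus $\vec{w}_i(\mathbf{n}) \in \Psi^{[i]}$, and the factor $g_i^{\vec{w}_i(\mathbf{n})}$ is one of the generators of $G^\Psi$.

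The only remaining term is the one with $i=0$, which equals $g_0^{(1,\dots,1)} = (g(0),\dots,g(0)) = g(0)^\Delta$. For the systems of principal interest here (such as the arithmetic progression forms \eqref{hp-lattice} and the parallelepiped forms \eqref{hk-lattice}), the constant diagonal vector $(1,\dots,1)$ is visibly in $\Psi^{[1]}$, obtained by evaluating $(\psi_1(\mathbf{n}),\dots,\psi_t(\mathbf{n}))$ at a suitable $\mathbf{n}$, so that $g(0)^\Delta = g(0)^{(1,\dots,1)}$ is itself an $i=1$ generator of $G^\Psi$ (using $g(0) \in G = G_{(1)}$); this is consistent with the alternative description of the Hall--Petresco group $\HP^k(G_\bullet)$ recorded in the discussion preceding the lemma. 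More generally, one can dispose of this term by passing to the shifted sequence $\tilde g := g(0)^{-1} g$, which belongs to $\poly(\Z, G_\bullet)$ by Theorem \ref{ll-thm} and satisfies $\tilde g(0) = \id$, so that the above argument applies to $\tilde g$ without an $i=0$ contribution and gives $g^\Psi(\mathbf{n}) \in g(0)^\Delta G^\Psi$; this is precisely the form in which the lemma is used in the statement of the counting lemma (Theorem \ref{count-lem}), where the relevant coset is $g(0)^\Delta G^\Psi/\Gamma^\Psi$.

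I do not foresee a serious obstacle: once the Taylor expansion is in place, the proof is essentially a bookkeeping argument, and the only thing that really has to be verified is the degree-and-vanishing property of the binomial coefficients $\binom{m}{i}$, which is standard. The minor subtlety to keep track of is simply that the factors in the Taylor expansion must be written in the same order across all $t$ coordinates before the componentwise product structure can be exploited --- but since the expansion is a fixed factorisation in $G$, this is automatic.
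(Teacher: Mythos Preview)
Your approach is essentially identical to the paper's: Taylor-expand $g$, substitute $\psi_j(\mathbf{n})$ coordinatewise, and observe that each factor $g_i^{\vec w_i(\mathbf{n})}$ is a generator of $G^\Psi$ because $\vec w_i(\mathbf{n})\in\Psi^{[i]}$. The paper compresses all of this into one sentence (``it is immediate from the definition that each element in this product lies in $G^\Psi$''), whereas you spell out the elementary reason $\binom{m}{i}$ lies in the span of $m,\dots,m^i$.

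You are in fact more careful than the paper on one point: the $i=0$ factor $g_0^{(1,\dots,1)}$. The paper's one-line proof implicitly treats this as a generator, which requires $(1,\dots,1)\in\Psi^{[1]}$; that holds for the systems actually used (progressions, parallelepipeds) but not for an arbitrary collection of linear forms. Your observation that one always lands in the coset $g(0)^\Delta G^\Psi$, and that this is exactly what Theorem~\ref{count-lem} uses (and that the proof of the counting lemma immediately normalises $g(0)=\id$ anyway), is the right way to tie off this loose end.
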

\begin{proof} The sequence $g(n)$ has a (unique) Taylor expansion 
\[ g(n) = g_0 g_1^{\binom{n}{1}} \dots g_s^{\binom{n}{s}}\] with $g_i \in G_{(i)}$ for all $i$ (see Lemma \ref{taylor-lem}). Substituting in, it follows that 
\[ g^{\Psi}(\mathbf{n}) = \prod_{i=0}^s g_i^{(\binom{\psi_1(\mathbf{n})}{i},\dots,\binom{\psi_t(\mathbf{n})}{i})},\] and it is immediate from the definition and the flag property that each element in this product lies in $G^{\Psi}$.\end{proof}

The counting lemma, whose proof is the main objective of this section, was stated as Theorem \ref{count-lem}. Essentially, it states that $g^{\Psi}(\mathbf{n})\Gamma^{\Psi}$ is equidistributed in $G^{\Psi}/\Gamma^{\Psi}$ as $\mathbf{n}$ ranges over ``nice'' subsets of ``big'' lattices, provided that the original sequence $g$ is suitably irrational. We will recall what that means in due course, but our first task is to develop the basic theory of the Leibman group $G^{\Psi}$. At the moment, for example, we have not established that $G^{\Psi}$ is a connected Lie subgroup of $G^t$ or that $G^{\Psi}/\Gamma^{\Psi}$ has the structure of a filtered nilmanifold. Nor have we developed tools for calculating inside this group.\vspace{11pt}

\textsc{Basic facts about the Leibman group and nilmanifold.} We can endow $\R^t$ with the structure of a commutative algebra over $\R$ by using the pointwise product
$$ \vec{x}.\vec{y} = (x_1 y_1,\ldots,x_t y_t)$$
and setting $\vec{1} = (1,\ldots,1)$ to be the multiplicative identity.
With this algebra structure, one can view the spaces ${\Psi}^{[i]}$ defined in Definition \ref{leib-gp} as the span of the powers $\Psi(\mathbf{n})^i$ for $\mathbf{n} \in \Z^D$, where we view $\Psi$ as a homomorphism from $\Z^D$ to $\Z^t$. 

By a standard depolarisation argument we have the following important fact.
\begin{lemma}\label{depol}  ${\Psi}^{[i]}$ contains all products $\Psi(\mathbf{n}_1) \ldots \Psi(\mathbf{n}_i)$, $\mathbf{n}_1,\ldots,\mathbf{n}_i \in \Z^D$.
\end{lemma}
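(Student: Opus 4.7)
The plan is to prove the two inclusions separately, the easy one by diagonalising, and the harder one by the standard polarisation identity.

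First I would observe that the powers $\Psi(\mathbf{n})^j = (\psi_1(\mathbf{n})^j,\ldots,\psi_t(\mathbf{n})^j)$ lie in the asserted span of products (take $\mathbf{n}_1 = \ldots = \mathbf{n}_j = \mathbf{n}$), so by definition of $\Psi^{[i]}$ we have $\Psi^{[i]} \subseteq \operatorname{span}\{\Psi(\mathbf{n}_1)\cdots\Psi(\mathbf{n}_j):1\le j\le i,\ \mathbf{n}_1,\ldots,\mathbf{n}_j\in\Z^D\}$. This inclusion is immediate.

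For the reverse inclusion, fix $1 \leq j \leq i$ and $\mathbf{n}_1,\ldots,\mathbf{n}_j \in \Z^D$. The polynomial map $x \mapsto x^j$ on the commutative algebra $\R^t$ is associated to the symmetric $j$-linear form $(x_1,\ldots,x_j)\mapsto x_1\cdots x_j$, and these are related by the polarisation identity
$$ x_1 \cdots x_j \;=\; \frac{1}{j!}\sum_{S \subseteq \{1,\ldots,j\}} (-1)^{j-|S|}\Bigl(\sum_{k\in S} x_k\Bigr)^{j},$$
which holds in any commutative $\Q$-algebra. Applying this with $x_k = \Psi(\mathbf{n}_k)$ and using that $\Psi: \Z^D \to \Z^t$ is a group homomorphism (so $\sum_{k\in S} \Psi(\mathbf{n}_k) = \Psi(\sum_{k\in S}\mathbf{n}_k)$) gives
$$ \Psi(\mathbf{n}_1)\cdots\Psi(\mathbf{n}_j) \;=\; \frac{1}{j!}\sum_{S \subseteq \{1,\ldots,j\}} (-1)^{j-|S|}\,\Psi\Bigl(\sum_{k\in S}\mathbf{n}_k\Bigr)^{j}.$$
The right-hand side is a linear combination of $j$-th powers of values of $\Psi$, and since $j \leq i$ each such power lies in $\Psi^{[i]}$ by definition. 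Hence $\Psi(\mathbf{n}_1)\cdots\Psi(\mathbf{n}_j) \in \Psi^{[i]}$, proving the reverse inclusion.

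There is no real obstacle here; the only thing to be careful about is verifying the polarisation identity, which is purely combinatorial (count, for each multi-index $\alpha$ with $|\alpha|=j$, the signed contribution $\sum_{S}(-1)^{j-|S|}\binom{j}{\alpha}\prod_k [k\in S]^{\alpha_k}$, noting that a factor of the form $[k\in S]^{\alpha_k}$ is $1$ if $k\in S$ or $\alpha_k=0$ and vanishes otherwise; the sum over $S$ then produces an inclusion-exclusion that is nonzero only for the fully supported $\alpha=(1,1,\ldots,1)$).
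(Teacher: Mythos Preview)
Your proof is correct and is essentially identical to the paper's: both directions are handled the same way, and the polarisation identity you use is exactly the one the paper invokes (written there over $\omega \in \{0,1\}^j$ rather than subsets $S$, with the equivalent sign $\frac{(-1)^j}{j!}(-1)^{|\omega|}$). The paper simply calls the identity ``elementary'' without the verification sketch you provide at the end.
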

\begin{proof}  Observe the elementary depolarisation identity
$$ \Psi(\mathbf{n}_1) \ldots \Psi(\mathbf{n}_i) = \frac{(-1)^i}{i!} \sum_{\omega \in \{0,1\}^i} (-1)^{|\omega|} \Psi(\omega_1 \mathbf{n}_1 + \ldots + \omega_i \mathbf{n}_i)^i$$
where $\omega = (\omega_1,\ldots,\omega_i)$ and $|\omega| := \omega_1+\ldots+\omega_i$. The claim follows.
\end{proof}

This has several useful consequences. 

\begin{corollary}\label{filtr} Let $\Psi = (\psi_1,\dots, \psi_t)$ be a system of linear forms from $\Z^D$ to $\Z$. Then
\begin{enumerate}
\item for any $i,j \geq 0$, we have $\Psi^{[i]} \cdot \Psi^{[j]} \subseteq \Psi^{[i+j]}$;
\item If $\Psi$ is translation-invariant, then it has the flag property \eqref{flag-property};
\item If $\Psi^{[i]} = \R^t$ for some $i$, then $\Psi^{[i+1]} = \Psi^{[i+2]} = \dots = \R^t$.
\end{enumerate}
\end{corollary}
\begin{proof}
(i) is immediate from Lemma \ref{depol}. (ii) is immediate from (i): if $\Psi$ is translation invariant then, by definition, $\Psi^{[1]}$ contains $\vec{1} = (1,1,\dots, 1)$, and therefore $\Psi^{[i]} = \Psi^{[i]} \cdot \vec{1} \subseteq \Psi^{[i]} \cdot \Psi^{[1]} \subseteq \Psi^{[i+1]}$, for all $i$. Finally, we turn to (iii). Suppose that $\Psi^{[i+1]} \neq \R^t$. Then there is some nonzero vector $(c_1,\dots, c_t)$ which annihilates $\Psi^{[i+1]}$, so by Lemma \ref{depol} and (i) we have
\begin{equation}\label{lin-rel} \sum_{j = 1}^t c_j \psi_j^i(\mathbf{n}) \psi_j(\mathbf{m}) = 0\end{equation} for all $\mathbf{m}, \mathbf{n} \in \Z^D$. Since none of the forms $\psi_j$ is zero\footnote{this is part of the definition of a system $\Psi$, but in any case item (iii) of the corollary is vacuous without this assumption.} and a finite collection of hyperplanes cannot cover $\Z^D$, we may choose $\mathbf{m}$ such that none of the $\psi_j(\mathbf{m})$ is zero. But then \eqref{lin-rel} gives a linear relation between the $\psi_j^i$, and so $\Psi^{[i]} \neq \R^t$.
\end{proof}

Let $(G/\Gamma, G_\bullet)$ be a degree $\leq s$ filtered nilmanifold.
From Definition \ref{leib-gp}, the Leibman group $G^{\Psi}$ is the subgroup of $G^t$ generated by the group elements $g_i^{v_i}$ for $i \geq 1$, $v_i \in {\Psi}^{[i]}$, and $g_i \in G_{(i)}$.  For any $i_0 \geq 1$, let $G_{(i_0)}^{\Psi}$ be the subgroup of $G^{\Psi}$ generated by those $g_i^{\vec v_i}$ with $i \geq i_0$, $\vec v_i \in {\Psi}^{[i]}$, $g_i \in G_{(i)}$, with the convention that $G_{(0)}^{\Psi} := G^{\Psi}$.  

\begin{lemma}[Filtration property for $G_\bullet^\Psi$]\label{gij-lemma}
$G_\bullet^{\Psi} := (G_{(i)}^{\Psi})_{i=0}^\infty$ is a filtration on $G^{\Psi}$.   In other words, the $G_{(i)}^\Psi$ are nested with $[G_{(i)}^\Psi,G_{(j)}^\Psi] \subset G_{(i+j)}^\Psi$ for all $i, j\geq 0$.
\end{lemma}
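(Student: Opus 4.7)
The plan is to reduce the statement to an analogous one at the level of Lie algebras, where the computation becomes completely transparent. The nesting $G^{\Psi}_{(i+1)} \subseteq G^{\Psi}_{(i)}$ is immediate from the definition, since every generator of the former is already a generator of the latter.

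For the commutator inclusion, I pass to the Lie algebra $\mathfrak{g}^t$ of $G^t$. Writing $\mathfrak{g}_{(k)}$ for the Lie algebra of $G_{(k)}$ and setting $X \otimes \vec v := (v_1 X, \ldots, v_t X) \in \mathfrak{g}^t$ for $X \in \mathfrak{g}$ and $\vec v = (v_1,\ldots,v_t) \in \R^t$, I note that $\exp(X \otimes \vec v) = g^{\vec v}$ where $g = \exp(X)$. Let $\mathfrak{h}_{(i)} \subseteq \mathfrak{g}^t$ denote the $\R$-linear span of the elements $X \otimes \vec v$ with $X \in \mathfrak{g}_{(k)}$, $\vec v \in \Psi^{[k]}$, $k \geq i$. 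A componentwise computation yields
\[ [X \otimes \vec v,\; Y \otimes \vec w] \;=\; [X,Y] \otimes (\vec v \cdot \vec w). \]
Combining the filtration property $[\mathfrak{g}_{(k)}, \mathfrak{g}_{(l)}] \subseteq \mathfrak{g}_{(k+l)}$ with Corollary \ref{filtr} (which gives $\vec v \cdot \vec w \in \Psi^{[k+l]}$) and bilinearity, I obtain $[\mathfrak{h}_{(i)}, \mathfrak{h}_{(j)}] \subseteq \mathfrak{h}_{(i+j)}$; in particular each $\mathfrak{h}_{(i)}$ is a Lie subalgebra of $\mathfrak{g}^t$.

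Next I transfer this inclusion to the group via the identification $G^{\Psi}_{(i)} = \exp(\mathfrak{h}_{(i)})$. Since $G^t$ is connected, simply connected, and nilpotent, the Baker--Campbell--Hausdorff formula terminates and $\exp$ is a diffeomorphism from $\mathfrak{g}^t$ onto $G^t$, under which Lie subalgebras correspond bijectively to closed connected Lie subgroups. The inclusion $G^{\Psi}_{(i)} \subseteq \exp(\mathfrak{h}_{(i)})$ is immediate, since each generator $g_k^{\vec v}$ of $G^{\Psi}_{(i)}$ is the exponential of the pure element $\log(g_k) \otimes \vec v \in \mathfrak{h}_{(i)}$, and BCH together with closure of $\mathfrak{h}_{(i)}$ under brackets ensures products of such exponentials remain in $\exp(\mathfrak{h}_{(i)})$. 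For the reverse inclusion, an arbitrary $Z \in \mathfrak{h}_{(i)}$ is a finite linear combination $Z = \sum_\alpha X_\alpha \otimes \vec v_\alpha$ of pure elements, and running BCH in reverse expresses $\exp(Z)$ as a product of the pure exponentials $\exp(X_\alpha \otimes \vec v_\alpha)$ together with correction factors coming from iterated brackets; by the computation above each such bracket is again a linear combination of pure elements of the required type, and so its exponential also belongs to $G^{\Psi}_{(i)}$.

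With the identification in hand, the desired commutator inclusion is immediate. If $g = \exp(X) \in G^{\Psi}_{(i)}$ and $h = \exp(Y) \in G^{\Psi}_{(j)}$ with $X \in \mathfrak{h}_{(i)}$ and $Y \in \mathfrak{h}_{(j)}$, then BCH writes $[g,h] = \exp(W)$, where $W$ is a finite sum of iterated Lie brackets of $X$ and $Y$ each containing at least one copy of each variable; the filtration $[\mathfrak{h}_{(i)}, \mathfrak{h}_{(j)}] \subseteq \mathfrak{h}_{(i+j)}$ together with nesting then forces $W \in \mathfrak{h}_{(i+j)}$, whence $[g,h] \in G^{\Psi}_{(i+j)}$. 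The main technical nuisance is the identification $G^{\Psi}_{(i)} = \exp(\mathfrak{h}_{(i)})$---the generators of the former are only the ``pure'' exponentials, so one has to use BCH to reach the rest of the latter---but this is routine in the simply connected nilpotent setting.
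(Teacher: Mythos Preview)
Your proof is correct and rests on the same three ingredients as the paper's: the Baker--Campbell--Hausdorff formula, the filtration property $[G_{(i)},G_{(j)}]\subseteq G_{(i+j)}$, and Corollary~\ref{filtr}. The paper works directly at the group level, applying the commutator formula \eqref{comm-form} componentwise to generators $g_i^{\vec v_i}$, $g_j^{\vec v_j}$, whereas you pass to the Lie algebra first and then transfer back via the identification $G^\Psi_{(i)}=\exp(\mathfrak h_{(i)})$; this is a presentational difference rather than a substantive one.
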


\begin{proof} It suffices to check that if $g_i \in G_{(i)}$, $g_j \in G_{(j)}$, $\vec{v}_i = (v_{i1},\dots, v_{it}) \in \Psi^{[i]}$ and $\vec{v}_j = (v_{j1},\dots,v_{jt}) \in \Psi^{[j]}$ then $[g_i^{\vec{v}_i}, g_j^{\vec{v}_j}] \in G_{(i+j)}^{\Psi}$. But this follows from the Baker-Campbell-Hausdorff formula (see \eqref{comm-form}), the filtration property of $G_{(i)}$ and Corollary \ref{filtr}.\end{proof}

Suppose we have a system $\Psi$ satisfying the flag property. The subspaces $\Psi^{[i]}$ are rational (i.e. they can be defined over $\Q$).  From a greedy algorithm (and clearing denominators) we may thus find a basis $\vec{v}_1,\ldots,\vec{v}_{m_s} \in \Psi^{[s]}$ with the following properties:

(Integrality) $\vec{v}_1,\ldots,\vec{v}_{m_s}$ all lie in $\Z^t$;

(Partial span) For every $1 \leq i \leq s$, $\vec{v}_1,\ldots,\vec{v}_{m_i}$ span $\Psi^{[i]}$;

(Row echelon form)  For each $1 \leq j \leq m_s$, there exists $l_j$, $1 \leq l_j \leq t$, such that $\vec{v}_j$ has a non-zero $l_j$ coordinate, but such that $\vec{v}_{j'}$ has a zero $l_j$ coordinate for all $j < j' \leq m_s$.

For instance, the basis 
$$ \vec v_1 := (1,1,1,1); \quad \vec v_2 := (0,1,2,3); \quad \vec v_3 := (0,0,1,3); \quad \vec v_4 := (0,0,0,1)$$
we implicitly gave above for our running example is already in this form.

Fix such a basis.  For each basis element $\vec{v}_j$, we can define the \emph{degree} $\deg(\vec{v}_j)$ of that element to be the first $i$ for which $j \leq m_i$, thus $\deg(\vec{v}_j)$ is an integer between $1$ and $s$, and $\vec{v}_j \in \Psi^{[\deg(\vec{v}_j)]}$.

Observe that an arbitrary element of $G^\Psi$ can be expressed as a product of finitely many elements of the form $g_j^{\vec v_j}$ for $0 \leq j \leq m_s$ and $g_j \in G_{(\deg(\vec{v}_j))}$.
By many applications\footnote{Indeed, one uses \eqref{bch-2} and Lemma \ref{gij-lemma} to extract out and collects all terms with degree $\deg(\vec{v}_j)= 1$, leaving only terms with base $g_j$ in $G_{(2)}$.  Then one extracts out those terms with degree $2$ (merging them with the $i=1$ terms as necessary), leaving only terms with base in $G_{(3)}$.  Continuing this process gives the desired factorisation.}
of the Baker-Campbell-Hausdorff formula (see \eqref{bch-2}) and Lemma \ref{gij-lemma}, we can now express any element of $G^{\Psi}$ in the form
\begin{equation}\label{go}
\prod_{j=1}^{m_s} g_j^{\vec{v}_j}
\end{equation}
where $g_j \in G_{(\deg(\vec{v}_j))}$ for all $1 \leq j \leq m_s$.

Thus, in our running example, we have the explicit description of $G^{\Psi} = \HP^4(G)$ as
\[ \{ (g_0, g_0 g_1, g_0 g_1^2 g_2, g_0 g_1^3 g_2^3 g_3) : g_0 \in G_{(0)}, g_1 \in G_{(1)}, g_2 \in G_{(2)}, g_3 \in G_{(3)}\}.\]
Note that from results on the Taylor expansion (see Lemma \ref{taylor-lem}) this group may also be identified as
\[ \{ (g(0), g(1), g(2), g(3)) : g \in \poly(\Z,G_{\bullet})\}.\]
The group nature of $\HP^4(G)$ is then easily deduced from Theorem \ref{ll-thm}, but this presentation is somewhat specific to the Hall-Petresco case and we shall not require it further.

From the row-echelon form one can verify inductively that the representation \eqref{go} is unique (this can be seen clearly by working with the Hall-Petresco example presented above). This gives $G^{\Psi}$ the structure of a connected, simply connected Lie group, with dimension
\begin{equation}\label{dimgo}
\dim(G^\Psi) = \sum_{i=1}^s \dim(G^{(i)}) (\dim(\Psi^{[i]}) - \dim(\Psi^{[i-1]})) 
\end{equation}
(with the convention that $\Psi^{[0]}$ is trivial).  A similar argument also shows that every element of $G^{\Psi}_{(i_0)}$ can be expressed uniquely in the form \eqref{go}, where now $g_j$ is constrained to lie in $G_{(\max(\deg(v_j),i_0))}$ rather than $G_{(\deg(v_j))}$.   In particular, by reading off the coefficients $g_{j}$ one at a time, this implies the pleasant identity
\begin{equation}\label{pleasant}
G^{\Psi}_{(i)} = G^{\Psi} \cap (G_{(i)})^k.
\end{equation}

\emph{Remark.} From Taylor expansion (see Lemma \ref{taylor-lem}) we see that the sequence $g^\Psi$ in \eqref{gpsi-def} lies in $\poly(\Z,G^\Psi_\bullet)$.  While we do not directly use this fact here, it may help explain why the filtration $G^\Psi_\bullet$ will plays a prominent role in the proof of the counting lemma that we will shortly come to.

Recall that we normalised the basis vectors $\vec{v}_{j} \in \Z^t$ to have integer coefficients.  As a consequence, we see that if the $g_{j}$ are in $\Gamma$, then the expression \eqref{go} lies in $\Gamma^k$. From this (and many applications of Lemma \ref{gij-lemma}) we see that $\Gamma^{\Psi}_{(i)} := \Gamma^k \cap G^{\Psi}_{(i)}$ is cocompact in $G^{\Psi}_{(i)}$ for each $i$, and so $(G^{\Psi}/\Gamma^{\Psi}, G^{\Psi}_\bullet)$ is a filtered nilmanifold.  Furthermore, the same argument shows that the $G^{\Psi}_{(i)}$ are rational subgroups of $G^k$ and so $(G^{\Psi}/\Gamma^{\Psi}, G^{\Psi}_\bullet)$ is a subnilmanifold of $(G^k/\Gamma^k, G^k_\bullet)$.\vspace{11pt}

\textsc{The counting lemma: preliminary man{\oe}uvres.} Now that we have verified that $G^{\Psi}/\Gamma^{\Psi}$ is indeed a nilmanifold, we can begin the proof of Theorem \ref{count-lem}. 

We begin with some easy reductions.  
First, observe that for fixed $M$, there are only finitely many possibilities for $s,D,t,\Psi$, and (up to isomorphism) there are only finitely many possibilities for $(G/\Gamma,G_\bullet)$ and $\Gamma$.  Thus it will suffice to establish the result for a single choice of $s,D,t,\Psi,(G/\Gamma, G_\bullet)$, with the bounds depending on these quantities.  Hence, we fix these quantities and allow all implicit constants to depend on these quantities (thus, in this section, we will not explicitly subscript out $O(1)$ quantities).

Similarly, because the space of Lipschitz functions with Lipschitz norm $O(1)$ is precompact in the uniform topology (by the Arzel\'a-Ascoli theorem), it suffices to prove the desired bound for each fixed $F$, as the uniformity in $F$ then follows from an easy approximation argument.  Thus we fix $F$ and allow all quantities to depend on $F$.

Next, we observe that we may normalise $g(0) = \id$.  Indeed, we may factorise $g(0) = c_0 \gamma_0$ where $d_G(c_0,\id) = O(1)$ and $\gamma_0 \in \Gamma$.  Factorising, we obtain
$$ g(n) = c_0 g'(n) \gamma_0$$
where $g'(n) := c_0\gamma_0 (\gamma_0^{-1}g(n)\gamma_0)$. Note that $g'(0) = \id$ and that Taylor coefficients of $g'$ are given by $g'_i = \gamma_0^{-1} g_i \gamma_0$, and so $g'$ is also $(A,N)$-irrational.  It is then an easy matter to see that Theorem \ref{count-lem} for $g$ and $F$ follows from Theorem \ref{count-lem} for $g'$ and for the shifted function $F'(x) := F(c_0 x)$, which is still Lipschitz with norm $O(1)$.  

Note that we may assume that $A$ and $N$ are large, as the claim is trivial otherwise.\vspace{11pt}

\textsc{Equidistribution in the Leibman group.} Let us recall what we are trying to prove. In the counting lemma, Theorem \ref{count-lem}, our aim is to show that if $g(n)$ is suitably irrational then the orbit $(g^{\psi}(\mathbf{n}))_{\mathbf{n} \in (\mathbf{n}_0 + \Lambda) \cap P}$ is equidistributed on the Leibman nilmanifold $G^{\Psi}/\Gamma^{\Psi}$. We shall proceed by contradiction, supposing this orbit is not equidistributed and deducing that $g(n)$ could not have been irrational. The reader should recall the definition of \emph{irrational} in this context: it is given in Definition \ref{irrat-def}.

Our main tool will be a mild generalisation of the ``multiparameter Leibman criterion'', which is \cite[Theorem 8.6]{green-tao-nilratner}. Here is the statement we shall use.

\begin{theorem}\label{mpet}
Suppose that $(G/\Gamma, G_{\bullet})$ is a filtered nilmanifold of complexity $\leq M$ and that $g \in \poly(\Z^D, G_{\bullet})$ is a polynomial sequence for some $D \leq M$. Suppose that $\Lambda \subseteq \Z^D$ is a lattice of index $\leq M$, that $\mathbf{n_0} \in \Z^D$ has magnitude $\leq M$, and that $P \subseteq [-N,N]^D$ is a convex body. Suppose that $\delta > 0$, and that 
$$ \big| \sum_{{\mathbf n} \in (\mathbf{n_0}+\Lambda) \cap P} F(g({\mathbf{n}})\Gamma) - \frac{\operatorname{vol}(P)}{[\Z^D:\Lambda]} \int_{G/\Gamma} F \big| > \delta N^D \|F\|_{\operatorname{Lip}}$$
for some Lipschitz function $F: G/\Gamma \to \C$.
Then there is a nontrivial homomorphism $\eta : G \rightarrow \R$ which maps $\Gamma$ to $\Z$, has complexity $O_M(1)$ and such that 
\[ \Vert \eta \circ g \Vert_{C^{\infty}([N]^D)} = O_{\delta,M}(1).\]
\end{theorem}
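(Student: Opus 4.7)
The plan is to reduce Theorem~\ref{mpet} to the special case $\Lambda = \Z^D$, $\mathbf{n}_0 = \mathbf{0}$, which is exactly \cite[Theorem 8.6]{green-tao-nilratner}. The reduction is just a linear change of variables on the index set, followed by a bookkeeping step to transfer the smoothness norm bound through that substitution.

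First I would choose a basis of $\Lambda$ realised as an integer matrix $A \in M_D(\Z)$ in Hermite normal form with $|\det A| = [\Z^D:\Lambda] \leq M$ and entries of size $O_M(1)$. Substituting $\mathbf{n} = \mathbf{n}_0 + A\mathbf{m}$, set
\[ \tilde g(\mathbf{m}) := g(\mathbf{n}_0 + A\mathbf{m}). \]
By the Lazard--Leibman theorem (Theorem~\ref{ll-thm}), which is closed under precomposition with affine-linear maps $\Z^D \to \Z^D$, the sequence $\tilde g$ still lies in $\poly(\Z^D, G_\bullet)$. The substitution bijects $(\mathbf{n}_0 + \Lambda) \cap P$ with $\tilde P \cap \Z^D$, where $\tilde P := A^{-1}(P - \mathbf{n}_0)$ is a convex body contained in $[-N',N']^D$ for some $N' \asymp_M N$, and $\operatorname{vol}(\tilde P) = \operatorname{vol}(P)/[\Z^D:\Lambda]$. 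Thus the hypothesis of Theorem~\ref{mpet} translates cleanly into a failure of equidistribution for $\tilde g$ on the convex body $\tilde P$ at scale $N'$ with parameter $\delta' \gg_M \delta$.

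Now I would apply \cite[Theorem~8.6]{green-tao-nilratner} to $\tilde g$ at scale $N'$. This yields a nontrivial horizontal character $\eta : G \to \R$ of complexity $O_M(1)$, vanishing on $\Gamma$, such that $\|\eta \circ \tilde g\|_{C^\infty([N']^D)} = O_{\delta, M}(1)$. To transfer this back to a bound on $\|\eta \circ g\|_{C^\infty([N]^D)}$, note that $(\eta \circ \tilde g)(\mathbf{m}) = (\eta \circ g)(\mathbf{n}_0 + A\mathbf{m})$, so the normalised Taylor coefficients of $\eta \circ g$ of degree $\leq s$ are obtained from those of $\eta \circ \tilde g$ via an invertible linear transformation whose entries depend only on $A$, $A^{-1}$, and $\mathbf{n}_0$, all of size $O_M(1)$. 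Tracking through the explicit combinatorial formula for affine substitution of a polynomial of degree $\leq s$ in the binomial basis, the induced map on the $N^{|\alpha|}$-weighted coefficients has $O_M(1)$ entries, giving $\|\eta \circ g\|_{C^\infty([N]^D)} \ll_M \|\eta \circ \tilde g\|_{C^\infty([N']^D)} = O_{\delta,M}(1)$, as required.

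The main obstacle I anticipate is precisely this last bookkeeping step: one must verify that the weights $N^{|\alpha|}$ built into the $C^\infty$-norm survive the change of scale from $N$ to $N'$ and the composition with $A, A^{-1}$ with only $M$-dependent constants and no $N$-dependent loss. This is routine given that $|A|, |A^{-1}|, |\mathbf{n}_0| = O_M(1)$ and that we are working only with polynomials of bounded degree $\leq s \leq M$, but it is the one genuine calculation in the argument; everything else is either a direct quotation of the single-lattice Leibman criterion or a consequence of Theorem~\ref{ll-thm}.
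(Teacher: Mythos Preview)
Your overall strategy --- reduce to \cite[Theorem 8.6]{green-tao-nilratner} via an affine change of variables on the index set --- is the same as the paper's, but there are two genuine gaps.

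First, \cite[Theorem 8.6]{green-tao-nilratner} is stated for boxes $[N]^D$, not for arbitrary convex bodies (this is one of the two extensions the paper explicitly flags as needing to be dealt with). After your substitution $\mathbf{n} = \mathbf{n}_0 + A\mathbf{m}$, the region $\tilde P = A^{-1}(P-\mathbf{n}_0)$ is still a general convex body, so you cannot invoke that theorem directly. The paper handles this (after its own change of variables) by subdividing the convex body into cubes of sidelength $\eps N$ for a small $\eps \gg_{\delta,M} 1$ and pigeonholing onto one cube; you would need the same step.

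Second, and more importantly, your final ``bookkeeping step'' is not routine in the way you describe, and the obstacle is not the one you identify. The $C^\infty$ norm is built from $\|\alpha_{\mathbf i}\|_{\R/\Z}$, not from $|\alpha_{\mathbf i}|$. Knowing that each Taylor coefficient of $\eta \circ g$ is an $O_M(1)$-bounded \emph{rational} linear combination (with denominators coming from $A^{-1}$, hence dividing $\det A$) of the coefficients of $\eta \circ \tilde g$ does \emph{not} let you conclude it is small modulo $1$: if $\|\beta\|_{\R/\Z}$ is small and $c \in \Q \setminus \Z$, there is no reason for $\|c\beta\|_{\R/\Z}$ to be small. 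What you \emph{can} conclude is that $\|Q\alpha_{\mathbf i}\|_{\R/\Z}$ is small for some integer $Q = O_M(1)$ clearing the denominators; the correct conclusion is thus a bound on $\|Q\eta \circ g\|_{C^\infty([N]^D)}$, after which one replaces $\eta$ by $Q\eta$. The paper does exactly this: it first passes to the scalar sublattice $q\Z^D \subseteq \Lambda$ by pigeonhole (rather than using a general matrix $A$), and after applying \cite[Theorem 8.6]{green-tao-nilratner} it invokes \cite[Lemma 8.4]{green-tao-nilratner} to go from $\|\eta \circ g(q\cdot + \mathbf{n}_3)\|_{C^\infty} \ll 1$ to $\|Q\eta \circ g\|_{C^\infty} \ll 1$ for some integer $Q = O(1)$. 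Your matrix version needs an analogous step.
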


\emph{Remarks.} This differs from \cite[Theorem 8.6]{green-tao-nilratner} in several insubstantial ways. On the one hand we have no concern here with the polynomial bounds that were important in that setting. However, we are dealing here with a sublattice $\Lambda \subseteq \Z^D$ rather than $\Z^D$ itself, and with an arbitrary convex body $P$ rather than the box $[N]^D$. This more general result can be deduced from \cite[Theorem 8.6]{green-tao-nilratner} in a somewhat routine, though slightly tedious, manner. We sketch the details in Appendix \ref{appendix-b}. The notation $C^{\infty}([N]^D)$ is recalled both in the appendix and later in this section.

Later on, the notation will get a little complicated. Let us, then, first apply Theorem \ref{mpet} to establish the following very simple special case of the counting lemma (it is, of course, the special case in which $\Psi$ consists of the single form $\psi_1(\textbf{n}) = n_1$).

\begin{lemma}[Irrational implies equidistributed]\label{irrat-equi}
Suppose that $(G/\Gamma, G_{\bullet})$ is a filtered nilmanifold of complexity at most $M$ and that $g : \Z \rightarrow G$ is an $(A,N)$-irrational polynomial sequence. Then we have the equidistribution property
$$ \E_{n \in [N]} F(g(n)\Gamma) = \int_{G/\Gamma} F + O_M( A^{-c_M} \|F\|_{\operatorname{Lip}} )$$
for all Lipschitz $F: G/\Gamma \to \C$ and some $c_M > 0$.
\end{lemma}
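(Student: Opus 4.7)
The plan is to prove Lemma \ref{irrat-equi} by contrapositive, using Theorem \ref{mpet} to convert a failure of equidistribution into a horizontal character that witnesses a failure of irrationality of $g$. Suppose for contradiction that
$$\left| \E_{n \in [N]} F(g(n)\Gamma) - \int_{G/\Gamma} F \right| > \delta \|F\|_{\operatorname{Lip}},$$
with $\delta := A^{-c_M}$ for a small constant $c_M > 0$ to be chosen at the end. Applying Theorem \ref{mpet} with $D=1$, $\Lambda = \Z$, $\mathbf{n_0} = 0$, and $P = [0,N]$ produces a nontrivial horizontal character $\eta : G \to \R$ (vanishing on $\Gamma$) of complexity $O_M(1)$, such that
$$ \| \eta \circ g \|_{C^\infty([N])} = O_{\delta,M}(1) = O_M\!\bigl(A^{K_M c_M}\bigr)$$
for some exponent $K_M$ extracted from the effective form of Theorem \ref{mpet}.

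Next, I would convert this bound into a bound on a single Taylor coefficient of $g$. Write the Taylor expansion $g(n) = g_0\, g_1^{\binom{n}{1}} \cdots g_s^{\binom{n}{s}}$ with $g_i \in G_{(i)}$ (Lemma \ref{taylor-lem}). Since $\eta$ is a homomorphism into the abelian group $\R$,
$$ \eta(g(n)) = \sum_{i=0}^s \binom{n}{i} \eta(g_i) \pmod{\Z}.$$
Let $i^* \in \{1,\ldots,s\}$ be maximal such that $\eta$ is nontrivial on $G_{(i^*)}$; this exists because $\eta$ is itself nontrivial on $G = G_{(1)}$. By maximality, $\eta(g_i) \equiv 0 \pmod \Z$ for $i > i^*$, so $\eta \circ g$ is a polynomial of degree $i^*$ modulo $\Z$, and by definition of the $C^\infty$-norm the leading coefficient satisfies
$$ \| \eta(g_{i^*}) \|_{\R/\Z} \ll_M \frac{A^{K_M c_M}}{N^{i^*}}.$$
Moreover, $\eta$ restricted to $G_{(i^*)}$ is a nontrivial $i^*$-horizontal character of complexity $O_M(1)$ (using that $\eta$ vanishes on $[G_{(j)},G_{(i^*-j)}] \subseteq [G,G]$ for $0 \leq j \leq i^*$, cf.\ the definition of an $i$-horizontal character recalled in Appendix \ref{appendix-a}).

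The final step is to compare with the $(A,N)$-irrationality hypothesis. By Definition \ref{irrat-def}, for every nontrivial $i^*$-horizontal character $\xi$ on $G_{(i^*)}$ of complexity $\leq A$, one has $\|\xi(g_{i^*})\|_{\R/\Z} \geq A / N^{i^*}$. Choose $c_M > 0$ small enough that $C_M A^{K_M c_M} < A$ once $A$ is sufficiently large; then the displayed bound on $\|\eta(g_{i^*})\|_{\R/\Z}$ contradicts irrationality (the complexity $O_M(1)$ of $\eta$ is $\leq A$ in the regime under consideration, while the trivial range of small $A$ is absorbed into the implied constant in the conclusion). The main obstacle is essentially bookkeeping: one must thread the quantitative dependence in Theorem \ref{mpet} through to a power saving in $A$, and verify that the notion of ``$i$-horizontal character of bounded complexity'' used in the definition of irrationality is indeed the one produced by restricting $\eta$ to $G_{(i^*)}$. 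Once these are aligned, the lemma follows.
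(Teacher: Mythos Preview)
Your proof is correct and follows essentially the same route as the paper: argue by contrapositive, invoke the quantitative Leibman criterion (Theorem \ref{mpet}, in the one-parameter form) to obtain a horizontal character $\eta$ with small $C^\infty$-norm, take the maximal $i$ for which $\eta|_{G_{(i)}}$ is nontrivial, and verify that this restriction is an $i$-horizontal character contradicting $(A,N)$-irrationality once $\delta$ is a small enough power of $1/A$. The only cosmetic difference is that you spell out the exponent bookkeeping ($K_M$, $c_M$) a bit more explicitly than the paper does.
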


\begin{proof} Suppose the conclusion is false. Then by\footnote{In fact here we only need the rather simpler $1$-parameter version, which is \cite[Theorem 1.16]{green-tao-nilratner}.} Theorem \ref{mpet} there is some continuous homomorphism $\eta : G \rightarrow \R$ which vanishes on $[G,G]$ and maps $\Gamma$ to $\Z$, has complexity $O_{\delta}(1)$, and for which $\Vert \eta \circ g \Vert_{C^{\infty}[N]} \leq \delta^{-O(1)}$. Recall (cf. \cite[Definition 2.7]{green-tao-nilratner}) what this means: in the Taylor expansion 
\[ \textstyle\eta \circ g(n) = \alpha_0 + \alpha_1 \binom{n}{1} + \dots + \alpha_s \binom{n}{s},\] the $j$th coefficient $\alpha_j$ satisfies $\Vert \alpha_j\Vert_{\R/\Z} \leq \delta^{-O(1)}/N^j$ for $j = 1,\dots, s$. If the sequence $g$ is developed as a Taylor expansion
\[ g(n) = g_0 g_1^{\binom{n}{1}} \dots g_s^{\binom{n}{s}}\] then we of course have $\alpha_j = \eta(g_j)$.
Choose $i$ maximal so that the restriction $\eta|_{G_{(i)}}$ is nontrivial. Then certainly $\Vert \eta(g_i) \Vert_{\R/\Z} \leq \delta^{-O(1)}/N^i$. We claim that $\eta$ is an $i$-horizontal character in the sense of Definition \ref{def-a4}, a statement which will clearly contradict the supposed $(A,N)$-irrationality of $g$ if $\delta$ is a sufficiently small power of $1/A$. To this end all we need do is confirm that $\eta$ vanishes on $G_{(i+1)}$ and on $[G_{(j)}, G_{(i-j)}]$ for $0 \leq j \leq i$, and maps $\Gamma_{(i)}$ to $\Z$. The first of these follows from the maximality of $i$, whilst the second and third follow immediately from the properties of $\eta$ stated at the beginning of the proof.\end{proof}

Let us turn now to the more notationally intensive general case, that is to say the proof of Theorem \ref{count-lem} itself. We will assume henceforth that $\Psi$ has the flag property. Now, we apply Theorem \ref{mpet} to $G^{\Psi}/\Gamma^{\Psi}$ to conclude that there is a non-trivial continuous homomorphism $\eta : G^{\Psi} \rightarrow \R$ which maps $\Gamma^{\Psi}$ to $\Z$, has complexity $O_\delta(1)$, and satisfies 
\begin{equation}\label{smoothness-d} \Vert \eta \circ g^{\Psi} \Vert_{C^{\infty}([N]^D)} = O_\delta(1).\end{equation} Much as in the proof of Lemma \ref{irrat-equi}, what this means is that if $\eta \circ g^{\Psi}(\mathbf{n})$ is developed as a Taylor series in multi-binomial coefficients $\binom{\mathbf{n}}{\mathbf{j}} = \binom{n_1}{j_1} \dots \binom{n_D}{j_D}$ (see Lemma \ref{taylor-lem}), the coefficient $\alpha_{\mathbf{j}}$ satisfies $\Vert \alpha_{\mathbf{j}} \Vert_{\R/\Z} \ll_\delta N^{-|\mathbf{j}|}$. Our aim is to use this information to contradict the assumption that $g(n)$ is $(A,N)$-irrational.

Let us once again take $i$ maximal such that $\eta |_{G^{\Psi}_{(i)}}$ is nontrivial.  
Considering again the Taylor expansion of $g(n)$, we have
\begin{equation}\label{eta-g-taylor} (\eta \circ g^{\Psi})(\mathbf{n}) = \sum_{j=1}^i \eta (g_j^{\binom{\psi_1(\mathbf{n})}{j}},\dots, g_j^{\binom{\psi_t(\mathbf{n})}{j}}).\end{equation}
Take the basis $\vec{v}_1,\vec{v}_2,\dots$ for $\Psi^{[i]}$ described earlier. Then, since the vector \[ \textstyle (\binom{\psi_1(\mathbf{n})}{j},\dots,\binom{\psi_t(\mathbf{n})}{j})\] lies in $\Psi^{[j]}$, there is an expansion
\begin{equation}\label{a13} \textstyle(\binom{\psi_1(\mathbf{n})}{j},\dots,\binom{\psi_t(\mathbf{n})}{j}) \displaystyle= P_{j,1}(\mathbf{n}) \vec{v}_1 + \dots + P_{j,m_j} (\mathbf{n})\vec{v}_{m_j}\end{equation} for $j = 1,\dots, i$, where the $P_{j,k}: \Z^D \to \R$ are polynomials of degree at most $j$, recalling that $m_j := \dim (\Psi^{[j]})$.
Comparing with \eqref{eta-g-taylor}, we obtain
\begin{equation}\label{a14}(\eta \circ g^{\Psi})(\mathbf{n}) = \sum_{j=1}^i \sum_{k = 1}^{m_j}P_{j, k}(\mathbf{n}) \eta(g_j^{\vec{v}_k}).\end{equation}
We are going to look at the coefficients $\alpha_{\mathbf{i}}$ of \eqref{a14} for the monomial $\mathbf{n}^{\mathbf{i}} := n_1^{i_1} \dots n_D^{i_D}$, where $\mathbf{i} = (i_1,\dots, i_D)$ and  $|\mathbf{i}| := |i_1| + \dots + |i_d| = i$.  We are assuming that every such coefficient satisfies $\Vert \alpha_{\mathbf{i}} \Vert_{\R/\Z} \ll_\delta N^{-i}$. Note also that
\begin{equation}\label{a15} \alpha_{\mathbf{i}} = \sum_{k=1}^{m_i} (P_{i,k})_{\mathbf{i}} \eta(g_i^{\vec{v}_k}),\end{equation}
where $(P_{i,k})_{\mathbf{i}}$ is the ${\mathbf{n}}^{\mathbf{i}}$ coefficient of $P_{i,k}({\mathbf n})$; this is because terms of total degree $i$ cannot arise from the terms $j = 1,\dots, i-1$ in the sum on the right hand side of \eqref{a14}.

On the other hand by taking $j = i$ in \eqref{a13} we have
\begin{align}\nonumber  (P_{i,1} & (\mathbf{n}))_{\mathbf{i}} \vec{v}_1 + \dots + (P_{i,m_i}  (\mathbf{n}))_{\mathbf{i}}\vec{v}_{m_i} \\  \nonumber & = \frac{1}{i_1! \dots i_D!}(\psi_1(\mathbf{e}_1)^{i_1}\cdots \psi_1(\mathbf{e}_D)^{i_D}, \dots, \psi_t(\mathbf{e}_1)^{i_1} \cdots \psi_t(\mathbf{e}_D)^{i_D})\\ & = \frac{1}{i_1! \dots i_D!} \Psi(\mathbf{e}_1)^{i_1} \cdots \Psi(\mathbf{e}_D)^{i_D},\label{a16}\end{align} where $\textbf{e}_j = (0,\dots, 1, \dots,0) \in \Z^D$, the $1$ being in the $j$th position, and $\Psi({\mathbf e}_j) := (\psi_1({\mathbf e}_j), \ldots, \psi_t({\mathbf e}_j)) \in \R^t$.  

Comparing \eqref{a15} and \eqref{a16} and using the fact that $\eta$ is a homomorphism on $G^{\Psi}$, we obtain
\[ \alpha_{\mathbf{i}} = \frac{1}{i_1! \dots i_D!} \eta(g_i^{\Psi(\mathbf{e}_1)^{i_1} \cdots \Psi(\mathbf{e}_D)^{i_D}}).\]
Thus, for each $\mathbf{i}$ with $|\mathbf{i}| = |i_1| + \dots + |i_D| = i$, we have
\begin{equation}\label{smoothness} \Vert \eta(g_i^{\Psi(\mathbf{e}_1)^{i_1} \cdots \Psi(\mathbf{e}_D)^{i_D}}) \Vert_{\R/\Z} \ll_\delta N^{-i}\end{equation}
To obtain the desired contradiction with the $(A,N)$-irrationality hypothesis and thus complete the proof, it suffices (after taking $A$ sufficiently large depending on $\delta$) to establish that for at least one choice of $\mathbf{i}$, the map $\xi_{\mathbf{i}}: G_{(i)} \to \R$ defined by
\[ \xi_{\mathbf{i}}(g) := \eta(g^{\Psi(\mathbf{e}_1)^{i_1} \cdots \Psi(\mathbf{e}_D)^{i_D}})\] is a nontrivial horizontal $i$-character of complexity $O_\delta(1)$.

The complexity bound follows from the fact that the coefficients of the forms $\psi_i$ are integers of size $O(1)$ and the Baker-Campbell-Hausdorff formula (Appendix \ref{bch}). That at least one of these maps is nontrivial follows from that fact that $\eta$ is nontrivial on $G^{\Psi}_{(i)}$ and the fact that the vectors $\Psi(\mathbf{e}_1)^{i_1} \cdots \Psi(\mathbf{e}_D)^{i_D}$, $i_1 + \dots + i_D = i$, span $\Psi^{[i]}$ (a consequence of Lemma \ref{depol}).

Now $\xi_{\mathbf{i}}$ maps $\Gamma_{(i)}$ to $\Z$, since if $\gamma \in \Gamma_{(i)}$ then $\gamma^{ \Psi(\mathbf{e}_1)^{i_1} \cdots \Psi(\mathbf{e}_D)^{i_D} } \in \Gamma_{(i)}^{\Psi}$, and this group is mapped to $\Z$ by $\eta$.

Morover $\xi_{\mathbf{i}}$ annihilates $G_{(i+1)}$. To see this, note that $\Psi(\mathbf{e}_1)^{i_1} \cdots \Psi(\mathbf{e}_D)^{i_D} \in \Psi^{[i+1]}$ by Lemma \ref{depol} and the flag property, which guarantees\footnote{This is where the flag property is used crucially, and it is at this point that the error in our original paper occurred.} that $\Psi^{[i]} \leq \Psi^{[i+1]}$. Therefore if $g_{i+1} \in G_{(i+1)}$ then $g_{i+1}^{\Psi(\mathbf{e}_1)^{i_1} \cdots \Psi(\mathbf{e}_D)^{i_D}} \in G_{(i+1)}^{\Psi}$, and by the maximality assumption this latter group is annihilated by $\eta$.

To qualify $\xi_{\mathbf{i}}$ as an $i$-horizontal character we must also show that it vanishes on $[G_{(j)}, G_{(i-j)}]$ for each $0 \leq j \leq i$. To this end, note that we may factor
\[  \Psi(\mathbf{e}_1)^{i_1} \cdots \Psi(\mathbf{e}_D)^{i_D}= w w',\]
where $w \in \Psi^{[j]}$ and $w' \in \Psi^{[i-j]}$. Indeed, we may take
\[ w = \Psi(\mathbf{e}_1)^{j_1} \cdots \Psi(\mathbf{e}_D)^{j_D}, \qquad w' = \Psi(\mathbf{e}_1)^{i_1 - j_1} \cdots \Psi(\mathbf{e}_D)^{i_D -j_D}\] for any indices $j_1,\dots,j_D$ with $j_l \leq i_l$ and $j_1 + \dots + j_D = j$, whereupon the relevant containments follow from Lemma \ref{depol}.
Now if $g \in G_{(j)}$ and $g' \in G_{(i-j)}$ are arbitrary then we have
\[ [ g^w, g^{\prime w'}] \equiv [g,g']^{w w'} \md{G^{\Psi}_{(i+1)}}\] by the Baker-Campbell-Hausdorff formula \eqref{comm-form}. Applying $\eta$, which is trivial on $G^{\Psi}_{(i+1)}$ by assumption, we obtain
\[ \xi_{\mathbf{i}}([g,g']) = \eta ([g,g']^{w w'}) = \eta ([ g^w, g^{\prime w'} ]) = 0,\] the last step being a consequence of the fact that $\eta$ has abelian image and hence vanishes on $[G^{\Psi}, G^{\Psi}]$. 

We have shown that $\xi_{\mathbf{i}}$ is an $i$-horizontal character, and this concludes the proof of the counting lemma, Theorem \ref{count-lem}.

\section{Generalised von Neumann type theorems}\label{gvn-sec}

In this section we recall a number of results asserting the connection between Gowers norms and various types of linear configuration. These results are collectively known in the literature as ``generalised von Neumann theorems''. The connection between Gowers norms (not called by that name, of course) and linear configurations was first made in \cite{gowers-4aps}.  A fairly general result of this type, which appears in \cite{green-tao-linearprimes}, is the following.

\begin{theorem}[Generalised von Neumann Theorem]\label{cs-lemma}  Let $\Psi = (\psi_1,\ldots,\psi_t)$ be a collection of linear forms $\psi_1,\ldots,\psi_t: \Z^D \to \Z$ for some $t,D \geq 1$, any two of which are linearly independent.  Then there exists an integer $s = s(\Psi)$ with the property that one has the inequality
\begin{equation}\label{end}
| \E_{\mathbf{n} \in [N]^d} \prod_{i=1}^t f_i( \psi_i(\mathbf{n}) )| 
\ll_{t,D,\Psi} \inf_{1 \leq i \leq m} \|f_i\|_{U^{s+1}[N]}
\end{equation}
for all $N \geq 1$ and all $f_1,\ldots,f_m: [N] \to \C$ bounded in magnitude by $1$.  
\end{theorem}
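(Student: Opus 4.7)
My plan is to follow the classical iterated Cauchy--Schwarz approach originating in \cite{gowers-4aps} and formalised in this generality in \cite{green-tao-linearprimes}. The first step is to \emph{define} the integer $s = s(\Psi)$ to be the \emph{Cauchy--Schwarz complexity} of the system: the smallest $s \geq 0$ such that, for every $i_0 \in \{1,\ldots,t\}$, the remaining forms $\{\psi_j : j \neq i_0\}$ can be partitioned into $s+1$ classes $C_0,\ldots,C_s$ with the property that $\psi_{i_0}$ does not lie in the $\Q$-linear span of $\{\psi_j : j \in C_\ell\}$ for any $\ell$. Under the pairwise linear independence hypothesis this is finite: one may always place each $\psi_j$ in its own singleton class, giving the crude bound $s(\Psi) \leq t-2$.

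Having fixed the index $i_0$ at which the infimum in \eqref{end} is attained, I would extract from each class $C_\ell$ a ``nullifying'' direction $\mathbf{h}_\ell \in \Z^D$ satisfying $\psi_j(\mathbf{h}_\ell) = 0$ for all $j \in C_\ell$ but $\psi_{i_0}(\mathbf{h}_\ell) \neq 0$. Such an $\mathbf{h}_\ell$ exists (after clearing denominators) precisely because $\psi_{i_0}$ is not in the $\Q$-span of $C_\ell$. The core of the proof is then to apply Cauchy--Schwarz $s+1$ times, once per class. At the $\ell$th step, Cauchy--Schwarz in $\mathbf{n}$ exploits the invariance $\psi_j(\mathbf{n}+\mathbf{h}_\ell) = \psi_j(\mathbf{n})$ for $j \in C_\ell$ to pull each such $f_j$ outside (using $|f_j| \leq 1$), while each surviving $f_k$ (including $f_{i_0}$) is replaced by a product of two shifted copies, at $\mathbf{n}$ and at $\mathbf{n}+\mathbf{h}_\ell$. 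After all $s+1$ steps every $f_j$ with $j \neq i_0$ has been absorbed into the constant, and one is left with an average of the form
\[ \E_{\mathbf{n},\mathbf{h}_0,\ldots,\mathbf{h}_s} \prod_{\omega \in \{0,1\}^{s+1}} \mathcal{C}^{|\omega|} f_{i_0}\!\left( \psi_{i_0}(\mathbf{n}) + \sum_{\ell=0}^{s} \omega_\ell \psi_{i_0}(\mathbf{h}_\ell) \right), \]
where $\mathcal{C}$ denotes complex conjugation and the averages are over an appropriate region in $\Z^D$.

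This last expression should, up to a constant depending only on $\Psi$, coincide with $\|f_{i_0}\|_{U^{s+1}[N]}^{2^{s+1}}$; taking $2^{s+1}$-th roots then yields \eqref{end}. The substitutions $m := \psi_{i_0}(\mathbf{n})$ and $k_\ell := \psi_{i_0}(\mathbf{h}_\ell) \in \Z \setminus \{0\}$ reveal the inner structure as an $(s+1)$-cube of multiplicative derivatives of $f_{i_0}$ along directions $k_0,\ldots,k_s$. \textbf{The main obstacle} lies precisely at this stage: the $k_\ell$ are only nonzero integers of magnitude $O_\Psi(1)$, not $\pm 1$, so what one obtains is a ``dilated'' Gowers-type average rather than $\|f_{i_0}\|_{U^{s+1}[N]}$ itself. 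Converting between the two requires a short but careful elementary lemma on dilation-invariance of the Gowers norm on $[N]$, together with bookkeeping of boundary effects arising from the fact that $\mathbf{n}$ ranges over the subset of $[N]^D$ on which every $\psi_i(\mathbf{n})$ lies in $[N]$ rather than over a full finite abelian group. Both these steps are routine once one passes to the embedding $[N] \hookrightarrow \Z/\tilde N\Z$ used to define the Gowers norm in the first place.
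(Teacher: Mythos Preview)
Your proposal is correct and follows the standard iterated Cauchy--Schwarz argument. Note, however, that the paper does not actually give its own proof of this theorem: it is quoted from \cite[Appendix C]{green-tao-linearprimes}, and the surrounding remarks simply describe the Cauchy--Schwarz complexity $s(\Psi)$ and point to that reference (and to \cite[\S 2]{gowers-wolf-1} and \cite[Proposition 1.11]{green-montreal} for expository versions). Your sketch is precisely the argument carried out there, including the identification of $s(\Psi)$ via the partition-into-classes definition and the final reduction to a dilated Gowers cube; so there is nothing further to compare.
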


\emph{Remarks.} A natural value of $s(\Psi)$ comes from the proof in \cite{green-tao-linearprimes}, which proceeds via $s$ applications of the Cauchy-Schwarz inequality. For this reason Gowers and Wolf \cite{gowers-wolf-1} call $s(\Psi)$ the \emph{Cauchy-Schwarz complexity} of the system $\Psi$. There is a linear-algebra recipe for computing $s(\Psi)$ which is not especially enlightening but sufficiently simple that we can give it here (see the introduction to \cite{green-tao-linearprimes} for more details). If $1 \leq i \leq t$ and $s \geq 0$ then we say that $\Psi$ has $i$-complexity at most $s$ if one can cover the $t-1$ forms $\{\psi_j : j \in [t] \setminus \{i\}\}$ by $s+1$ classes, such that $\psi_i$ does not lie in the linear span of the forms in any one of these classes. Then $s(\Psi)$ is the smallest $s$ for which the system has $i$-complexity at most $s$ for all $1 \leq i \leq t$. Note, then, that the Cauchy-Schwarz complexity of the system $\Psi = \{n_1, n_1 + n_2, \dots, n_1 + (k-1)n_2\}$ corresponding to a $k$-term arithmetic progression is $k-2$. As a final remark, let us note that Theorem \ref{cs-lemma}, as proved in \cite[Appendix C]{green-tao-linearprimes}, is regrettably somewhat difficult to understand as we had to establish a more general result in which the functions $f_i$ were bounded by an arbitrary pseudorandom measure, and this is notationally heavy. For a gentle explanation of the special case $\Psi = \{n_1, n_1 + n_2, n_1 + 2n_2, n_1 + 3n_2\}$ (where $s = 2$) the reader may consult \cite[Proposition 1.11]{green-montreal}.  A sketch of the proof of Theorem \ref{cs-lemma} is also given in \cite[\S 2]{gowers-wolf-1}.  See also \cite{leib-orbits} for a variant of these notions of complexity in the ergodic setting, and for polynomial forms instead of linear ones.

We will need a \emph{twisted} version of the Generalised von Neumann inequality, in which an additional nilsequence of lower degree is inserted. We shall not need it for general linear forms, so we formulate just the special case we need.

\begin{lemma}[Twisted generalised von Neumann theorem]\label{gvn-twist} Let $k \geq 3$, let $f_0,\ldots,f_{k-1}: [N] \to \C$ be bounded in magnitude by $1$, let $c_0,\dots, c_{k-1}$ be distinct integers, and let $F(g(n)\Gamma)$ be a degree $\leq (k-2)$ nilsequence of complexity at most $M$.  Then
$$ \big|\E_{n \in [N],d \in [-N,N]} F(g(d)\Gamma) \prod_{i=0}^{k-1} f_i(n + c_i d)\big| \ll_{k,M,c_0,\dots, c_{k-1}} \inf_{0 \leq i \leq k-1} \|f_i\|_{U^{k-1}[N]}.$$
\end{lemma}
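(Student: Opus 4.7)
The strategy is to imitate the Cauchy--Schwarz scheme used in the proof of the untwisted generalised von Neumann theorem (Theorem~\ref{cs-lemma}) while carrying the nilsequence weight $F(g(d)\Gamma)$ through each step. By symmetry it suffices to bound the left-hand side by $\|f_{k-1}\|_{U^{k-1}[N]}$. First I would apply Cauchy--Schwarz in the $n$ variable $k-2$ times in succession, each time eliminating one of $f_0,\ldots,f_{k-3}$ and introducing a new shift parameter $h_j \in [-N,N]$ in the $d$ variable. The crucial observation is that at each iteration the weight $F(g(d)\Gamma)$ is replaced by a product of the form $F(g(d)\Gamma)\,\overline{F(g(d+h_j)\Gamma)}$, and since $d\mapsto g(d+h_j)$ is a polynomial sequence of the same degree as $g$ (by Theorem~\ref{ll-thm}) and products of degree $\leq k-2$ polynomial nilsequences remain degree $\leq k-2$ polynomial nilsequences on the product nilmanifold (with complexity boundedly inflated), the weight retains its degree $\leq k-2$ structure of complexity $O_{k,M}(1)$.

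After these $k-2$ iterations we arrive at a twisted bilinear estimate of the shape
\[ |\text{LHS}|^{2^{k-2}} \ll \E_{\vec h\in [-N,N]^{k-2}} \Bigl|\E_{n,d}\, F'(d;\vec h)\, p_{\vec h}(n+c_{k-2}d)\, q_{\vec h}(n+c_{k-1}d)\Bigr|, \]
where $p_{\vec h}, q_{\vec h}$ are $(k-2)$-fold multiplicative derivatives of $f_{k-2}, f_{k-1}$ (still bounded by $1$) and $F'(\cdot;\vec h)$ is a degree $\leq k-2$ polynomial nilsequence in $d$ of complexity $O_{k,M}(1)$.

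The hard part will be to handle this twisted bilinear expression so as to obtain a bound by $\|f_{k-1}\|_{U^{k-1}}$ rather than by $\|f_{k-1}\|_{U^{k}}$: a direct Cauchy--Schwarz to eliminate the weight $F'$ would cost one additional derivative, and equivalently, applying Theorem~\ref{cs-lemma} to the enlarged system $(n+c_0d,\ldots,n+c_{k-1}d,\,d)$ gives only a $U^{k}$ bound because the added linear form $d$ raises the Cauchy--Schwarz complexity by one. To avoid this loss I would change variables to $(m,x):=(n+c_{k-2}d,\,n+c_{k-1}d)$ and rewrite $F'(d;\vec h)$, using $d=(x-m)/(c_{k-1}-c_{k-2})$, as a degree $\leq k-2$ nilsequence on the two-variable substrate $(m,x)$; decompose it via its nilcharacter (Peter--Weyl type) expansion on the underlying compact nilmanifold into an absolutely summable collection of degree $\leq k-2$ nilcharacters; and absorb each into $p_{\vec h}$ and $q_{\vec h}$ via the fact that the $U^{k-1}$ norm is invariant (up to multiplicative constants depending on the complexity) under multiplication by degree $\leq k-2$ polynomial phases.

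Finally, averaging over $\vec h$ and invoking the standard identity
\[ \|f\|_{U^{k-1}}^{2^{k-1}} = \E_{\vec h'\in [-N,N]^{k-1}} \bigl|\E_m \Delta_{h_1'}\cdots\Delta_{h_{k-1}'} f(m)\bigr|^2, \]
together with Cauchy--Schwarz, should deliver the claimed bound $|\text{LHS}| \ll_{k,M,c_0,\ldots,c_{k-1}} \|f_{k-1}\|_{U^{k-1}[N]}$. Everything in this plan beyond the degree-preserving absorption of the nilsequence weight in the bilinear step is routine Cauchy--Schwarz bookkeeping, relying only on the closure of degree $\leq k-2$ nilsequences (of bounded complexity) under products, shifts, and conjugation.
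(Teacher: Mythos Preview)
Your approach has a genuine gap in the final ``absorption'' step, and the paper's proof proceeds quite differently.

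The paper argues by \emph{induction on $k$}. The base case $k=3$ is handled exactly as you might expect: the nilmanifold is a torus, and a Fourier decomposition of $F$ reduces to the untwisted generalised von Neumann theorem after absorbing a linear phase into $f_1, f_2$ (which does not change their $U^2$ norms). For $k\geq 4$, the paper first performs a \emph{vertical} Fourier decomposition so that $F$ has a vertical frequency $\xi$ on $G_{(k-2)}$, then applies Cauchy--Schwarz \emph{once} (after normalising $c_0=0$, eliminating $f_0$). The crucial fact, quoted from \cite[Proposition~7.2]{green-tao-nilratner}, is that when $F$ has a vertical frequency the derivative $d\mapsto F(g(d+h)\Gamma)\overline{F(g(d)\Gamma)}$ is a nilsequence of degree $\leq k-3$, i.e.\ the degree \emph{drops by one}. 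One then invokes the inductive hypothesis (now with $k-1$ functions and a degree $\leq k-3$ weight), and finishes via H\"older and the recursive definition of the Gowers norms.

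Your plan omits this degree-lowering mechanism: you carry the weight through all $k-2$ Cauchy--Schwarz steps at degree $\leq k-2$, and then try to ``separate'' it at the bilinear stage. This cannot work as stated. After the change of variables $(m,x)=(n+c_{k-2}d,\,n+c_{k-1}d)$, the weight $F'(d)$ becomes a degree $\leq k-2$ nilsequence in the \emph{two} variables $(m,x)$, and such objects are in general not decomposable as an absolutely summable sum $\sum_j \phi_j(m)\psi_j(x)$ with $\phi_j,\psi_j$ of bounded complexity. Already for $k=4$, the quadratic phase $e(\alpha d^2)$ becomes $e(\alpha'(x-m)^2)=e(\alpha' x^2)e(\alpha' m^2)e(-2\alpha' m x)$, and the cross term $e(-2\alpha' m x)$ is precisely the obstruction: it is not a product of a bounded-complexity nilsequence in $m$ with one in $x$, and no Peter--Weyl expansion on the underlying nilmanifold changes this. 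The invariance of $\|\cdot\|_{U^{k-1}}$ under multiplication by degree $\leq k-2$ phases is a one-variable statement and does not address this two-variable coupling. In short, the missing idea is exactly the vertical-frequency/degree-lowering step that makes the induction close; if you insert a vertical Fourier decomposition \emph{before each} Cauchy--Schwarz, the weight does drop one degree per step and you recover essentially the paper's argument (unrolled).
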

\begin{proof}  We induct on $k$, starting with the case $k = 3$. The underlying nilmanifold $G/\Gamma$ is then a torus $(\R/\Z)^m$ with $m = O_M(1)$, and $g(n) = \theta n + \theta_0$ may be taken to be linear. By a standard Fourier decomposition we may assume that $F(x) = e(\xi \cdot x)$ for some $\xi \in \Z^m$ with $|\xi| = O_M(1)$, in which case we may rewrite the estimate to be proven as
\[ |\E_{n \in [N]} \E_{d \in [-N,N]} f_0(n + c_0 d) f'_1 (n + c_1 d) f'_2 (n + c_2 d)| \ll_{k,M} \inf_{i = 0,1,2} \| f_i \|_{U^2[N]},\]
where $f'_1(n) = f_1(n) e(-(c_2 - c_1)^{-1}\xi \cdot \theta n)$ and $f'_2(n) = f_2(n) e((c_2 - c_1)^{-1}\xi \cdot \theta n)$. However it is easy to establish the invariance properties $\Vert f_1 \Vert_{U^2} = \Vert f'_1\Vert_{U^2}$ and $\Vert f_2 \Vert_{U^2} = \Vert f'_2 \Vert_{U^2}$, and so the result follows immediately from Theorem \ref{cs-lemma}.

Now suppose that $k \geq 4$ and the claim has already been proven for smaller $k$.  By permuting indices and then translating $n$, it suffices to show that
\begin{equation}\label{endit}
 |\E_{n \in [N]; d \in [-N,N]} F(g(d)\Gamma) \prod_{i=0}^{k-1} f_i(n + c_i d)| \ll_{k,M,c_0,\ldots,c_{k-1}} \|f_{k-1}\|_{U^{k-1}[N]}
 \end{equation}
under the assumption that $c_0=0$.

Recall from \cite{green-tao-nilratner} that we define a \emph{vertical character} to be a continuous homomorphism $\xi: G_{(k-2)}/(G_{(k-2)} \cap \Gamma) \to \R/\Z$.  We say that $F$ has \emph{vertical frequency} $\xi$ if one has $F(g_{k-2} x) = e(\xi(g_{k-2})) F(x)$ for all $x \in G/\Gamma$ and $g_{k-2} \in G_{(k-2)}$.  By a standard Fourier decomposition in the vertical direction (e.g. by arguing exactly as in \cite[Lemma 3.7]{green-tao-nilratner}) we may assume without loss of generality that $F$ has a vertical frequency $\xi$.

Applying the Cauchy-Schwarz inequality, we can bound the left-hand side of \eqref{endit} by
$$
\ll |\E_{n \in [N]; h,d \in [-N,N]} F(g(d+h)\Gamma) \overline{F(g(d)\Gamma)}
\prod_{i=0}^{k-1} f_i(n + c_i d + c_i h) \overline{f_i(n + c_i d)}|^{1/2}.$$
Because $F$ has a vertical frequency, $F(g(d+h))\overline{F(g(d)\Gamma)}$ is a degree $\leq (k-3)$ nilsequence of complexity $O_{M,k}(1)$ (see \cite[Proposition 7.2]{green-tao-nilratner}).  Applying the induction hypothesis, we may thus bound the above expression by
$$ \ll_{M,k,c_0,\ldots,c_{k-1}} (\E_{h \in [-N,N]} \| \Delta_{c_i h} f_i \|_{U^{k-2}[N]}^2)^{1/2}$$
which by H\"older's inequality can be bounded by
$$ \ll_{M,k,c_0,\ldots,c_{k-1}} (\E_{h \in [-|c_i|N,|c_i|N]} \| \Delta_{h} f_i \|_{U^{k-2}[N]}^{2^{k-2}})^{1/2^{k-2}}$$
and the claim follows from the recursive definition of the Gowers norms.
\end{proof}

\emph{Remark.} The above argument is very similar to the short proof presented in \cite[Appendix G]{green-tao-ziegler-u4inverse} that $s$-step nilsequences obstruct uniformity in the $U^{s+1}$-norm (that is, the inverse conjecture $\GI(s)$ is an if-and-only if statement).

\section{On a conjecture of Bergelson, Host, and Kra}\label{bhk-sec}

We now apply the arithmetic regularity and counting lemmas to establish Theorem \ref{bhk-thm}, the proof of the conjecture of Bergelson, Host and Kra.  It will suffice to prove the following claim.

\begin{theorem}\label{bhk-thm2}  Let $k = 1, 2, 3$ or $4$, and suppose that $0 < \alpha < 1$ and $\eps > 0$.  Then for any $N \geq 1$ and any subset $A \subseteq [N]$ of density $|A| \geq \alpha N$, one can find a function $\mu: \Z \to \R^+$ such that
\begin{equation}\label{mudi}
\E_{d \in [-N,N]} \mu(d) = 1 + O(\eps)
\end{equation}
and
\begin{equation}\label{mudi-2}
 \sup_{d \in [-N,N]} \mu(d) \ll_{\alpha,\eps} 1
\end{equation}
such that
\begin{equation}\label{mudi-3}
 \E_{n \in [N]; d \in [-N,N]} 1_A(n) 1_A(n+d) \ldots 1_A(n+(k-1)d) \mu(d) \geq \alpha^k - O(\eps).
\end{equation}
\end{theorem}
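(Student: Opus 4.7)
The plan is to apply the arithmetic regularity lemma (Theorem \ref{strong-reg}) to $f := 1_A$ with degree parameter $s = k-2$ (so $s \leq 2$), small error $\eps$, and a rapidly growing function $\Grow$ to be chosen at the end, obtaining a decomposition $f = f_\nil + f_\sml + f_\unf$ in which $f_\nil(n) = F(g(n)\Gamma, n \mod q, n/N)$ is an $(\Grow(M),N)$-irrational virtual nilsequence of complexity $\leq M$, $\|f_\sml\|_{L^2} \leq \eps$, $\|f_\unf\|_{U^{k-1}} \leq 1/\Grow(M)$, and both $f_\nil$ and $f_\nil + f_\sml$ take values in $[0,1]$. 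I will then define $\mu(d)$ to be a nonnegative degree $\leq k-2$ nilsequence in $d$ of complexity $O_M(1)$, concentrated on those $d$ for which the orbit shift $(g(d)\Gamma, d \mod q, d/N)$ lies near the origin in the virtual nilmanifold, renormalised so that $\E_d \mu(d) = 1 + O(\eps)$ and $\sup_d \mu(d) = O_{\alpha,\eps}(1)$. The existence of such a $\mu$ follows from a standard partition of unity on the compact nilmanifold.

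Expanding $1_A(n+(i-1)d) = f_\nil + f_\sml + f_\unf$ in each of the $k$ factors produces $3^k$ terms. The $f_\unf$ contributions are handled via the twisted generalised von Neumann theorem (Lemma \ref{gvn-twist}): since $\mu$ is itself a nilsequence of degree $\leq k-2$ and complexity $O_M(1)$, every term in which at least one $f_\unf$ factor appears is bounded in absolute value by $O_M(\|f_\unf\|_{U^{k-1}}) = O_M(1/\Grow(M))$, which is negligible once $\Grow$ is chosen sufficiently rapidly in $M$. For the main term
\[
\E_{n,d} f_\nil(n) f_\nil(n+d) \cdots f_\nil(n+(k-1)d) \mu(d),
\]
I would invoke the counting lemma (Theorem \ref{count-lem}) applied to the system $\Psi$ of arithmetic progression forms $\psi_i(n,d) = n + (i-1)d$, augmented by the form carrying $d$. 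Because $\mu$ localises to $d$'s whose image in the nilmanifold lies near the identity, the Lipschitz property of $F$ forces $F(g(n+id)\Gamma) \approx F(g(n)\Gamma)$ uniformly in $i$, so the main term approximates $\E_n f_\nil(n)^k$. By Jensen's inequality and the nonnegativity of $f_\nil$, this is at least $(\E_n f_\nil(n))^k = \alpha^k + O(\eps)$. That the diagonal collapse produces a genuine $k$-th power, rather than something strictly smaller, is precisely where the structural restriction $s = k-2 \leq 2$ will enter; for $k \geq 5$ the conclusion itself fails by Ruzsa's counterexample.

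The main obstacle I anticipate is controlling the $f_\sml$ contributions. Because $\mu$ can be as large as $O_{\alpha,\eps}(1)$ pointwise, and there are $3^k - 2^k$ mixed terms containing at least one $f_\sml$ factor, a bare application of Cauchy--Schwarz only yields a bound that is marginal in $\eps$. The delicate point, hinted at in the introduction, is that $f_\sml$ could in principle conspire with $f_\nil$ on a small exceptional set so as to destroy the main term. I expect to handle this by exploiting the nonnegativity $f_\nil + f_\sml \in [0,1]$ together with a layer-cake or ``nil-Bohr'' argument: one approximates $f_\sml$ by a function measurable with respect to the same factor generating $f_\nil$, absorbing it into $f_\nil$ at the cost of a bounded pointwise perturbation and a negligible $L^2$ error, and then reruns the counting lemma on the good part to verify that the AP patterns from $f_\nil$ cannot overly concentrate on the bad set. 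Making this step rigorous, so that the final error is genuinely $o(1)$ as $\eps \to 0$ uniformly in $M$, is the technical heart of the argument.
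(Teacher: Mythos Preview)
Your overall architecture is right --- regularity lemma at $s=k-2$, twisted generalised von Neumann for the $f_\unf$ terms --- and matches the paper. Two points deserve correction.

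First, you are worrying about the wrong difficulty. The $f_\sml$ terms are actually easy here, because $\mu$ depends only on $d$. A term containing $f_\sml(n+id)$ is bounded in magnitude by $\E_{n,d} |f_\sml(n+id)| \mu(d)$; after the change of variables $n' = n+id$ this factorises as $(\E_{n'} |f_\sml(n')|)(\E_d \mu(d)) \ll \|f_\sml\|_{L^2[N]} \cdot (1+O(\eps)) = O(\eps)$. No layer-cake or nil-Bohr argument is needed, and no conspiracy with $f_\nil$ can occur. This is exactly how the paper dispatches these terms in one line.

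Second, and this is the real gap: your treatment of the main term fails for $k=4$. You claim that localising $d$ to a nil-Bohr set forces $F(g(n+id)\Gamma) \approx F(g(n)\Gamma)$. For a degree $\leq 1$ nilsequence (the case $k=3$) this is correct, and is how the paper proceeds. But for $k=4$, $f_\nil$ has degree $\leq 2$: writing $g(n) = g_0 g_1^n g_2^{\binom{n}{2}}$, the quantity $g(n)^{-1}g(n+d)$ involves a term like $g_2^{nd}$ that varies with $n$ even when $g_1^d$ and $g_2^{d^2}$ are near the identity. Localising $d$ can only force the \emph{projection} $\pi(g(n+id))$ to the horizontal torus $G/(G_{(2)}\Gamma)$ to be close to $\pi(g(n))$; it cannot control the vertical $G_{(2)}$-coordinate. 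So the main term does \emph{not} collapse to $\E_n f_\nil(n)^4$.

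What the paper does instead for $k=4$ is apply the counting lemma to express the main term as an integral over the Hall--Petresco nilmanifold $G^\Psi/\Gamma^\Psi$, use the horizontal localisation of $\mu$ to reduce this to an integral over the vertical torus fibre subject to the constraint $g_{2,0} g_{2,1}^{-3} g_{2,2}^3 g_{2,3}^{-1} = \id$, and then observe via vertical Fourier analysis that this constraint has the form $\sum_\xi |\hat F(g_0,\xi)|^2 |\hat F(g_0,3\xi)|^2 \geq |\hat F(g_0,0)|^4$. This positivity of the binomial coefficients $1,-3,3,-1$ is precisely the structural input that makes $k=4$ work and $k\geq 5$ fail; your proposal does not engage with it at all.
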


Indeed, from \eqref{mudi}, \eqref{mudi-3}, we see that we have
$$
 \E_{n \in [N]} 1_A(n) 1_A(n+d) \ldots 1_A(n+(k-1)d) \geq \alpha^k - O(\eps)$$
for all $d$ in a subset $E$ of $[-N,N]$ with $\E_{d \in [-N,N]} 1_E(d) \mu(d) \gg_{\alpha,\eps} 1$.  From \eqref{mudi-2} we conclude that $|E| \gg_{\alpha,\eps} N$, and Theorem \ref{bhk-thm} follows (after shrinking $\eps$ by an absolute constant). Conversely, it is not difficult to deduce Theorem \ref{bhk-thm} from Theorem \ref{bhk-thm2}.

It remains to establish Theorem \ref{bhk-thm2}.  We may assume that $N$ is large depending on $\alpha,\eps$ as the claim is trivial otherwise (just take $\mu$ to be the Kronecker delta function at $0$).

For $k=1$ one can simply take $\mu \equiv 1$.  For $k=2$, we first observe that
$$ \E_{n \in [N]} \E_{h \in [-\eps N,\eps N]} 1_A(n+h) = \alpha + O(\eps);$$
applying Cauchy-Schwarz we conclude that
$$ \E_{h,h' \in [-\eps N,\eps N]} \E_{n \in [N]} 1_A(n+h) 1_A(n+h') \geq \alpha^2 - O(\eps).$$
The claim then follows, with $\mu$ being the probability density function of $h-h'$ as $h,h'$ range uniformly in $[-\eps N, \eps N]$.

Now we turn to the cases $k=3,4$.   Let $\Grow: \R^+ \to \R^+$ be a sufficiently rapidly growing function depending on $\alpha,\eps$ in a manner to be specified later.  We apply Theorem \ref{strong-reg} with $s:=k-2$ to obtain a quantity $M = O_{\eps,\Grow}(1)$ and a decomposition
\begin{equation}\label{decomposition} 1_A(n) = f_{\nil}(n) + f_{\sml}(n) + f_{\unf}(n)\end{equation}
such that
\begin{itemize}
\item[(i)] $f_\nil(n)$ is a $(\Grow(M),N)$-irrational degree $\leq k-2$ virtual nilsequence of complexity at most $M$ and scale $N$;
\item[(ii)] $f_\sml$ has an $L^2[N]$ norm of at most $\eps/100$;
\item[(iii)] $f_\unf$ has an $U^{k-1}[N]$ norm of at most $1/\Grow(M)$;
\item[(iv)] $f_\nil, f_\sml, f_\unf$ are all bounded in magnitude by $1$; and
\item[(v)] $f_\nil$ and $f_{\nil} + f_{\sml}$ are non-negative.
\end{itemize}

It is clear that $| \E_{n \in [N]} f_{\sml}(n)| = O(\eps)$, and furthermore, by Theorem \ref{cs-lemma} (setting all but one of the functions equal to $1$) we also have $|\E_{n \in [N]} f_{\unf}(n)| = O(\eps)$ if $\Grow$ grows rapidly enough.
Therefore 
\begin{equation}\label{nnil}
 \E_{n \in [N]} f_\nil(n) \geq \alpha - O(\eps).
\end{equation}

The heart of the matter is the following proposition.

\begin{proposition}[Bergelson-Host-Kra for $f_\nil$]\label{bhk-prop}  Let $k=3,4$.  Then there exists a non-negative $(k-2)$-step nilsequence $\mu: \Z \to \R^+$ of complexity $O_{\alpha,\eps,M}(1)$ obeying the normalisation
\begin{equation}\label{ddn}
\E_{d \in [N]} \mu(d) = 1 + O(\eps)
\end{equation}
and such that
\begin{equation}\label{fnil}
 \E_{n,d \in [N]}  f_\nil(n) f_\nil(n+d) \ldots f_\nil(n+(k-1)d)\mu(d)  \geq \alpha^k - O( \eps).
\end{equation}
\end{proposition}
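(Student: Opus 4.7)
The plan is to choose $\mu(d)$ as a nil-Bohr bump concentrated on the ``return times'' of the polynomial orbit $g$: those $d$ for which $g(n+id)\Gamma$ remains within $O(\eta)$ of $g(n)\Gamma$ in $G/\Gamma$ for every $n \in [N]$ and every $i \in \{0,\dots,k-1\}$, where $\eta = \eta(\alpha,\eps,M) > 0$ is a small parameter. For such $d$, the Lipschitz bound on $F$ yields $f_\nil(n+id) = f_\nil(n) + O_M(\eta)$, so once one has the normalisation $\E_d \mu(d) = 1 + O(\eps)$ one obtains
\[ \E_{n,d} \prod_{i=0}^{k-1} f_\nil(n+id)\,\mu(d) \;=\; \E_n f_\nil(n)^k + O_M(\eta) + O(\eps) \;\geq\; \alpha^k - O_M(\eta) - O(\eps), \]
the final step being Jensen's inequality applied to the non-negative $f_\nil$ together with \eqref{nnil}. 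Choosing $\eta$ small enough then yields \eqref{fnil}.

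Constructing such a $\mu$ as a non-negative degree $\leq k-2$ nilsequence of complexity $O_{\alpha,\eps,M}(1)$ uses the Taylor expansion of $g$. After normalising so that $g(0)=\id$, one has $g(n)=g_1^n$ for $s=1$, and $g(n)=g_1^n g_2^{\binom{n}{2}}$ with $g_2$ central for $s=2$. For $s=1$ the return-time set is simply $\{d : g_1^d\Gamma \text{ is within } \eta \text{ of } \Gamma\}$, and a smooth Lipschitz bump on $G/\Gamma$ near the identity coset, evaluated at $g_1^d\Gamma$, gives $\mu$. For $s=2$, the identity $g(n+d)g(n)^{-1} = g_1^d\, g_2^{nd+\binom{d}{2}}$ shows that one needs the projection of $g_1^d$ to $G/(G_{(2)}\Gamma)$ to be within $\eta$ of identity \emph{and} $g_2^d$ to be within $\eta/N$ of $\Gamma_{(2)}$, this finer scale being forced by the factor $g_2^{nd}$ uniformly over $n\in[N]$. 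Both conditions are cut out by a product of Lipschitz bumps on $G/(G_{(2)}\Gamma)$ and on the rescaled vertical torus $G_{(2)}/\Gamma_{(2)}$, producing a non-negative $2$-step nilsequence; the virtual factors $n\bmod q$ and $n/N$ in the definition of $f_\nil$ are accommodated by appending the local constraints $d \equiv 0 \bmod q$ and $d/N \in [-\eta,\eta]$ to $\mu$. The $(\Grow(M),N)$-irrationality of $g$ then ensures, via Lemma \ref{irrat-equi} applied to the relevant (possibly rescaled) sub-orbits, that $\E_d \mu(d)$ converges to a positive constant $\gg_{\alpha,\eps,M} 1$, allowing the required normalisation by an appropriate scalar.

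The main obstacle is precisely this construction, which depends crucially on $s = k-2 \leq 2$: only in this range does the shifted polynomial $g(n+d)g(n)^{-1}$ factor into finitely many ``low-level'' polynomial-in-$d$ expressions whose smallness at individually controlled scales can be localised by a single nilsequence in $d$. For $s \geq 3$, mixed derivatives of $g$ entangle degrees in a way that no nilsequence of step $\leq s$ can capture, which is the geometric content of Ruzsa's counterexample and of the failure of Theorem \ref{bhk-thm} for $k \geq 5$. A secondary technical point is verifying rigorously that the approximation $f_\nil(n+id) \approx f_\nil(n)$ really holds uniformly in $n$ on the support of $\mu$; one expects to reduce this to an equidistribution statement for the two-variable polynomial orbit $(n,d) \mapsto g(n+d)g(n)^{-1}$ via the counting lemma (Theorem \ref{count-lem}), the required input being again the $(\Grow(M),N)$-irrationality of $g$.
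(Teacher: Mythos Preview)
Your approach for $k=3$ is correct and is essentially what the paper does: with $s=1$ the nilmanifold is a torus, $f_\nil(n)=F(n\theta)$, and a Bohr bump on $d$ makes $f_\nil(n+id)=f_\nil(n)+O_M(\eta)$, whence H\"older gives the bound.

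For $k=4$, however, there is a genuine gap. You correctly compute $g(n+d)g(n)^{-1}=g_1^d\,g_2^{nd+\binom{d}{2}}$ and observe that to force $f_\nil(n+id)\approx f_\nil(n)$ \emph{uniformly in $n\in[N]$} one must have $g_2^d$ within $\eta/N$ of $\Gamma_{(2)}$ in the vertical torus. But this set of $d$ has density $O_M(\eta/N)$ in $[N]$: by the $(\Grow(M),N)$-irrationality of $g_2$ the linear orbit $d\mapsto g_2^d\Gamma_{(2)}$ equidistributes on the vertical torus over $d\in[N]$, so the proportion hitting a ball of radius $\eta/N$ is comparable to its volume. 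Consequently $\E_{d\in[N]}\mu(d)$ does \emph{not} converge to a constant $\gg_{\alpha,\eps,M}1$; after normalising to \eqref{ddn} your $\mu$ would have sup norm (hence Lipschitz norm, hence complexity) $\gg_M N/\eta$, violating the requirement that $\mu$ be a nilsequence of complexity $O_{\alpha,\eps,M}(1)$. Equivalently, a Lipschitz bump at scale $\eta/N$ on the ``rescaled'' vertical torus already has Lipschitz norm $\sim N/\eta$; no bounded-complexity degree $\leq 2$ nilsequence in $d$ can detect a linear-in-$d$ condition at scale $1/N$.

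The paper's argument for $k=4$ is therefore genuinely different. It takes $\mu(d)=c\,1_{[-\eps'N,\eps'N]}(d)\,\phi(\pi(g_1)^d)$, constraining \emph{only} the horizontal projection $\pi(g_1)^d\in G/(G_{(2)}\Gamma)$ to be small and leaving the vertical direction entirely free. One then does \emph{not} have $f_\nil(n+id)\approx f_\nil(n)$; instead the counting lemma reduces the weighted average to an integral over the Hall--Petresco nilmanifold $\HP^4(G)/\Gamma^\Psi$, and the horizontal constraint lets one replace $F(g_0g_1^j g_{2,j}\Gamma)$ by $F(g_0g_{2,j}\Gamma)$. The remaining integral over $(g_{2,0},\dots,g_{2,3})$ subject to $g_{2,0}g_{2,1}^{-3}g_{2,2}^{3}g_{2,3}^{-1}=\id$ is handled by vertical Fourier analysis: it equals $\sum_{\xi}|\hat F(g_0,\xi)|^2|\hat F(g_0,3\xi)|^2\geq|\hat F(g_0,0)|^4$, and this positivity---not any approximate constancy of $f_\nil$ along the progression---is what delivers the $\alpha^4$ lower bound. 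This Cauchy--Schwarz structure of the $k=4$ Hall--Petresco constraint is precisely what breaks for $k\geq 5$, rather than the derivative entanglement you describe.
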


\begin{proof}[Deduction of Theorem \ref{bhk-thm2} from Proposition \ref{bhk-prop}.]  Using \eqref{decomposition}, one can expand the left-hand side of \eqref{mudi-3} into $3^k$ terms, one of which is \eqref{fnil}.  As for the other terms, any term involving at least one copy of $f_\unf$ is of size $O_{\alpha,\eps,M}(1/\Grow(M))$ by Lemma \ref{gvn-twist} and the $U^{k-1}$ norm bound on $f_\unf$.  Finally, consider a term that involves at least one copy of $f_\sml$.  Suppose first that we have a term that involves $f_\sml(n)$.  Then after performing the average in $d$ using \eqref{ddn}, we see that this term is $O( \E_{n \in [N]} |f_\sml(n)|)$, which is $O(\eps)$ by the $L^2[N]$ bound on $f_\sml$ and the Cauchy-Schwarz inequality.  Similarly for any term that involves $f_\sml(n+id)$, after making a change of variables $(n',d) := (n+id,d)$.  Putting all this together we obtain the result. 
\end{proof}

It remains, of course, to establish Proposition \ref{bhk-prop}.  We may assume that $N$ is sufficiently large depending on $\alpha,\eps,M$, as the claim is trivial otherwise by taking $\mu$ to be a delta function.

We first establish the proposition in the easier of the two cases, namely the case $k=3$. This was previously considered in \cite{green-regularity}.  In this case it is actually easier to work with the (easier) weak regularity lemma, Proposition \ref{semi-reg}, in which the degree 1 polynomial sequence $g(n)$ is not required to be irrational. Note that we have not made any use of irrationality so far, though we shall do so later when discussing the case $k = 4$. We may identify $G/\Gamma$ with $(\R/\Z)^m$ for some $m = O_M(1)$ and, by modulating $F$ if necessary, we may suppose that $g(n) = \theta n$ is linear with no constant term, where $\theta \in \R^m$. Then $$ f_\nil(n) = F( n \theta ),$$ where $ F: (\R/\Z)^m \to \C$ has Lipschitz norm $O_M(1)$.  

Let $\eps' > 0$ be a small number depending on $\eps$ and $M$ to be chosen later, and let $B_1, B_2 \subseteq [-N,N]$ denote be the two Bohr sets 
$$ B_1 := \{ d \in [-\eps' N,\eps' N]:  \dist_{(\R/\Z)^{m}}(\theta d,0) \leq \eps'\}$$
and
$$ B_2 := \{ d \in [-\eps' N, \eps' N] : \dist_{(\R/\Z)^m}(\theta d ,0) \leq \eps'/2\}.$$
By the usual Dirichlet pigeonhole argument we see that $|B_2| \gg_{\eps',M} N$.  Also, from the Lipschitz nature of  $F$, we see that
$$ f_\nil(n+d) = f_\nil(n) + O_M(\eps')$$
whenever $d \in B_1$ and $n \in [-(1-\eps')N, (1-\eps')N]$.  As a consequence, it follows that 
$$ \E_{n \in [N]} f_\nil(n) f_\nil(n+d) f_\nil(n+2d) = \E_{n \in \N} f_\nil(n)^3 + O_M(\eps')$$
for such $d$.  However from \eqref{nnil} and H\"older's inequality one has
$$ \E_{n \in \N} f_\nil(n)^3 \geq \alpha^3 - O(\eps).$$
Proposition \ref{bhk-prop} (in the case $k = 3$) now follows by taking $\mu(d) = c \psi(\theta d)$, where $\psi : (\R/\Z)^m \rightarrow [0,1]$ is an $O_{M,\eps'}(1)$-Lipschitz function which is $1$ on $B_2$ and $0$ outside $B_1$, $c = O_{M,\eps'}(1)$ is a suitable normalisation constant, and by taking $\eps'$ to be suitably small.

We now turn to the $k=4$ case of Proposition \ref{bhk-prop}.  For simplicity let us first consider the model case when $f_\nil$ is a genuine nilsequence and not just a virtual nilsequence, that is to say
\begin{equation}\label{fnil-form}
f_\nil(n) = F(g(n) \Gamma)
\end{equation}
where $(G/\Gamma,G_\bullet)$ is a degree $\leq 2$ filtered nilmanifold of complexity $O_M(1)$, and $g \in \poly(\Z,G_\bullet)$ is $(\Grow(M),N)$-irrational.  By Taylor expansion (see Appendix \ref{appendix-a}), we have
$$ g(n) = g_0 g_1^n g_2^{\binom{n}{2}}$$
for some $g_0, g_1 \in G$ and $g_2 \in G_{(2)}$.  The $(\Grow(M),N)$-irrationality of $g$ ensures certain irrationality properties on $g_1$ and $g_2$, though we will not need these properties explicitly here, as we will only be using them through the counting lemma (Theorem \ref{count-lem}), which we shall be using as a black box.

Let $\pi: G \to T_1$ be the projection homomorphism to the torus\footnote{Note this is not quite the same thing as the \emph{horizontal torus}, which is so important in \cite{green-tao-nilratner}, which is $(G/\Gamma)_{\ab} := G/[G,G]\Gamma$.} $T:= G/(G_{(2)} \Gamma)$. Then
$$ \pi(g(n)) = \pi(g_0) \pi(g_1)^n.$$
Let $\eps' > 0$ be a small quantity depending on $\eps, M$ to be chosen later.  We set 
$$ \mu(d) := c 1_{[-\eps'N,\eps'N]}(d) \phi(\pi(g_1)^d),$$
where, much as in the analysis of the case $k = 3$, $\phi: T_1 \to \R^+$ is a smooth non-negative cutoff to the ball of radius $\eps'$ centered at the origin that is not identically zero, and $c$ is a normalisation constant to be chosen shortly.  From Theorem \ref{count-lem} one has
$$ \E_{d \in [-\eps'N,\eps'N]} \phi(\pi(g_1)^d) = \int_{T_1} \phi + o_{\Grow(M) \to \infty; \eps',M}(1)
+ o_{N \to \infty; \eps',M}(1).$$
Thus if we set
\begin{equation}\label{ctp}
 c := \frac{1}{\int_{T_1} \phi} = O_{\eps',M}(1)
 \end{equation}
then we have the normalisation \eqref{ddn}, if $\Grow$ is sufficiently rapid, depending on the way in which $\eps'$ depends on $\eps,M$, and $N$ is sufficiently large depending on $\eps,\eps',M$.
From the bound on $c$ we see that $\mu$ is a degree $\leq 1$ (and hence also degree $\leq 2$) nilsequence of complexity $O_{\eps',M}(1)$.

We now apply the counting lemma, Theorem \ref{count-lem}, to conclude that
\begin{align}\nonumber 
\E_{n,d \in [N]} f_\nil(n) f_\nil(n+d)&  f_\nil(n+2d) f_\nil(n+3d)\mu(d) \\ & = \int_{G^{\Psi}/\Gamma^{\Psi}} \tilde F + o_{\Grow(M) \to \infty; \eps',M}(1) + o_{N \to \infty; \eps',M}(1) \label{counting}
\end{align}
where $G^{\Psi} \subseteq G^4$ is the Leibman group associated to the (translation-invariant) collection $\Psi = (\psi_0,\psi_1,\psi_2,\psi_3): \Z^2 \to \Z^4$ of linear forms $\psi_i(\mathbf{n}) := n_1 + i n_2$, $i = 0,1,2,3$, that is to say the Hall-Petresco group $\HP^4(G)$, and $\tilde F: G^{\Psi} \to \C$ is the function
$$ \tilde F( x_0, x_1, x_2, x_3 ) := c \phi(\pi(x_1) \pi(x_0)^{-1}) F(x_0) F(x_1) F(x_2) F(x_3)$$
(here we use the identity $\pi(g(n+d))^{-1} \pi(g(n)) = \pi(g_1)^d$, immediately verified from the Taylor expansion).  

We now do some calculations in the Hall-Petrseco group very similar to those in \cite{bergelson-host-kra}. We saw in \S \ref{counting-sec} that
\[ G^{\Psi} = \{ (g_0, g_0 g_1 , g_0 g_1^2 g_2, g_0 g_1 g_2^3) : g_0 , g_1 \in G, g_2 \in G_{(2)}\} \] (note, of course, that $G_{(3)} = \id$ in the case we are considering).
For our calculations it is convenient to use the following obviously equivalent representation:
\begin{align*} G^{\Psi} = \{ (g_0 g_{2,0}, g_0 g_1 g_{2,1}, &  g_0 g_1^2 g_{2,2}, g_0 g_1^3 g_{2,3}): g_0,g_1 \in G; \\ &  g_{2,0},\ldots,g_{2,3} \in G_{(2)}; g_{2,0} g_{2,1}^{-3} g_{2,2}^3 g_{2,3}^{-1} = \id \}.\end{align*}
Here we have taken note of the fact that 
\[ \Psi^{[2]} = \{ (x_0, x_1, x_2, x_3) \in \R^4 : x_0 - 3x_1 + 3x_3 - x_3 = 0\}.\] This last equation is quite special in that it exhibits a certain ``positivity'', as we shall see later; this is key to our argument. 
The lattice $\Gamma^{\Psi}$ can be similarly described by requiring $g_0, g_1, g_{2,0},\ldots,g_{2,3}$ to also lie in $\Gamma$.  As a consequence of this, an arbitrary point of the nilmanifold $G^{\Psi}/\Gamma^{\Psi}$ can be parameterised uniquely as
\begin{equation}\label{point}
 (g_0 g_{2,0}, g_0 g_1 g_{2,1}, g_0 g_1^2 g_{2,2}, g_0 g_1^3 g_{2,3}) \Gamma^{\Psi}
\end{equation}
where $g_0, g_1$ lie in a fundamental domain $\Sigma_1 \subset G$ of the horizontal torus $T_1$ (i.e. a smooth manifold with boundary on which $\pi$ is a bijection from $\Sigma_1$ to $T_1$), and $g_{2,0},\ldots,g_{2,3}$ lie in a fundamental domain $\Sigma_2 \subset G_{(2)}$ of the \emph{vertical torus} $T_2 := G_{(2)} / \Gamma_{(2)}$ subject to the constraint $g_{2,0} g_{2,1}^{-3} g_{2,2}^3 g_{2,3}^{-1} \in \Gamma_{(2)}$.  For such a point \eqref{point}, the function $\tilde F$ takes the value
$$ c \phi( \pi(g_1) ) \prod_{j=0}^3 F( g_0 g_i^j g_{2,j} \Gamma ).$$
On the support of $\phi$, $g_1$ is a distance $O_M(\eps')$ from the identity (if the fundamental domain $\Sigma_1$ was chosen in a suitably smooth fashion), and so by the Lipschitz nature of $F$ and the boundedness of $g_0$ we have
$$ F( g_0 g_i^j g_{2,j} ) = F( g_0 g_{2,j} \Gamma ) + O_M(\eps').$$
As a consequence, the integral $\int_{G^{\Psi}/\Gamma^{\Psi}} \tilde F$ can be expressed as
\begin{equation}\label{lam}
c \int_{g_0 \in \Sigma_1} \int_{g_1 \in \Sigma_1} \phi(\pi(g_1)) \big( \int_{\substack{g_{2,0},\ldots,g_{2,3} \in T_2\\ g_{2,0} g_{2,1}^{-3} g_{2,2}^3 g_{2,3}^{-1}=\id}} \prod_{j=0}^3 F( g_0 g_{2,j} \Gamma ) + O_M(\eps')\big)
\end{equation}
where all integrals are with respect to Haar measure.

Let $\xi\in \hat T_2$ be a vertical character, i.e. a continuous homomorphism from $T_2$ to $\R/\Z$.  For any $x \in  G/\Gamma$, we can define the \emph{vertical Fourier transform} $\hat F(x,\xi)$ to be the quantity
$$ \hat F(x,\xi) := \int_{g_2 \in T_2} e(-\xi(g_2)) F(g_2 x).$$
From the Fourier inversion formula we have
$$ \int_{\substack{g_{2,0},\ldots,g_{2,3} \in T_2 \\ g_{2,0} g_{2,1}^{-3} g_{2,2}^3 g_{2,3}^{-1}=\id}} \prod_{j=0}^3 F( g_0 g_{2,j} \Gamma ) = \sum_{\xi \in \hat T_2} |\hat F(g_0,\xi)|^2 |\hat F(g_0,3\xi)|^2.$$
In particular, we have\footnote{This is the ``positivity'' alluded to earlier.  The argument is essentially that used in \cite{bergelson-host-kra}  and it is special to the $k=4$ case, which is of course consistent with the failure of Theorem \ref{bhk-thm2} to extend to $k \geq 5$.}
$$ \int_{\substack{g_{2,0},\ldots,g_{2,3} \in T_2 \\ g_{2,0} g_{2,1}^{-3} g_{2,2}^3 g_{2,3}^{-1}=\id}} \prod_{j=0}^3 F( g_0 g_{2,j} \Gamma ) \geq |\hat F(g_0,0)|^4.$$
Inserting this bound and \eqref{ctp} into \eqref{lam}, we conclude that
$$
\int_{G^{\Psi}/\Gamma^{\Psi}} \tilde F \geq
\int_{g_0 \in \Sigma_1} |\hat F(g_0 \Gamma,0)|^4 - O_M(\eps') - o_{\Grow(M) \to \infty; \eps',M}(1).$$
From Fubini's theorem we have
$$ \int_{g_0 \in \Sigma_1} \hat F(g_0 \Gamma,0) = \int_{G/\Gamma} F$$
and from Theorem \ref{count-lem}, \eqref{fnil-form} and \eqref{nnil} we have
$$ \int_{G/\Gamma} F = \alpha + O(\eps) + o_{\Grow(M) \to \infty; \eps',M}(1) + o_{N \to \infty; \eps',M}(1)
.$$
Applying H\"older's inequality, we conclude that
$$
\int_{G^{\Psi}/\Gamma^{\Psi}} \tilde F \geq
\alpha^4 - O(\eps) - O_M(\eps') - o_{\Grow(M) \to \infty; \eps',M}(1) - o_{N \to \infty; \eps',M}(1),$$
and so \eqref{fnil} follows from \eqref{counting}, if $\eps'$ is sufficiently small depending on $\eps,M$, $\Grow$ is sufficiently rapid depending on $\eps$, and $N$ is sufficiently large depending on $\eps',M$.

This concludes the proof of the $k=4$ case of Proposition \ref{bhk-prop} in the special case when $f_\nil(n) = F(g(n)\Gamma)$ with $g$ irrational. Unfortunately Theorem \ref{strong-reg} requires us to deal with the somewhat more general setting of virtual nilsequences, in which there is dependence on $n \mod q$ or $n/N$. The extra details required are fairly routine but notationally irritating. Let us now suppose, then, that
\begin{equation}\label{fnil-form-2}
f_\nil(n) = F(g(n) \Gamma, n \mod q, n/N).
\end{equation}
We let $\eps'$ be as before, but modify $\mu$ to now be given by
$$ \mu(d) := q 1_{q|d} c 1_{[-\eps'N,\eps'N]}(d) \phi(\pi(g_1)^d),$$
with $c$ still chosen by \eqref{ctp}.  As before, one can use Theorem \ref{count-lem} to establish \eqref{ddn}.

Now consider the left-hand side of the expression \eqref{fnil} we are to bound in Proposition \ref{bhk-prop}, that is to say  
\begin{equation}\label{to-bound-below} \E_{n,d \in [N]} f_\nil(n) f_\nil(n+d)f_\nil (n+2d) f_\nil (n + 3d) \mu(d). \end{equation}
Splitting into residue classes modulo $q$, we can express this as
\begin{align*}
c \E_{r \in [q]} \E_{n \in [N/q]} \E_{d \in [-\eps'N/q,\eps'N/q]} &\prod_{i=0}^3 F(g(qn+qid+r)\Gamma, r, \\ & q(n+ir)/N) \phi(\pi(g_1)^{qd}) + O_{N\to \infty;\eps',M}(1).
\end{align*}
We partition $[N/q]$ into intervals $P$ of length $\lfloor \eps' N\rfloor$ (plus a remainder of cardinality $O(\eps' N)$).  We can then rewrite the above expression as 
\begin{align*}
c \E_P \E_{r \in [q]} \E_{n \in P} & \E_{d \in [-\eps'N/q,\eps'N/q]} \prod_{i=0}^3 F(g(qn+qid+r)\Gamma, r,\\ & q(n+ir)/N) \phi(\pi(g_1)^{qd}) + O(\eps')+O_{N\to \infty;\eps',M}(1).
\end{align*}
For each such expression, we can use the Lipschitz nature of $F$ to replace $q(n+ir)/N$ by $qn_P/N$, where $n_P$ is an arbitrary element of $P$, losing only an error of $O_M(\eps')$.  The above expression thus becomes
\begin{align*} c \E_P \E_{r \in [q]} \E_{n \in P} \E_{d \in [-\eps'N/q,\eps'N/q]} & \prod_{i=0}^3 F(g(qn+qid+r)\Gamma, r, qn_P/N) \phi(\pi(g_1)^{qd}) \\ & + O_M(\eps') +O_{N\to \infty;\eps',M}(1).\end{align*}
Because the orbit $n \mapsto g(n) \Gamma$ is $(\Grow(M),N)$-irrational, we see from Lemma \ref{scaling-lemma} that shifted translate $n \mapsto g(q(n+n_P)+r) \Gamma$ is $(\gg_M \Grow(M),N)$-irrational.  We may then argue as in the previous case and bound the above average below by
\begin{align*} \geq \E_P \E_{r \in [q]} |\int_{G/\Gamma} F(\cdot,r,qn_P/N)|^4 & - O(\eps) - O_M(\eps')  \\ &- o_{\Grow(M) \to \infty; \eps',M}(1) - o_{N \to \infty;\eps',M}(1).\end{align*}
Using Theorem \ref{count-lem} again, we have
$$ \E_{n \in P} f_\nil(qn+r) = \int_{G/\Gamma} F(\cdot,r,qn_P/N) + o_{\Grow(M) \to \infty; \eps',M}(1) + o_{N \to \infty;\eps',M}(1)$$
and so \eqref{to-bound-below} is at least
\begin{align*} \geq \E_P \E_{r \in [q]} |\E_{n \in P} f_\nil(qn+r)|^4 & - O(\eps) - O_M(\eps') \\ &- o_{\Grow(M) \to \infty; \eps',M}(1) - o_{N \to \infty;\eps',M}(1).\end{align*}
Now from \eqref{nnil} and double-counting one has
$$ \E_P \E_{r \in [q]} \E_{n \in P} f_\nil(qn+r) = \alpha + O(\eps)$$ and so, from H\"older's inequality, we deduce that \eqref{to-bound-below} is 
\[ \geq \alpha^4 - O(\eps) - O_M(\eps') - o_{\Grow(M) \to \infty; \eps',M}(1) - o_{N \to \infty;\eps',M}(1).\]
Proposition \ref{bhk-prop} now follows by once again choosing $\eps'$ small enough depending on $\eps,M$, and choosing $\Grow$ rapid enough depending on $\eps$, and $N$ sufficiently large depending on $\eps,\eps',M$.

\newcommand\vol{\operatorname{vol}}
\section{Proof of Szemer\'edi's theorem}\label{szem-sec}

We turn now to the proof of Szemer\'edi's theorem. We deemed this result too famous to state in the introduction but, for the sake of fixing notation, we recall it here now. It is most natural to establish what might be called the ``functional'' form of the theorem which is \emph{a priori} a stronger statement (though quite easily shown to be equivalent to the standard formulation by an argument of Varnavides \cite{var}).

\begin{theorem}[Szemer\'edi's theorem]\label{szthm}
Let $0 < \alpha \leq 1$, let $k \geq 3$, and let $N \geq 1$.   If $f : [N] \rightarrow [0,1]$ is a function with $\E_{n \in [N]} f(n) \geq \alpha$ then 
\[ \Lambda_k(f,f,\dots,f) \gg_{k,\alpha} 1,\] 
where
\[ \Lambda_k(f_1,\dots,f_k) := \E_{n \in [N]; d \in [-N,N]} f_1(n) f_2(n+d) \dots f_k(n + (k-1)d)\] is the multilinear operator counting arithmetic progressions.
\end{theorem}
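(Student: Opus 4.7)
The strategy mimics the Bergelson--Host--Kra proof of Section~\ref{bhk-sec}. Apply the arithmetic regularity lemma (Theorem~\ref{strong-reg}) to $f$ with degree $s := k-2$, a small $\eps = \eps(\alpha,k) > 0$, and a sufficiently rapidly growing $\Grow$ depending on $\alpha, k$, obtaining $M = O_{\alpha,k}(1)$ and a decomposition $f = f_{\nil} + f_{\sml} + f_{\unf}$; in particular $f_{\nil}$ and $f_{\nil}+f_{\sml}$ are nonnegative, $f_{\nil}$ is a $(\Grow(M),N)$-irrational virtual nilsequence of degree $\leq k-2$ and complexity $\leq M$, and $\E f_{\nil} \geq \alpha - O(\eps)$. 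I would then construct a nonnegative ``nil-Bohr'' weight $\mu : [-N,N] \to \R^+$ of complexity $O_{\alpha,k}(1)$, pointwise bounded by some $C = C(\alpha,k)$, with $\E_{d \in [-N,N]} \mu(d) = 1 + O(\eps)$, supported on a set of $d$ for which $g(n+id)\Gamma$ is $O_M(\eps')$-close to $g(n)\Gamma$ for all $0 \leq i \leq k-1$ and all $n \in [N]$. Since $f \geq 0$ and $\mu \leq C$ pointwise, it would then suffice to bound the weighted average
$$ J := \E_{n \in [N], d \in [-N,N]} \Bigl(\prod_{i=0}^{k-1} f(n+id)\Bigr) \mu(d)$$
from below by $\gg_{\alpha,k} 1$, since then $\Lambda_k(f,\ldots,f) \geq J/C \gg_{\alpha,k} 1$.

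Multilinearly expanding $J$ using $f = f_{\nil} + f_{\sml} + f_{\unf}$ produces $3^k$ pieces. The pieces containing $f_{\unf}$ are controlled by the twisted generalised von Neumann lemma (Lemma~\ref{gvn-twist}), using that $\mu$ is itself a degree $\leq 1$ nilsequence of complexity $O_{\alpha,k}(1)$: each such piece is $\ll_{\alpha,k} \|f_{\unf}\|_{U^{k-1}[N]} \leq 1/\Grow(M)$, negligible when $\Grow$ is chosen sufficiently rapid. A piece containing $f_{\sml}$ but no $f_{\unf}$ is bounded by pulling out $|f_{\sml}|$, bounding $\mu$ by $C$ and the other factors pointwise by $1$, and using $\E_m |f_{\sml}(m)| \leq \|f_{\sml}\|_{L^2[N]} \leq \eps$ via Cauchy--Schwarz; by Fubini in the shift parameter this piece contributes $O_{\alpha,k}(\eps)$.

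The main term is $J(f_{\nil},\ldots,f_{\nil})$. On the support of $\mu$, the Lipschitz continuity of the function $F$ defining $f_{\nil}$ forces $f_{\nil}(n+id) = f_{\nil}(n) + O_M(\eps')$ uniformly in $0 \leq i \leq k-1$, and since $f_{\nil} \geq 0$, Jensen's inequality $\E_n f_{\nil}(n)^k \geq (\E_n f_{\nil}(n))^k$ gives
$$ \E_n \prod_{i=0}^{k-1} f_{\nil}(n+id) \geq (\alpha - O(\eps))^k - O_M(\eps') \geq \alpha^k - O(\eps) - O_M(\eps')$$
uniformly for $d$ in the support of $\mu$. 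Combined with $\E_d \mu(d) = 1 + O(\eps)$, this yields $J(f_{\nil},\ldots,f_{\nil}) \geq \alpha^k - O(\eps) - O_M(\eps')$. Choosing $\eps$ and $\eps'$ sufficiently small depending on $\alpha, k$ (recalling $M = O_{\alpha,k}(1)$), all error terms are absorbed and $J \gg \alpha^k$, hence $\Lambda_k(f,\ldots,f) \gg_{\alpha,k} 1$ as required.

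The main obstacle is the construction of $\mu$ for $s \geq 2$. Writing the Taylor expansion $g(n) = g_0 g_1^n g_2^{\binom{n}{2}} \cdots g_s^{\binom{n}{s}}$, one would take $\mu(d)$ to be a suitably normalised product of smooth Bohr cutoffs $\phi_j(\pi_j(g_j)^d)$ for $1 \leq j \leq s$ on the horizontal tori of the successive quotients $G_{(j)}/G_{(j+1)}$, times $1_{[-\eps'N,\eps'N]}(d)$. The counting lemma (Theorem~\ref{count-lem}), applied to the polynomial sequence $d \mapsto g(d)$ and projected onto the product of horizontal tori, together with the $(\Grow(M),N)$-irrationality of $g$, ensures that this cascade of nested Bohr constraints is jointly satisfiable on a set of positive density, that $\mu$ has mass of the correct order, and that on its support the shift $g(n+id) g(n)^{-1}$ lies $O_M(\eps')$-close to $\Gamma$ uniformly in $n \in [N]$. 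This is a direct generalisation to arbitrary $s$ of the nested Bohr set argument appearing in the $k = 3, 4$ cases of Proposition~\ref{bhk-prop}, and is where the full strength of the irrationality hypothesis is used.
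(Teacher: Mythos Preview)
Your proposal has a genuine gap in the construction of $\mu$ for $k > 3$. You claim that a weight $\mu(d)$ depending on $d$ alone, built from Bohr cutoffs $\phi_j(\pi_j(g_j)^d)$ on the successive horizontal tori, will force $g(n+id)\Gamma$ to be $O_M(\eps')$-close to $g(n)\Gamma$ \emph{uniformly in $n \in [N]$}. This is false once $s = k-2 \geq 2$. Already in the abelian degree-$2$ model $g(n) = \alpha_1 n + \alpha_2 \binom{n}{2}$ on $\R/\Z$, one has
\[
g(n+d) - g(n) = \alpha_1 d + \alpha_2 \binom{d}{2} + \alpha_2 n d,
\]
and the cross term $\alpha_2 n d$ cannot be made small modulo $1$ for all $n \in [N]$ by any condition on $d$ alone short of $\|\alpha_2 d\|_{\R/\Z} \ll 1/N$; but for $(\Grow(M),N)$-irrational $\alpha_2$ the set of such $d$ in $[-N,N]$ has size $O(1)$, not $\gg N$. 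Thus no $d$-only nil-Bohr weight can have both positive normalised mass and the uniform-in-$n$ approximation property you need. This is precisely why the paper remarks that for $k>3$ ``$f_\nil$ is no longer quasiperiodic, and so $\mu(n,d)$ will have to depend more heavily on $n$ and not just on $d$.''

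The paper's repair is to take $\mu(n,d)$ of the form \eqref{monkey}, with factors $\Phi_r(g(n)\Gamma, g(n+id)\Gamma)$ that \emph{directly} enforce closeness of the orbit points. But once $\mu$ depends on $n$, your treatment of the $f_\sml$ pieces (bounding $\mu$ pointwise and integrating out) no longer gives an error uniform in $M$: the pointwise bound on $\mu$ is $O_{M,\eps}(1)$, not $O_{\alpha,k}(1)$, and $M$ is chosen only after $\eps$. The paper handles this via the additional equidistribution property \eqref{note}, which says that the marginals $\E_d \mu(n-id,d)$ are close to $1$ in $L^2[N]$; establishing this requires the approximate factorisation Claim~\ref{approx-factor} for the Leibman group of the ``doubled'' system $\Psi^{(i)}$, and this is where the real work lies.
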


We now prove this theorem.  We fix $k, \alpha$, and allow implied constants to depend on these quantities.

As usual, we begin by applying the regularity lemma, Theorem \ref{strong-reg}.  In view of the generalised von Neumann theorem, Theorem \ref{cs-lemma}, it is natural to apply this theorem with $s=k-2$ (which, as remarked in \S \ref{gvn-sec}, is the Cauchy-Schwarz complexity $s = s(\Psi)$ of the system $\Psi$ of  linear forms $n_1,n_1+n_2,\ldots,n_1+(k-1)n_2$).  If we do so, with a small parameter $\eps > 0$ depending on $\alpha,k$ to be chosen later, and a growth function $\Grow$ depending on $\alpha,k,\eps$ to be specified later, we obtain a decomposition
\begin{equation}\label{f-expand}
f(n) = f_{\nil}(n) + f_{\sml}(n) + f_{\unf}(n)
\end{equation}
where
\begin{itemize}
\item[(i)] $f_\nil$ is a $(\Grow(M),N)$-irrational degree $\leq k-2$ virtual nilsequence of complexity $\leq M$ and scale $N$;
\item[(ii)] $f_\sml$ has an $L^2[N]$ norm of at most $\eps$;
\item[(iii)] $f_\unf$ has an $U^{k-1}[N]$ norm of at most $1/\Grow(M)$;
\item[(iv)] $f_\nil, f_\sml, f_\unf$ are all bounded in magnitude by $1$; and
\item[(v)] $f_\nil$ and $f_\nil+f_\sml$ are non-negative.
\end{itemize}

As we shall soon see, the contribution of $f_\unf$ can be quickly discarded using the generalised von Neumann theorem.  If one could also easily discard the contribution of the small term $f_\sml$, then matters would simply reduce to verifying that the contribution of $f_\nil$ is bounded away from zero, which would be an easy consequence of the counting lemma.  Unfortunately the small term $f_\sml$ is only moderately small (of size $O(\eps)$) rather than incredibly small (e.g. of size $O(1/\Grow(M))$), and so one has to take a certain amount of care in dealing with this term, which makes the analysis significantly more delicate\footnote{In the language of ergodic theory, the problem here is that the characteristic factor is not necessarily a nilsystem, but may merely be a pro-nilsystem - an inverse limit of nilsystems.}.

We turn to the details.  
Much as the key to proving Theorem \ref{bhk-thm} was to establish Proposition \ref{bhk-prop}, the key to establishing Szemer\'edi's theorem is the following proposition.

\begin{proposition}[Szemer\'edi for $f_\nil$]\label{szem-prop}  Let $f_\nil$ be as above, and let $\eps > 0$.  Then there exists a function $\mu: \Z \times \Z  \to \R^+$ supported on the set
\begin{equation}\label{nand}
 \{ (n,d) \in \Z \times \Z: d \in [-\eps N,\eps N]; n+id \in [N] \hbox{ for all } i=0,\ldots,k-1 \}
 \end{equation}
with
\begin{equation}\label{mu-norm}
\E_{n \in [N]; d \in [-\eps N,\eps N]} \mu(n,d) = 1 + O(\eps)
\end{equation}
and with $\mu$ bounded in magnitude by $O_{M,\eps}(1)$, such that
\begin{equation}\label{fnil-near}
f_\nil(n+id) = f_\nil(n) + O(\eps)
\end{equation}
whenever $0 \leq i \leq k-1$ and $\mu(n,d) \neq 0$, and such that one has the equidistribution property
\begin{equation}\label{note}
 \E_{n \in [N]} |\E_{d \in [-\eps N,\eps N]} \mu(n-id,d)|^2 = 1 + O(\eps)
\end{equation}
for all $0 \leq i \leq k-1$.
\end{proposition}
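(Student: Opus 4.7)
Write $f_\nil(n) = F(g(n)\Gamma, n \bmod q, n/N)$, where $g \in \poly(\Z, G_\bullet)$ is $(\Grow(M),N)$-irrational on a degree $\leq k-2$ filtered nilmanifold $(G/\Gamma, G_\bullet)$ of complexity $\leq M$, $1 \leq q \leq M$, and $\|F\|_{\Lip} \leq M$. The plan is to build $\mu(n,d)$ as a product of independent cutoffs, each ensuring that one of the three coordinates of $F$ moves by $O(\eps)$ when $n$ is replaced by $n+id$. Let $\eps_0 := c_k \eps/M$ for a small constant $c_k > 0$, and fix a Lipschitz function $\Phi : (G/\Gamma)^k \to [0,1]$ supported in an $\eps_0$-neighbourhood of the diagonal $\{(x,\ldots,x)\}$, equal to $1$ on the $\eps_0/2$-neighbourhood and chosen symmetrically so that diagonal-fibre integrals are approximately constant. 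Define
\[ \mu(n,d) := Cq \cdot \mathbf{1}_{q \mid d} \cdot \mathbf{1}_{|d| \leq \eps_0 N} \cdot \mathbf{1}_{n+id \in [N]\, \forall 0 \leq i \leq k-1} \cdot \Phi\big(g(n)\Gamma, g(n+d)\Gamma, \ldots, g(n+(k-1)d)\Gamma\big), \]
with $C = C(M,\eps) > 0$ a normalisation constant fixed below. The support condition (1) is immediate; for (4), if $\mu(n,d) \neq 0$ then $q \mid d$, $|d| \leq \eps_0 N$, and $d_{G/\Gamma}(g(n+id)\Gamma, g(n)\Gamma) \leq \eps_0$, so the Lipschitz bound on $F$ gives $|f_\nil(n+id) - f_\nil(n)| \leq M \cdot O_k(\eps_0) \ll \eps$; and (3) follows once $C = O_{M,\eps}(1)$.

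\textbf{Properties (2) and (5) via the counting lemma.} By Lemma \ref{motiv-lemma}, the polynomial sequence $g^\Psi(n,d) := (g(n+id))_{i=0}^{k-1}$ takes values in the Hall--Petresco group $G^\Psi \leq G^k$ for the $k$-AP system $\Psi$. Applying Theorem \ref{count-lem} to $g^\Psi$, to the convex body $P := \{(n,d) \in [-N,N]^2 : |d| \leq \eps_0 N,\, n+id \in [N]\,\forall i\}$ of volume $2\eps_0 N^2(1+O_k(\eps_0))$, and to the sublattice $\Z \times q\Z$ of index $q$, yields
\[ \E_{n \in [N]; d \in [-\eps N, \eps N]} \mu(n,d) = \frac{C\eps_0}{\eps} \int_{G^\Psi/\Gamma^\Psi} \Phi + o(1), \]
with $o(1)$ vanishing as $\Grow(M) \to \infty$ and as $N \to \infty$. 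Since $\int_{G^\Psi/\Gamma^\Psi} \Phi \gg_{M,\eps} 1$, setting $C := \eps/\big(\eps_0 \int_{G^\Psi/\Gamma^\Psi} \Phi\big) = O_{M,\eps}(1)$ establishes (2). For (5), expand
\[ \E_n \Big|\E_d \mu(n-id,d)\Big|^2 = \E_{n,d_1,d_2} \mu(n-id_1, d_1)\mu(n-id_2, d_2), \]
which is an average of the polynomial orbit $(n, d_1, d_2) \mapsto (g(n + (i'-i)d_j))_{0 \leq i' \leq k-1,\, j \in \{1,2\}}$ in the Leibman group $G^{\Psi'} \leq G^{2k}$ associated to the $2k$-form system $\psi^{(j)}_{i'}(n,d_1,d_2) := n+(i'-i)d_j$. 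A second application of Theorem \ref{count-lem} evaluates this as $\frac{C^2 \eps_0^2}{\eps^2} \int_{G^{\Psi'}/\Gamma^{\Psi'}} (\Phi \otimes \Phi) + o(1)$.

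\textbf{Main obstacle.} The crux of the argument is to verify the factorisation identity
\[ \int_{G^{\Psi'}/\Gamma^{\Psi'}} \Phi \otimes \Phi = \bigg(\int_{G^\Psi/\Gamma^\Psi} \Phi\bigg)^2 + O(\eps), \]
which together with the choice of $C$ immediately yields (5). This identity reflects a fibre-product structure on $G^{\Psi'}$: the two subsystems $(\psi^{(1)}_{i'})_{i'}$ and $(\psi^{(2)}_{i'})_{i'}$ define two copies of $G^\Psi$ inside $G^{\Psi'}$ that meet only in the slice at index $i$ (since $\psi^{(1)}_i = \psi^{(2)}_i = n$), so that $G^{\Psi'}/\Gamma^{\Psi'}$ is essentially a fibre product of two copies of $G^\Psi/\Gamma^\Psi$ over their common $(G/\Gamma)$-slice. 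One confirms this structure by a Mal'cev basis computation analogous to those of \S\ref{counting-sec} applied to the generating vectors of $(\Psi')^{[l]}$; once this is in hand, Fubini's theorem together with the approximate diagonal invariance of $\Phi$ produces the factorisation. The structural analysis of the enlarged Leibman group $G^{\Psi'}$ is where the bulk of the technical work lies, and it is the principal novelty over the arguments used in the $k=3, 4$ cases of Proposition \ref{bhk-prop}.
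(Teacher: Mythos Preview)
Your outline captures the skeleton of the paper's argument correctly: the construction of $\mu$ via a near-diagonal cutoff and the congruence/short-$d$ restrictions, the use of the counting lemma to normalise and to expand the square in \eqref{note}, and the identification of the enlarged Leibman group $G^{\Psi'}$ as a fibre product of two copies of $G^\Psi$ over the $i^{\text{th}}$ coordinate. That last structural fact is exactly what the paper proves (by a dimension count), and it is the right way to set up the factorisation.

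The genuine gap is in the phrase ``chosen symmetrically so that diagonal-fibre integrals are approximately constant'' and the later appeal to ``approximate diagonal invariance of $\Phi$''. This is precisely the non-trivial step, and it does \emph{not} follow from any naive symmetry in the construction of a metric-ball cutoff. The fibre of $G^\Psi/\Gamma^\Psi$ over a point $x_0\in G/\Gamma$ is carried to the fibre over $gx_0$ by left multiplication by $g^\Delta=(g,\ldots,g)$, and Haar measure is preserved; but an $\eps_0$-tube around the diagonal in the $d_{G/\Gamma}$-metric is \emph{not} $G^\Delta$-invariant, because $d_{G/\Gamma}$ is not left-invariant. Hence $\int_{\text{fibre at }x_0}\Phi$ can vary with $x_0$ by a factor that depends on $M$, not just on $\eps$, and the desired factorisation $\int_{G^{\Psi'}/\Gamma^{\Psi'}}\Phi\otimes\Phi=(\int_{G^\Psi/\Gamma^\Psi}\Phi)^2+O(\eps)$ fails in general.

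The paper repairs this with three ingredients you are missing. First, it builds the near-diagonal cutoff not from the metric on $G/\Gamma$ but from a function $\phi_r$ on $G$ supported on an \emph{eccentric} ball $B_r$ (narrow in the abelianised directions, wide in the top-step directions), so that conjugation by bounded elements only perturbs $B_r$ to $B_{(1\pm\delta)r}$; this gives approximate left-invariance $\Phi_{(1-3\delta)r}(x,x')\leq\Phi_r(gx,gx')\leq\Phi_{(1+3\delta)r}(x,x')$. Second, a pigeonhole on $r$ (the ``regular radius'' trick of Bourgain) finds a scale at which $v_{(1\pm3\delta)r}=(1+O(\eps))v_r$, converting the inclusion into a genuine $O(\eps)$-invariance. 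Third, an averaging over translates $g\in B_R$ and a lattice-point count then yield the factorisation up to $O(\eps)$. Without some mechanism of this kind, the $O(\eps)$ error in \eqref{note} cannot be secured, and the whole point of the proposition --- that the errors are uniform in $M$ --- is lost.
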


The crucial feature of Proposition \ref{szem-prop} for us is that, with the exception of the uniform bound on $\mu$, the error terms here decay as $\eps \to 0$, even if the complexity bound $M$ on $f_\nil$ is extremely large compared to $1/\eps$.

The reader may benefit from a few words about the role of the function $\mu$. Supposing that $f_{\nil}(n) = F(g(n)\Gamma)$ is a genuine nilsequence, this function acts like a kind of ``weight'' on progressions $(n, n+d,\dots, n + (k-1)d)$ which are ``almost diagonal'' in the sense that $g(n)\Gamma \approx \dots \approx g(n+(k-1)d)\Gamma$ in $G/\Gamma$. The condition \eqref{note} reflects the fact that the weighted number of almost diagonal progressions whose $i$th point is $n$ is roughly independent of $n$. This ``non-concentration'' of almost diagonal progressions ultimately means that the error $f_\sml$ cannot destroy too many of these progressions, a fact that is crucial for our argument.

Let us assume Proposition \ref{szem-prop} for now and see how it implies Theorem \ref{szthm}.  We use \eqref{f-expand} to expand out the form $\Lambda_k(f,\ldots,f)$ into $3^k$ terms.  By Theorem \ref{cs-lemma}, any term that involves $f_\unf$ will be of size $O( 1 / \Grow(M) )$, thus
\begin{equation}\label{lack}
 \Lambda_k(f,\ldots,f) = \Lambda_k(f_\nil+f_\sml,\ldots,f_\nil+f_\sml) + O(1/\Grow(M)).
\end{equation}
Next, we use the weight $\mu$ arising from Proposition \ref{szem-prop} and the non-negativ-ity of $f_\nil+f_\sml$ guaranteed by Theorem \ref{strong-reg} to write
\begin{align*}
 & \Lambda_k  (f_\nil+f_\sml,\ldots,f_\nil+f_\sml)  \\ & \gg_{M,\eps} 
 \E_{n \in [N]; d \in [-\eps N, \eps N]} (f_\nil+f_\sml)(n) \ldots
(f_\nil+f_\sml)(n+(k-1)d) \mu(n,d).\end{align*}
We then expand this latter average into the sum of $2^k$ terms.  The main term is
\begin{equation}\label{mainterm}
 \E_{n \in [N]; d \in [-\eps N, \eps N]} f_\nil(n) \ldots
f_\nil (n+(k-1)d) \mu(n,d),
\end{equation}
and the other terms are error terms, involving at least one factor of $f_\sml$.

Consider one of the error terms, involving the factor $f_\sml(n+id)$ (say) for some $0 \leq i \leq k-1$.  We can bound the contribution of this term by
$$ \E_{n \in [N]; d \in [-\eps N, \eps N]} |f_\sml(n+id)| \mu(n,d),$$
which by a change of variables $n \mapsto n-id$ we can write as
$$ \E_{n \in [N]} |f_\sml(n)| \E_{d \in [-\eps N,\eps N]} \mu(n-id,d).$$
By Cauchy-Schwarz, \eqref{note}, and the $L^2[N]$ bound on $f_\sml$, this is $O(\eps)$.  

Finally, we look at the main term \eqref{mainterm}.  Using \eqref{fnil-near} we can approximate
$$ f_\nil(n) \ldots f_\nil (n+(k-1)d) = f_\nil(n)^k + O(\eps)$$
and so (using \eqref{mu-norm}) we can write \eqref{mainterm} as
$$ \E_{n \in [N]} f_\nil(n)^k \E_{d \in [-\eps N,\eps N]} \mu(n,d) + O(\eps).$$
Now, from \eqref{mu-norm} one has
$$ \E_{n \in [N]} \E_{d \in [-\eps N,\eps N]} \mu(n,d) = 1 + O(\eps)$$
and hence by \eqref{note}
$$ \E_{n \in [N]} |\E_{d \in [-\eps N,\eps N]} \mu(n,d) - 1|^2 = O(\eps).$$
In particular, by Chebyshev's inequality, we have
$$ \E_{d \in [-\eps N,\eps N]} \mu(n,d) = 1 + O(\eps^{1/3})$$
for all $n \in E$, where $E \subseteq [N]$ has cardinality $|E| \geq (1-O(\eps^{1/3})) N$.  Thus, for $\eps$ small enough, we can bound \eqref{mainterm} from below by
$$ \gg \E_{n\in [N]} 1_E(n) f_\nil(n)^k - O(\eps^{1/3}).$$
Now from hypothesis we have $\E_{n \in [N]} f(n) \gg 1$.  From Cauchy-Schwarz we have \[ \E_{n \in [N]} f_\sml(n) = O(\eps),\] and from Theorem \ref{cs-lemma} we also have \[ \E_{n \in [N]} f_\unf(n) = O(\eps)\] if $\Grow$ is rapid enough.  Thus if $\eps$ is small enough we have $\E_{n \in [N]} f_\nil(n) \gg 1$,
which implies that
$ \E_{n \in [N]} 1_E(n) f_\nil(n) \gg 1$, and hence by H\"older's inequality that
$ \E_{n \in [N]} 1_E(n) f_\nil^k(n) \gg 1$.
Putting all this together, we conclude that \eqref{mainterm} is $\gg 1$ if $\eps$ is small enough, and thus
$$  \Lambda_k(f_\nil+f_\sml,\ldots,f_\nil+f_\sml) \gg_{M,\eps} 1.$$
Inserting this bound into \eqref{lack} we obtain the claim, completing the proof of Szemer\'edi's theorem,  if $\Grow$ is chosen sufficiently rapid.\vspace{11pt}

\emph{Proof of Proposition \ref{szem-prop}.}  Let us first establish this in the easy case $k=3$.  In this case, $f_\nil$ is essentially quasiperiodic, which will allow us to take $\mu(n,d)$ to be of the form 
$$ \mu(n,d) = 1_{[2\eps N, (1-2\eps)N]}(n) \mu(d)$$
with $\mu(d)$ normalised by requiring
$$ \E_{d \in [-\eps N,\eps N]} \mu(d) = 1 + O(\eps).$$
It is then easy to verify that both \eqref{mu-norm} and \eqref{note} follow from this normalisation.  To establish the remaining claims in Proposition \ref{szem-prop}, we use the degree $\leq 1$ nature of the orbit $n \mapsto g(n) \Gamma$ as in Section \ref{bhk-sec} to write $f_\nil$ as
$$ f_\nil(n) = F( n \theta )$$
for some $\theta \in (\R/\Z)^D$ with $D = O_M(1)$ and some $F: (\R/\Z)^D \to \C$ of Lipschitz constant $O_M(1)$.  If one then sets $\mu$ to equal
$$ \mu(d) := \frac{|[-\eps N,\eps N]|}{|B|} 1_B(d)$$
where $B$ is the Bohr set
$$ \{ d \in [-\eps N,\eps N]: d_{(\R/\Z)^D}(d\theta, 0) \leq \delta \}$$
and $\delta > 0$ is sufficiently small depending on $\eps, M$, one easily verifies all the required claims.

We now turn to the case $k>3$, which is harder because $f_\nil$ is no longer quasiperiodic, and so $\mu(n,d)$ will have to depend more heavily on $n$ and not just on $d$.
By arguing as in the previous section we can normalise $g(0)$ to equal $\id$.  We may also assume $N$ is sufficiently large depending on $\eps, M$, since otherwise we may simply take $\mu(n,d) = 1_{[N]}(n) \delta_0(d)$ where $\delta_0$ is the Kronecker delta function at $0$.  We may of course also assume that $\eps < 1$.

We take an $O_M(1)$-rational Mal'cev basis $X_1,\dots, X_{\dim(G)}$ for the Lie algebra ${\mathfrak g} = \log G$ adapted to the filtration $G_{\bullet}$ as described in \cite[Appendix A]{green-tao-nilratner}. For any radius $r>0$, we define the ``ball'' $B_r$ in $G$ to be the set of all group elements of the form
\begin{equation}\label{etx}
 \exp( \sum_{j=1}^{\dim(G)} t_j X_j )
\end{equation}
where the $t_j$ are real numbers with $t_j \leq r^{s+1-i}$
whenever $1 \leq i \leq s$ and $j \leq \dim(G)-\dim(G_{(i)})$.   Thus, when $r$ is small, $B_r$ is quite ``narrow'' (of diameter comparable to $r^s$) when projected down to $G/G_{(2)}$, but is relatively large when restricted to the top order component $G_{(s)}$ (of diameter comparable to $r$).  This type of eccentricity is necessary in order to make $B_r$ approximately ``normal'' with respect to conjugations.  Indeed, we have

\begin{lemma}[Approximate normality]\label{normal}  Let $A, \delta > 0$, and let $g \in G$ be such that $d_G(g,\id) \leq A$.  Then we have the containments
\begin{equation}\label{bepsr}
 B_{(1-\delta) r} \subseteq g B_r g^{-1} \subseteq B_{(1+\delta) r}.
\end{equation}
whenever $r > 0$ is sufficiently small depending on $A, \delta, M$.
\end{lemma}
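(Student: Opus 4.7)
The plan is to pass from conjugation in $G$ to the adjoint action on the Lie algebra ${\mathfrak g} = \log G$ and exploit the crucial observation that $\operatorname{Ad}(g) X - X$ always lies \emph{deeper} in the filtration than $X$ itself, thanks to the property $[{\mathfrak g}, {\mathfrak g}_{(i)}] \subseteq {\mathfrak g}_{(i+1)}$. The eccentric radii $r^{s+1-i}$ defining $B_r$ are tuned precisely so that these extra ``deep'' corrections remain within the permitted tolerance at their own depth.

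Let $i(j)$ denote the depth of the Mal'cev basis vector $X_j$ (the largest $i$ with $X_j \in {\mathfrak g}_{(i)}$), so that $u = \exp(\sum_j t_j X_j)$ lies in $B_r$ iff $|t_j| \leq r^{s+1-i(j)}$ for each $j$. Writing $\operatorname{Ad}(g) = \exp(\operatorname{ad}(\log g))$ and using the iterated bracket property $\operatorname{ad}(\log g)^m X_j \in {\mathfrak g}_{(i(j)+m)}$, I would expand
$$ \operatorname{Ad}(g) X_j = X_j + \sum_{k \,:\, i(k) > i(j)} c_{jk}(g) X_k, $$
where the $c_{jk}(g)$ are polynomials in the Mal'cev coordinates of $\log g$ with coefficients built from the Lie algebra structure constants. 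The hypothesis $d_G(g,\id) \leq A$ together with the rationality bound $M$ on the Mal'cev basis then yields $|c_{jk}(g)| \leq C_{A,M}$. Since $g u g^{-1} = \exp(\operatorname{Ad}(g) \log u) = \exp(\sum_k t_k' X_k)$, the new coordinates take the form
$$ t_k' = t_k + \sum_{j \,:\, i(j) < i(k)} t_j\, c_{jk}(g). $$

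The main estimate is then a comparison of sizes at each depth. For $i(j) < i(k)$ one has $s+1-i(j) \geq (s+1-i(k))+1$, so $|t_j| \leq r^{s+1-i(j)} \leq r \cdot r^{s+1-i(k)}$; summing over the at most $\dim(G) = O_M(1)$ relevant $j$ gives
$$ \Bigl| \sum_{j \,:\, i(j) < i(k)} t_j\, c_{jk}(g) \Bigr| \leq O_{A,M}(1) \cdot r \cdot r^{s+1-i(k)}, $$
which is at most $\delta \cdot r^{s+1-i(k)}$ once $r$ is small in terms of $A, M, \delta$. Hence $|t_k'| \leq (1+\delta) r^{s+1-i(k)} \leq ((1+\delta)r)^{s+1-i(k)}$, the last step using $s+1-i(k) \geq 1$; this establishes $gB_rg^{-1} \subseteq B_{(1+\delta)r}$. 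The reverse inclusion $B_{(1-\delta)r} \subseteq gB_rg^{-1}$ follows by applying the same argument to $g^{-1}$ (whose distance to the identity remains $O_M(A)$) with $r$ replaced by $(1-\delta)r$. The delicate point is the size comparison in the third paragraph: one must verify that the filtration-raising of $\operatorname{Ad}(g)$ precisely matches the eccentric scaling of $B_r$, since a uniform choice of radii $r$ would fail to absorb the correction terms at the shallow-depth coordinates.
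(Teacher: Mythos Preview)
Your proof is correct and follows essentially the same approach as the paper's: both pass to the Lie algebra via $\operatorname{Ad}(g) = \exp(\operatorname{ad}(\log g))$, use the filtration-raising property $[{\mathfrak g},{\mathfrak g}_{(i)}] \subseteq {\mathfrak g}_{(i+1)}$ to see that $\operatorname{Ad}(g)X_j - X_j$ lies strictly deeper than $X_j$, and then exploit the eccentric radii $r^{s+1-i}$ to absorb the correction terms with an extra factor of $r$. Your treatment is in fact somewhat more explicit than the paper's in writing out the transformed coordinates $t_k'$ and carrying through the size comparison; the paper compresses this into a single line and derives the first inclusion from the second just as you do.
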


\begin{proof}  We prove the second inclusion only, as the first is similar (and can also be deduced from the second).  The conjugation action $h \mapsto ghg^{-1}$ on $G$ induces a Lie algebra automorphism $\exp(\operatorname{ad}(\log g)): {\mathfrak g} \to {\mathfrak g}$.  If we conjugate the group element \eqref{etx} by $g$, we thus obtain
$$
 \exp( \sum_{j=1}^{\dim(G)} t_j \exp(\operatorname{ad}(\log g))(X_j) ).$$
 But if $1 \leq i \leq s$ and $j \leq \dim(G)-\dim(G_{(i)})$, we see from the Baker-Campbell-Hausdorff formula \eqref{comm-form} that
$$ \exp(\operatorname{ad}(\log g))(X_j) = X_j + \sum_{j'=\dim(G)-\dim(G_{(i)})+1}^{\dim(G)} c_{j,j'} X_{j'}$$
for some coefficients $c_{j,j'}$ of size $O_{A,M}( r^{s+1-i} )$.  Collecting all the coefficients together, we obtain the claim for $r$ small enough.
\end{proof}

Let $0 < \delta < 1/10$ be a small quantity (depending on $\eps, M$), let $R$ be a large quantity depending on the same parameters, and let $r_0 > 0$ be an even smaller\footnote{Readers may find it helpful to keep the hierarchy of scales 
$$ 1 \sim 1/k, \alpha \gg \eps \gg 1/M \gg \delta \gg 1/R \gg r_0 \gg r \gg 1/\Grow(M) \gg 1/N > 0$$
in mind.
}  quantity than $\delta$ (depending on $\eps, M, \delta,R$) to be chosen later.  For each $r$ with $0 < r < r_0$ take a Lipschitz function $\phi_r: G \to \R^+$ of Lipschitz norm $O_{M,r,\delta}(1)$ which is supported on $B_{r}$ and equals one on $B_{(1-\delta) r}$, and choose these functions so that $\phi_{r} \leq \phi'_r$ pointwise whenever $ 0 < r < r' < r_0$. For each such $r$, let $\Phi_r: G/\Gamma \times G/\Gamma \to \R^+$ be the induced function
$$ \Phi_r(x, x') := \sum_{g \in G: gx = x'} \phi_r(g).$$
This function $\Phi_r$ is supported near the diagonal of $G/\Gamma \times G/\Gamma$; indeed, $\Phi_r(x,x')$ is only non-zero when $x' \in B_r x$, and furthermore if $x' \in B_{(1-\delta)r} x$ then $\Phi_r(x,x') = 1$.  If $r_0$ is chosen sufficiently small depending on $M,\delta$, we conclude from Lemma \ref{normal} that we have the approximate shift-invariance
\begin{equation}\label{phoi}
\Phi_{(1-3\delta)r}(x,x') \leq \Phi_r( gx, gx' ) \leq \Phi_{(1+3\delta) r}(x, x')
\end{equation}
whenever $x,x' \in G/\Gamma$ and $g \in G$ is such that $d_G(g,\id) \leq R$ (say).

We now define our cutoff function $\mu = \mu_r$ by
\begin{equation}\label{monkey}
\mu_r(n,d) := c_r 1_{q|d} 1_{[k\eps N, (1-k\eps) N]}(n) 1_{[-\delta N,\delta N]}(d) \prod_{i=1}^{k-1} \Phi_r( g(n) \Gamma, g(n+id)\Gamma ),
\end{equation}
where $c_r>0$ is a normalisation constant to be chosen later. This function, as discussed immediately following the statement of Proposition \ref{szem-prop}, is a smooth cutoff to the set of ``almost-diagonal'' progressions in $G/\Gamma$. Specifically, $\mu_r$ is supported in \eqref{nand}, and also in the region where $g(n+id) \Gamma \in B_{r} g(n) \Gamma$, $|d| \leq \delta N$, and $q|d$ for $i=0,\ldots,k-1$.  From the Lipschitz nature of $F$ we thus have
\begin{align*} F( g(n+id) \Gamma,  (n+id) \md{q}, & (n+id)/N ) \\ & = F(g(n) \Gamma, n \md{q}, n/N ) + O_{M}(r_0)\end{align*}
for $(n,d)$ in the support of $\mu_r$, which gives \eqref{fnil-near} for $\mu_r$ if $r_0$ is sufficiently small depending on $\eps, M$.

Next, we compute the expectation of $\mu_r(n,d)$, in order to work out what the normalisation constant $c_r$ should be.  Observe that
\begin{align}\nonumber
&  \E_{n \in [N], d \in [-\eps N, \eps N]}  \mu_r(n,d) \\  \label{slash} & = \frac{\delta}{q\eps} (1+O(\eps)) c_r \times \\ \times &\E_{n \in [k\eps N, (1-k\eps) N]; d \in [-\delta N, \delta N]; q|d} \tilde \Phi_r( g(n) \Gamma, \ldots, g(n+(k-1)d) \Gamma ) ,\nonumber
\end{align}
where $\tilde \Phi_r: (G/\Gamma)^k \to \R^+$ is the function
\begin{equation}\label{far}
 \tilde \Phi_r( x_0,\ldots,x_{d-1} ) := \prod_{i=1}^{k-1} \Phi_r( x_0, x_i ).
\end{equation}
Observe that $\tilde \Phi$ has a Lipschitz norm of $O_{M,r,\delta}(1)$.  Applying Theorem \ref{count-lem}, we can express \eqref{slash} as
$$ \frac{\delta}{q\eps} (1+O(\eps)) c_r ( \int_{G^{\Psi}/\Gamma^{\Psi}} \tilde \Phi_r + o_{\Grow(M) \to \infty; M,r,\delta}(1) + 
o_{N \to \infty; M,r,\delta}(1) ),$$
where $G^{\Psi} \subseteq G^k$ is the $k^{\operatorname{th}}$ Hall-Petresco group, that is to say the Leibman group associated to the (translation-invariant) collection $\Psi = (\psi_0,\ldots,\psi_{k-1})$ of linear forms $\Psi^{(i)} := (n,d) \mapsto n+id$ for $i=0,\ldots,k-1$.  

The group $G^{\Psi}$ is a $O_M(1)$-rational subgroup of $G^k$, which itself has complexity $O_M(1)$.  Meanwhile, the function $\tilde \Phi_r$ equals $1$ on a ball of radius $r^{O_M(1)}$ centred at the identity, and is bounded by $1$ throughout.  We conclude that the quantity
$$ v_r := \int_{G^{\Psi}/\Gamma^{\Psi}} \tilde \Phi_r $$
obeys the bounds
$$ r^{O_M(1)} \ll_M v_r \leq 1.$$
Furthermore, from the properties of the functions $\phi_r$, we have the monotonicity property
$$ v_{(1-\delta)r} \leq v_r$$
for any $0 < r < r_0$.  Applying the pigeonhole principle (using the fact that polynomial growth is always slower than exponential growth), and choosing $\delta \gg_{\eps,M} 1$ sufficiently small depending on $\eps, M$, one can thus find a radius 
$$r_0 > r \gg_{r_0,\eps,\delta,M} 1$$
such that we have the regularity property
\begin{equation}\label{vr-stable}
 (1-O(\eps)) v_r \leq v_{(1-3\delta)r} \leq v_{(1+3\delta)r} \leq (1+O(\eps)) v_r.
\end{equation}
Note that this idea of picking a ``regular'' radius originates, in additive combinatorics, in Bourgain's paper \cite{bourgain-triples}. Fix from now on a value of $r$ with this property.  If we then set
\begin{equation}\label{camera-2}
c_r := \frac{q\eps}{\delta v_r}
\end{equation}
we conclude that
\begin{equation}\label{camera}
 c_r \ll_{M,r_0,\eps} 1
\end{equation}
and
$$ \E_{n \in [N], d \in [-\eps N, \eps N]} \mu_r(n,d) = 1 + O(\eps) + 
o_{\Grow(M) \to \infty; M,\eps,r_0}(1) + o_{N \to \infty; M,\eps,r_0}(1).$$
This will give \eqref{mu-norm} provided that $r_0$ is chosen to depend on $M, \eps, \delta$, that $\Grow$ is sufficiently rapid depending on $\eps$, and $N$ is sufficiently large depending on $M, \eps$.

Our remaining task, and the most difficult one, is to study the expression in \eqref{note}. That is to say, we fix $0 \leq i \leq k-1$ and consider 
\begin{equation}\label{note-2}
\E_{n \in [N]} |\E_{d \in [-\eps N,\eps N]} \mu_r(n-id,d)|^2.
\end{equation}
Using \eqref{monkey}, we can write this expression as
\begin{align*}
(1 +O(\eps)) (\frac{\eps}{q\delta} c_r)^2 & \E_{n \in [k\eps N, (1-k\eps)N]}  \E_{d,d' \in [-\delta N, \delta N]; q|d,d'}\\
&\tilde \Phi^{\otimes 2}_r( g(n-id)\Gamma,\ldots,g(n+(k-1-i)d)\Gamma, \\ & \qquad \qquad  g(n-id')\Gamma,\ldots,g(n+(k-1-i)d')\Gamma )
\end{align*}
where $\tilde \Phi^{\otimes 2}_r: (G/\Gamma)^k \times (G/\Gamma)^k \to \R^+$ is the tensor square
$$ \tilde \Phi^{\otimes 2}_r( x, x' ) := \tilde \Phi_r(x) \tilde \Phi_r(x').$$
Applying Theorem \ref{count-lem}, we can thus express \eqref{note-2} as
\begin{equation}\label{note-3}
(1+O(\eps)) (\frac{\eps}{q\delta} c_r)^2\big ( \int_{G^{\Psi^{(i)}}/\Gamma^{\Psi^{(i)}}} \tilde \Phi_r^{\otimes 2} + o_{\Grow(M) \to \infty; \eps,M,r_0}(1) + o_{N \to \infty; \eps,M,r_0}(1) \big)
\end{equation}
where $G^{\Psi^{(i)}} \subset G^{2k}$ is the Leibman group associated to the (translation-invariant) collection
$$ \Psi^{(i)} := ( \psi_{0,i},\ldots,\psi_{k-1,i}, \psi'_{0,i}, \ldots, \psi'_{k-1,i} )$$
of linear forms 
$$ \psi_{j,i}: (n,d,d') \mapsto n+(j-i)d$$
and
$$ \psi'_{j,i}: (n,d,d') \mapsto n+(j-i)d'$$ 
for $j=0,\ldots,k-1$.

We will be establishing the following claim.

\begin{claim}[Approximate factorisation]\label{approx-factor}  We have
\begin{equation}\label{facta}
 \int_{G^{\Psi^{(i)}}/\Gamma^{\Psi^{(i)}}} \tilde \Phi_r^{\otimes 2} = (1+O(\eps)) v_r^2.
 \end{equation}
\end{claim}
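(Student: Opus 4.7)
The strategy is to realise the left side of \eqref{facta} as an $L^2$-integral over $G/\Gamma$ of a function $f_r$, and then use approximate normality of the balls $B_r$ (Lemma \ref{normal}) to show that $f_r$ is nearly constant, so the $L^2$-integral collapses to $\bigl(\int f_r\bigr)^2 = v_r^2$.

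The key structural observation is that $\psi_{i,i} = \psi'_{i,i}$ as linear forms, which forces $x_i = x'_i$ for every $(x|x') \in G^{\Psi^{(i)}}$. This yields a surjective Lie group homomorphism $\pi_i : G^{\Psi^{(i)}} \to G$, split by the diagonal embedding $g_0 \mapsto (g_0,\ldots,g_0 | g_0,\ldots,g_0)$. Using the easy decomposition $\Psi^{(i)[j]} = \{(p(l-i):l \mid q(l-i):l) : \deg p, \deg q \leq j,\ p(0)=q(0)\}$, I would pick a row-echelon basis of $\Psi^{(i)[s]}$ with first element $\vec w_0 = (1,\ldots,1|1,\ldots,1)$ and remaining elements $\vec u_j = ((l-i)^j : l \mid 0)$ and $\vec u'_j = (0 \mid (l-i)^j : l)$ for $j=1,\ldots,s$. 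The kernel of $\pi_i$ would then be identified as a commuting direct product $K = K_d \times K_{d'}$, with $K_d, K_{d'}$ acting on disjoint blocks of $G^{2k}$ and each isomorphic (via a shift of polynomial sequences) to $K' := \ker(\pi_i^\Psi : G^\Psi \to G)$, where $\pi_i^\Psi$ is the analogous $i$-th coordinate projection on the single Hall--Petresco group. Integrality of the basis vectors would give the matching lattice decomposition $\Gamma^{\Psi^{(i)}} = \Gamma \cdot (\Gamma_{K_d} \times \Gamma_{K_{d'}})$.

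Disintegration of Haar measure along $\pi_i$, combined with the pointwise factorisation
\[ \tilde\Phi_r^{\otimes 2}\bigl((g_0,\ldots,g_0|g_0,\ldots,g_0)(\kappa_d,\kappa_{d'})\bigr) = \tilde\Phi_r\bigl((g_0,\ldots,g_0)\kappa_d\bigr)\cdot\tilde\Phi_r\bigl((g_0,\ldots,g_0)\kappa_{d'}\bigr), \]
would then collapse the left side of \eqref{facta} to $\int_{G/\Gamma} f_r(g_0)^2\, dg_0$ where
\[ f_r(g_0) := \int_{K'/\Gamma_{K'}} \tilde\Phi_r\bigl((g_0,\ldots,g_0)\kappa\bigr)\, d\kappa, \]
and a parallel disintegration of $v_r = \int_{G^\Psi/\Gamma^\Psi} \tilde\Phi_r$ along $\pi_i^\Psi$ would give $v_r = \int_{G/\Gamma} f_r(g_0)\, dg_0$. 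To finish, I plan to show $f_r(g_0) = (1+O(\eps))v_r$ uniformly for $g_0$ in a bounded fundamental domain of $G/\Gamma$. The approximate-normality bound \eqref{phoi}, applied factor-by-factor to the product defining $\tilde\Phi_r$, gives the pointwise sandwich $\tilde\Phi_{(1-3\delta)r}(\kappa) \leq \tilde\Phi_r((g_0,\ldots,g_0)\kappa) \leq \tilde\Phi_{(1+3\delta)r}(\kappa)$; integrating over $\kappa \in K'/\Gamma_{K'}$ (and then also over $g_0 \in G/\Gamma$) will produce $\tilde f_{(1-3\delta)r} \leq f_r(g_0), v_r \leq \tilde f_{(1+3\delta)r}$, where $\tilde f_{r'} := \int_{K'/\Gamma_{K'}} \tilde\Phi_{r'}$. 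Substituting $r' = r/(1\mp 3\delta) = (1\pm 3\delta + O(\delta^2))r$ into this latter sandwich and invoking the regularity \eqref{vr-stable} of $r$ will give $\tilde f_{(1\pm 3\delta)r} = (1+O(\eps))v_r$, hence the desired uniform estimate $f_r(g_0)=(1+O(\eps))v_r$; squaring and integrating then produces \eqref{facta}.

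The principal technical obstacle will be verifying the structural identification $G^{\Psi^{(i)}} = G \ltimes (K_d \times K_{d'})$ together with the matching decomposition of the lattice $\Gamma^{\Psi^{(i)}}$. This will require setting up the row-echelon basis (as in the paragraph containing \eqref{go}) with integer entries and checking that generators $g^{\vec v}$ for $g \in \Gamma$ and $\vec v$ in the basis land in $\Gamma^{2k}$; once this bookkeeping is complete, commutativity of $K_d$ and $K_{d'}$ is immediate from their disjoint support, and everything else is a routine combination of Fubini and Lemma \ref{normal}.
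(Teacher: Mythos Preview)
Your proposal is correct and reaches the same conclusion, but the decoupling mechanism differs from the paper's. Both arguments begin by identifying $G^{\Psi^{(i)}}$ with the fibred product $\{(x,x') \in G^{\Psi} \times G^{\Psi} : x_i = x'_i\}$ (the paper does this by a dimension count rather than by writing down an explicit row-echelon basis). From there, the paper relates the constrained integral $J_r$ (over pairs with $x_0 = x'_0$) to the shifted constraint $x_0 = g x'_0$ via \eqref{phoi}, averages over $g$ in a large ball $B_R$, and uses a lattice-point count $|\Gamma \cap x_0^{-1} B_R x'_0| = \vol(B_R)(1+o_{R\to\infty;M}(1))$ to replace the average of constrained integrals by the unconstrained (product) integral $v_{(1\pm 3\delta)r}^2$. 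You instead disintegrate directly along the projection $\pi_i$ to the shared coordinate, split the kernel as $K_d \times K_{d'}$, and use \eqref{phoi} to show that the fibre integral $f_r(g_0)$ is essentially independent of $g_0$, so that $\int f_r^2 \approx (\int f_r)^2 = v_r^2$. Your route is arguably more transparent and avoids the auxiliary limit $R \to \infty$, at the cost of having to verify the kernel-and-lattice splitting carefully; the paper's route sidesteps that bookkeeping but needs the volume-packing estimate. One small technical point: your inversion step (deducing $\tilde f_{(1\pm 3\delta)r} = (1+O(\eps))v_r$ from the sandwich $\tilde f_{(1-3\delta)r''}\le v_{r''}\le \tilde f_{(1+3\delta)r''}$) requires regularity of $v$ at scale roughly $(1\pm 6\delta)r$ rather than $(1\pm 3\delta)r$; this is harmless, since the pigeonhole argument producing \eqref{vr-stable} can equally well be run at any fixed multiple of $\delta$.
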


\emph{Proof of Proposition \ref{szem-prop} assuming Claim \ref{approx-factor}.} Substitute back into \eqref{note-3} and use \eqref{camera-2}, \eqref{camera} to conclude that
$$ \eqref{note-2} = 1 + O(\eps) + o_{\Grow(M) \to \infty; \eps,M,r_0}(1) + o_{N \to \infty; \eps,M,r_0}(1). $$
This gives the result upon choosing $r_0$ sufficiently small depending on $\eps, M, \delta$, $\Grow$ sufficiently rapid depending on $\eps$, and $N$ sufficiently large depending on $\eps, M$.

It remains to establish Claim \ref{approx-factor}. For notational simplicity we establish only the claim $i = 0$ (the others being very similar). The intuition behind this claim (and behind the key assertion that the number of almost-diagonal progressions whose $i^{\operatorname{th}}$ term is $n$ does not depend on $n$) is that the linear forms $(\psi_{0,0},\ldots,\psi_{k-1,0})$ and $(\psi'_{0,0},\ldots,\psi'_{k-1,0})$ are almost independent of each other, except for the fact that they are coupled via the obvious identity $\psi_{0,0} = \psi'_{0,0}$. 

One way to encode this formally is to note that the Leibman group $G^{\Psi^{(0)}}$ is given by
\[ H := \{ (x,x') \in G^{\Psi} \times G^{\Psi} : x_0 = x'_0\},\] a product of two copies of the Hall-Petresco group $G^{\Psi} = \HP^k(G)$ fibred over the zeroth coordinate. To prove this, one may note that the containment $G^{\Psi^{(0)}} \subseteq H$ is obvious. On the other hand, one may compute directly using the dimension formula \eqref{go} that
\[ \dim (G^{\Psi}) = \dim(G) + \sum_{i=1}^{k-2} \dim(G^{(i)}) \]
and
\[ \dim (G^{\Psi^{(0)}}) = \dim(G) + 2 \sum_{i=1}^{k-2} \dim(G^{(i)}) \]
and thus
\[ \dim(G^{\Psi^{(0)}}) = 2 \dim (G^{\Psi}) - \dim (G) = \dim (H),\] 
and so since both sides are connected, simply-connected nilpotent Lie groups (and so both are homeomorphic to their Lie algebras) we have $G^{\Psi^{(0)}} = H$.

Write $J_r$ for the integral appearing in \eqref{facta}, that is to say
\[ J_r := \int_{(x, x') \in G^{\Psi}/\Gamma^{\Psi} \times G^{\Psi}/\Gamma^{\Psi} : x_0 = x'_0} \tilde \Phi_r^{\otimes 2}(x,x').\]
Let $R$ be some quantity, and suppose that $\dist_G(g, \id) \leq R$. Then by the almost-invariance property \eqref{phoi} we have
\[ \int_{(x,x') \in G^{\Psi}/\Gamma^{\Psi} \times G^{\Psi}/\Gamma^{\Psi} : x_0 = gx'_0} \tilde \Phi_{r(1 + 3\delta)}^{\otimes 2}(x,x') \geq J_{r}.\]
Integrate this over the ball $B_R := \{g \in G: \dist_G(g,\id) \leq R\}$. Then we obtain
\[ \int_{(x,x') \in (G^{\Psi}/\Gamma^{\Psi} )^2} \lambda(x,x') \tilde \Phi_{r(1 + 3\delta)}^{\otimes 2}(x,x') \geq \vol(B_R) J_{r},\]
where $\lambda(x,x')$ is the number of $g \in B_R$ for which $x_0 = g x'_0 \md{\Gamma}$, or equivalently
\[ \lambda(x,x') := | \Gamma \cap x_0^{-1} B_R x'_0|.\] Choose representatives $x_0, x'_0$ in some fundamental domain with $x_0, x'_0 = O_M(1)$. 
By a volume-packing argument and simple geometry we then have
\[ \lambda(x,x') = \vol(B_R)(1 + o_{R \rightarrow \infty;M}(1)).\]
Comparing with the above we have
\[ v_{r(1 - 3\delta)}^2 = \int_{(x,x') \in (G^{\Psi}/\Gamma^{\Psi} )^2}  \tilde \Phi_{r(1 + 3\delta)}^{\otimes 2} \geq J_{r} (1 + o_{R \rightarrow \infty;M}(1)),\] and so by \eqref{vr-stable} we have
\[ J_r \leq (1 + O(\eps) + o_{R \rightarrow \infty;M}(1)) v_r^2.\]

This gives the upper bound for Claim \ref{approx-factor}.  The lower bound is proven similarly.  This concludes the proof of Proposition \ref{szem-prop} and thus Theorem \ref{szthm}.

\section{On a theorem of Gowers and Wolf}\label{gowers-wolf-sec}

Our aim in this section is to prove Theorem \ref{gwolf}, whose statement we recall now. 

\begin{theorem}[Theorem \ref{gwolf}]  Let $\Psi = (\psi_1,\ldots,\psi_t)$ be a system of linear forms $\psi_1,\ldots,\psi_t: \Z^D \to \Z$ satisfying the flag condition, and let $s \geq 1$ be an integer such that the polynomials $\psi_1^{s+1},\ldots,\psi_t^{s+1}$ are linearly independent.  Then for any function $f: [N] \to \C$ bounded in magnitude by $1$ \textup{(}and defined to be zero outside of $[N]$\textup{)} obeying the bound $\|f\|_{U^{s+1}[N]} \leq \delta$ for some $\delta > 0$, one has
$$ \E_{\mathbf{n}  \in [N]^D} \prod_{i=1}^t f( \psi_i(\mathbf{n}) ) = o_{\delta \to 0; s, D,t,\Psi}(1).$$
\end{theorem}

Henceforth we allow all implied constants to depend on $d,t,s,\Psi$ without indicating this explicitly. Let $s' = s'(\Psi)$ be the Cauchy-Schwarz complexity of the linear forms $\Psi$, as defined in Theorem \ref{cs-lemma}. We may of course assume that $s'>s$, as Theorem \ref{gwolf} is immediate otherwise. We may also assume that $N$ is large depending on $\delta$, since otherwise the claim is trivial from a compactness argument.

Let $\eps > 0$ be a small number depending on $\delta$ to be chosen later, and let $\Grow$ be a growth function depending on $\eps$ to be chosen later.  Applying  Theorem \ref{strong-reg} at degree $s'$ (after first decomposing $f$ as a linear combination of $O(1)$ functions taking values in $[0,1]$), we can find a positive quantity $M = O_{\eps,\Grow}(1)$ and a decomposition
\begin{equation}\label{fdecomp}
f = f_\nil + f_\sml + f_\unf
\end{equation}
where:

$f_\nil$ is a $(\Grow(M),N)$-irrational virtual nilsequence of degree $\leq s'$, complexity $\leq M$, and scale $N$;

$f_\sml$ has $L^2[N]$ norm at most $\eps$;

$f_\unf$ has $U^{s'+1}[N]$ at most $1/\Grow(M)$;

All functions $f_\nil, f_\sml, f_\unf$ are bounded in magnitude by $O(1)$.

We apply this decomposition to split the expression
\begin{equation}\label{gw-form}
 \E_{\mathbf{n} \in [N]^D} \prod_{i=1}^t f( \psi_i(\mathbf{n}) ) 
\end{equation}
as the sum of $3^t$ terms, in which each copy of $f$ has been replaced with either $f_\nil$, $f_\sml$, or $f_\unf$.

Any term involving at least one factor of $f_\sml$ can be easily seen to be of size $O(\eps)$ by crudely estimating all other factors by $1$.  By \eqref{end}, any term involving at least one factor of $f_\unf$ is of size $O(1/\Grow(M))$, which is also of size $O(\eps)$ if $\Grow$ is chosen to be sufficiently rapidly growing depending on $\eps$.  We can therefore express \eqref{gw-form} as
$$
 \E_{\mathbf{n}\in [N]^D} \prod_{i=1}^t f_\nil(\psi_i(\mathbf{n}) ) + O(\eps).
$$
By hypothesis, we can write
$$ f_\nil(n) = F( g(n) \Gamma, n \md{q}, n/N )$$
for some $q$ with $1 \leq q \leq M$, some degree $\leq s$, $(\Grow(M),N)$-irrational, orbit $n \mapsto g(n) \Gamma$ of complexity $\leq M$ and some Lipschitz function $F: G/\Gamma \times \Z/q\Z \times \R$ of norm at most $M$. The mod $q$ and Archimedean behaviour in $f_\nil$ are nothing more than technical annoyances, and we set about eliminating them now. We encourage the reader to work through the heart of the argument, starting at \eqref{yoga} below, in the model case $f_{\nil} = F(g(n)\Gamma)$.  Let $\eps'$ be a small quantity depending on $\eps, M$ to be chosen later\footnote{Readers may find it helpful to keep the hierarchy of scales 
$$ 1 \gg \eps \gg 1/M, 1/q \gg \eps' \gg 1/\Grow(M) \gg \delta \gg 1/N > 0$$
in mind.}.  We partition $[N]$ into progressions $P$ of spacing $q$ and length $\eps' N$, plus a remainder set of size at most $O_M(1)$.  We can then rewrite the above expression as
$$
 \E_{P_1,\ldots,P_D} \E_{\mathbf{n} \in P_1 \times \dots \times P_D}  \prod_{i=1}^t f_\nil( \psi_i(\mathbf{n}) ) + O(\eps).
$$
We abbreviate $P_1 \times \ldots \times P_D$ as ${\mathbf{P}}$.
For a given ${\mathbf{P}}$, observe that as $\mathbf{n}$ ranges in ${\mathbf{P}}$, the residue class of $\psi_i(\mathbf{n})$ modulo $q$ is equal to a fixed class $a_{{\mathbf{P}},i}$, and the value of $\psi_i({\mathbf{P}})/N$ differs by at most $O_M(\eps')$ from a fixed number $x_{{\mathbf{P}},i}$.  We may assume that $x_{{\mathbf{P}},i} \in [0,1]$ for each $i$, otherwise the inner expectation is zero (except for a few ``boundary'' values of ${\mathbf{P}}$ which give a net contribution of $O_M(\eps')$).

If $\eps'$ is small enough depending on $\eps,M$, the $O_M(\eps')$ error in the above discussion can be absorbed in the $O(\eps)$ error, and so we have
$$\E_{\mathbf{n} \in [N]^D} \prod_{i=1}^t f( \psi_i(\mathbf{n}) ) 
 = \E_{{\mathbf{P}}} \E_{\mathbf{n} \in {\mathbf{P}}} \prod_{i=1}^t F( g(\psi_i(\mathbf{n})) \Gamma, a_{{\mathbf{P}},i}, x_{{\mathbf{P}},i} ) + O(\eps).$$
We now apply Theorem \ref{count-lem} , which tells us the the right-hand side here is
\begin{equation}\label{yoga}
\E_{{\mathbf{P}}} \int_{G^{\Psi}/\Gamma^{\Psi} }\tilde F_{{\mathbf{P}}} + O(\eps) + o_{\Grow(M) \to\infty;M,\eps,\eps'}(1)
+ o_{N \to\infty;M,\eps,\eps'}(1),
\end{equation}
where as usual $G^{\Psi} \leq G^t$ is the Leibman group associated to the system of forms $\Psi = \{\psi_1,\dots,\psi_t\}$, and here $\tilde F_{{\mathbf{P}}}: G^{\Psi}/\Gamma^{\Psi} \to \C$ is the function
$$ \tilde F_{{\mathbf{P}}}((g_1,\ldots,g_t)\Gamma^{\Psi}) := \prod_{i=1}^t F(g_i\Gamma, a_{{\mathbf{P}},i}, x_{{\mathbf{P}},i} ).$$
The heart of the matter is to obtain an upper bound on the quantity $\E_{{\mathbf{P}}} \int_{G^{\Psi}/\Gamma^{\Psi} }\tilde F_{{\mathbf{P}}}$ appearing in \eqref{yoga}. To do this, of course, we need to make use the assumption on the forms $\psi_1,\dots,\psi_t$, as well as the fact that $\Vert f \Vert_{U^{s+1}} \leq \delta$.

The aforementioned assumption, namely that $\psi_1^{s+1},\ldots,\psi_t^{s+1}$ are linearly independent, implies that $\Psi^{[s+1]}$ is the whole of $\R^t$ which, in view of the definition of the Leibman group $G^{\Psi}$, implies that $G_{(s+1)}^t \leq G^{\Psi}$.  By Fubini's theorem, we thus have
$$ \int_{G^{\Psi}/\Gamma^{\Psi}} \tilde F_{{\mathbf{P}}} = \int_{G^{\Psi}/\Gamma^{\Psi}} \tilde F_{{\mathbf{P}},\leq s}$$
where
\begin{equation}\label{ftilp}
 \tilde F_{{\mathbf{P}},\leq s}((g_1,\ldots,g_t)\Gamma^{\Psi}) := \prod_{i=1}^t F_{\leq s}(g_i\Gamma, a_{{\mathbf{P}},i}, x_{{\mathbf{P}},i} )
\end{equation}
and $F_{\leq s}$ is defined by averaging over cosets of $G_{(s+1)}$, specifically
$$ F_{\leq s}(g\Gamma,a,x) := \int_{G_{(s+1)}/\Gamma_{(s+1)}} F(g g_{s+1} \Gamma,a,x)\ dg_{s+1}.$$
Since $F$ was Lipschitz with norm $O_M(1)$, we see that $F_{\leq s}$ is Lipschitz with norm $O_M(1)$ also.  Also, since $F$ is bounded in magnitude by $O(1)$, so is $F_{\leq s}$.

As the forms $\psi_1^{s+1},\ldots,\psi_t^{s+1}$ are independent, we see in particular that $\psi_1$ is non-zero.  This implies that the projection of $G^{\Psi}$ to the first coordinate $G$ is surjective.  Meanwhile, from \eqref{ftilp} and the boundedness of $F_{\leq s}$ we have the crude upper bound
$$
|\tilde F_{{\mathbf{P}},\leq s}((g_1,\ldots,g_t)\Gamma)| \ll |F_{\leq s}(g_1\Gamma, a_{{\mathbf{P}},1}, x_{{\mathbf{P}},1} )|.$$
From Fubini's theorem, we obtain the bound
\begin{equation}\label{tfap}
 |\int_{G^{\Psi}/\Gamma^{\Psi}} \tilde F_{{\mathbf{P}}}| \ll \int_{G/\Gamma} |F_{\leq s}(\cdot, a_{{\mathbf{P}},1}, x_{{\mathbf{P}},1} )|.
\end{equation}

To proceed further, we need a crucial smallness estimate on $F_{\leq s}$:

\begin{proposition}[$F_{\leq s}$ small in $L^2$]  For any $a \in \Z/q\Z$ and $x \in [0,1]$, one has
\begin{align*} \int_{G/\Gamma} |F_{\leq s}(\cdot,a,x)|^2 &\ll O(\eps) + O_M(\eps') +\\ & o_{\delta \to \infty; M,\eps,\eps'}(1) + o_{\Grow(M) \to \infty; M,\eps,\eps'}(1) + o_{N \to\infty;M,\eps,\eps'}(1).\end{align*}
\end{proposition}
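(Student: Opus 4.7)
The strategy is to test the decomposition $f = f_\nil + f_\sml + f_\unf$ against the degree-$\leq s$ truncation
\[ f_{\nil, \leq s}(n) := F_{\leq s}(g(n)\Gamma, n \mod q, n/N), \]
which is itself a virtual nilsequence of degree $\leq s$ and complexity $O_M(1)$, because $F_{\leq s}$ is $G_{(s+1)}$-invariant and thus descends to the quotient nilmanifold $G/(G_{(s+1)}\Gamma)$, whose induced filtration has degree $\leq s$. Such a function is detectable by $\|\cdot\|_{U^{s+1}}$, and the hypothesis $\|f\|_{U^{s+1}} \leq \delta$ together with the monotonicity $\|f_\unf\|_{U^{s+1}} \leq \|f_\unf\|_{U^{s'+1}} \leq 1/\Grow(M)$ (Gowers norms are monotone in the index for bounded functions) can therefore be brought to bear.

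Fix $a, x$ and let $P = P(a, x) := \{n \in [N] : n \mod q = a,\ n/N \in [x, x+\eps']\}$, a sub-progression of density $\sim \eps'/q$. I would first apply the counting lemma (Theorem \ref{count-lem}) on the index-$q$ sublattice $a + q\Z$ and the convex interval $[xN, (x+\eps')N]$, combined with the Lipschitz continuity of $F$ and $F_{\leq s}$ in their $(a, x)$ arguments and the orthogonal projection identity $\int F \overline{F_{\leq s}} = \int |F_{\leq s}|^2$ on $G/\Gamma$, to deduce the two representations
\begin{align*}
J(a, x) &= \E_{n \in P} f_\nil(n) \overline{f_{\nil, \leq s}(n)} + O_M(\eps') + o(1), \\
\E_{n \in P} |f_{\nil, \leq s}(n)|^2 &= J(a, x) + O_M(\eps') + o(1),
\end{align*}
where $J(a, x) := \int_{G/\Gamma} |F_{\leq s}(\cdot, a, x)|^2$ and the $o(1)$ denotes $o_{\Grow(M) \to \infty; M, \eps'}(1) + o_{N \to \infty; M, \eps'}(1)$. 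Substituting $f_\nil = f - f_\sml - f_\unf$ into the first identity reduces matters to bounding three correlation averages over $P$. For the correlations with $f$ and with $f_\unf$, I would Fourier-expand $1_{n \mod q = a}$ in characters modulo $q$, smooth $1_{[xN, (x+\eps')N]}$ by a Lipschitz bump in $n/N$ of width $\eta = \eps'^2$, and absorb the factor $\overline{f_{\nil, \leq s}(n)}$; up to an $L^1$-error $O(\eta/\eps')$ the resulting test function is a bounded linear combination of degree-$\leq s$ nilsequences of complexity $O_{M, \eps'}(1)$ on a product nilmanifold. The easy direction of the inverse conjecture $\GI(s)$---a generalised von Neumann inequality of the form used in Lemma \ref{gvn-twist} (see also \cite[Appendix G]{green-tao-ziegler-u4inverse})---then controls these two correlations by $O_{M, \eps'}(\delta) + O_M(\eps')$ and $O_{M, \eps'}(1/\Grow(M)) + O_M(\eps')$, respectively, both of which are negligible in the required limits.

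The correlation with $f_\sml$ is the principal obstacle: because $f_\sml$ is only controlled in $L^2[N]$, it may concentrate on sets of density comparable to $|P|/N = \eps'/q$, so that a direct Cauchy--Schwarz on $P$ loses a factor of $\sqrt{N/|P|} = \sqrt{q/\eps'}$. The remedy is to bootstrap using the second identity above: Cauchy--Schwarz on $P$ gives
\[ \bigl|\E_{n \in P} f_\sml \overline{f_{\nil, \leq s}}\bigr|^2 \leq \bigl(\E_{n \in P} |f_\sml|^2\bigr)\bigl(\E_{n \in P} |f_{\nil, \leq s}|^2\bigr) \leq \frac{M\eps^2}{\eps'}\bigl(J(a, x) + O_M(\eps') + o(1)\bigr), \]
the crucial gain being that the second factor is $J$ itself rather than the trivial $O_M(1)$. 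Assembling all three estimates in the first identity, $J$ satisfies a quadratic inequality of the form $J \leq A + B\sqrt{J+C}$ with $A, C = O_M(\eps') + o(1)$ and $B = \eps\sqrt{M/\eps'}$. Solving by AM--GM yields
\[ J \ll O_M(\eps^2/\eps') + O_M(\eps') + o_{\delta \to 0; M, \eps'}(1) + o_{\Grow(M) \to \infty; M, \eps'}(1) + o_{N \to \infty; M, \eps'}(1), \]
which gives the claimed bound once one recognises the $O_M(\eps^2/\eps')$ term as being of $O(\eps) + O_M(\eps')$ type under the intended parameter hierarchy (in which $\eps'$ is chosen relative to $\eps$ so that the bootstrap loss is absorbed).
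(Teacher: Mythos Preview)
Your overall architecture matches the paper's exactly: use the orthogonality $\int |F_{\leq s}|^2 = \int F\,\overline{F_{\leq s}}$, invoke the counting lemma (via the irrationality of $g$ and Lemma~\ref{scaling-lemma}) to replace the integral by an average over a short arithmetic progression $P$ of length $\sim \eps' N$ and common difference $q$ on which $n\bmod q$ and $n/N$ are essentially frozen, then substitute $f_\nil = f - f_\sml - f_\unf$ and bound the three pieces separately. For the $f$ and $f_\unf$ pieces the paper simply invokes the converse to $\GI(s)$ directly on $P$; your Fourier expansion of $1_{n\equiv a}$ together with a Lipschitz smoothing of the interval indicator is a more explicit way of doing the same thing and is fine (it is in effect how one \emph{justifies} the paper's one-line appeal to the converse inequality on a subprogression).

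Where your argument diverges from the paper is in the treatment of $f_\sml$. The paper simply asserts that this contribution is $O(\eps)$ ``by the Cauchy--Schwarz inequality''; you correctly observe that a naive Cauchy--Schwarz on $P$ loses a factor of $\sqrt{q/\eps'}$, and you propose a bootstrap in which the second $L^2$-factor is itself controlled by $J$ via the counting lemma. That is a sensible idea, and the quadratic inequality you derive is correct. The problem is your last sentence: the bound $J \ll O_M(\eps^2/\eps') + O_M(\eps') + o(1)$ does \emph{not} reduce to the form $O(\eps) + O_M(\eps')$ claimed in the proposition. The term $M\eps^2/\eps'$ blows up as $\eps'\to 0$ for fixed $M,\eps$, so it is neither $O(\eps)$ (which is uniform in $M,\eps'$) nor $O_M(\eps')$ (which must vanish as $\eps'\to 0$). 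There is no single choice of $\eps'$ depending on $(M,\eps)$ that makes both $M\eps^2/\eps'$ and the genuine $O_M(\eps')$ Lipschitz errors at most a fixed multiple of $\eps$, because $M$ is itself a (potentially tower-type) function of $\eps$ via the regularity lemma. So the bootstrap as written does not close, and your parenthetical ``$\eps'$ is chosen relative to $\eps$ so that the bootstrap loss is absorbed'' is exactly the step that fails.

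In short: the strategy is the paper's strategy, your handling of $f$ and $f_\unf$ is a correct (if slightly more laborious) version of the paper's, and you have put your finger on a genuinely delicate point at the $f_\sml$ step --- but your proposed resolution of that point does not deliver the error terms in the statement.
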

\begin{proof} By reflection symmetry we may assume that $x \leq 1/2$.  We may also round $x$ so that $x=qn_0/N$ for some $n_0 \in [N/2q]$, as the error in doing so can be easily absorbed by the Lipschitz properties of $F_{\leq s}$.

By construction, $F_{\leq s}$ is invariant on $G_{(s+1)}$-cosets, while $F-F_{\leq s}$ integrates to zero on any such coset.  In particular, $F_{\leq s}(\cdot,a,x)$ and $F-F_{\leq s}(\cdot,a,x)$ are orthogonal, and thus
$$ \int_{G/\Gamma} |F_{\leq s}(\cdot,a,x)|^2 = \int_{G/\Gamma} F \overline{F_{\leq s}}(\cdot,a,x).$$
Applying Theorem \ref{count-lem} (really just the special case of this result asserting that $(g(n)\Gamma)$ is equidistributed, cf. Lemma \ref{irrat-equi}) and the Lipschitz nature of $F \overline{F_{\leq s}}$, the right-hand side can be written as
$$ \E_{n \in [\eps' N]} F \overline{F_{\leq s}}(g(qn+qn_0+a)\Gamma,a,x) + o_{\Grow(M) \to \infty; M,\eps,\eps'}(1) +  o_{N \to\infty;M,\eps,\eps'}(1).$$
Let $P$ be the progression $\{ qn+qn_0+a: n \in [\eps' N]\}$.  Then by a further use of the Lipschitz properties of $F$, we can rewrite the above expression as
\begin{align}\nonumber\E_{n \in P} F(g(n)\Gamma,  n \mod q, n/N&) \psi(n) + O_M(\eps') \\ &+ o_{\Grow(M) \to \infty; M,\eps,\eps'}(1) + o_{N \to\infty;M,\eps,\eps'}(1)\label{to-be-bounded}\end{align}
where
$$ \psi(n) := \overline{F_{\leq s}}(g(n)\Gamma,a,x).$$
Note that, as a consequence of the $G_{(s+1)}$-invariance of $F_{\leq s}$, $\psi(n)$ is a degree $\leq s$ nilsequence of complexity $O_M(1)$.
Now by \eqref{fdecomp} we have
$$F(g(n)\Gamma, n \mod q, n/N) = f(n) - f_\unf(n) - f_\sml(n).$$
The contribution of $f_\sml(n)$ to \eqref{to-be-bounded} is $O(\eps)$ by the Cauchy-Schwarz inequality.  Now consider the contribution of $f$.  Observe that because $F_{\leq s}$ is $G_{(s+1)}$-invariant, $\psi$ is a degree $\leq s$ nilsequence of complexity $O_M(1)$.   Meanwhile, $\|f\|_{U^{s+1}[N]} \leq \delta$ by hypothesis. Applying the converse to the inverse conjecture for the Gowers norms (first established in \cite{green-tao-u3inverse}, though for a simple proof see \cite[Appendix G]{green-tao-ziegler-u4inverse}), we see that
$$ \E_{n \in P} f(n) \psi(n) = o_{\delta \to 0; M,\eps,\eps'}(1).$$
Similarly, since $\|f_\unf\|_{U^{s'+1}[N]} \leq 1/\Grow(M)$ and $s' \geq s$, we have
$$ \E_{n \in P} f(n) \psi(n) = o_{\Grow(M) \to 0; M,\eps,\eps'}(1).$$
Putting all of these estimates together, we obtain the claim.
\end{proof}

Applying this bound and \eqref{tfap}, we can thus bound \eqref{yoga} in magnitude by
$$ 
O(\eps) + O_M(\eps') + o_{\delta \to \infty; M,\eps,\eps'}(1) + o_{\Grow(M) \to \infty; M,\eps,\eps'}(1) + o_{N \to\infty;M,\eps,\eps'}(1).$$
Choosing $\eps'$ sufficiently small depending on $M$ and $\eps$, and choosing $\Grow$ sufficiently rapidly growing depending on $\eps$, and then using the bound $M = O_{\eps,\Grow}(1)$ (and recalling that $N$ can be chosen large depending on $\delta$), we conclude that
$$ | \E_{\mathbf{n} \in [N]^D} \prod_{i=1}^t f( \psi_i(\mathbf{n}) ) | \ll \eps$$
whenever $\delta$ is sufficiently small depending on $\eps$.  Theorem \ref{gwolf} follows.\vspace{11pt}

\emph{Remark.}  It seems certain that one can extend this result to the case when one has $t$ distinct functions $f_1,\ldots,f_t: [N] \to \C$ rather than a single function $f: [N] \to \C$.  The main change in the argument would be to use a version of the regularity lemma (Theorem \ref{strong-reg}) valid for several functions simultaneously, in which one regularises the $f_1,\ldots,f_t$ using the same data $M$, $q$, $(G/\Gamma,G_\bullet)$, $g()$ (but allows each function $f_i$ to be given a separate Lipschitz function $F_i: G/\Gamma \times \Z/q\Z \times \R \to \C$).  Such a result could be obtained by straightforward modifications to the proof of Theorem \ref{strong-reg}, but we do not pursue this matter here.

\appendix

\section{Properties of polynomial sequences}\label{appendix-a}

In this appendix we collect a variety of facts and definitions concerning polynomial sequences in nilpotent groups, all of which were required at some point in the paper proper. We take for granted the definition of filtration $G_{\bullet}$ and of the group $\poly(\Z^d, G_{\bullet})$ of polynomial sequences $g : \Z^d \rightarrow G$ adapted to $G_{\bullet}$; these notions were recalled in the introduction.\vspace{11pt}

\textsc{Taylor expansions.} Polynomial sequences may be described in terms of so-called Taylor expansions. In the lemma that follows we make use of the generalised binomial coefficients 
$\binom{\mathbf{n}}{\mathbf{i}}$ are the generalised binomial coefficients
$$ \binom{(n_1,\ldots,n_D)}{(i_1,\ldots,i_D)} := \binom{n_1}{i_1} \ldots \binom{n_D}{i_D}$$
where
$$ \binom{n}{i} := \frac{n(n-1)\ldots(n-i+1)}{i!}.$$ If $\mathbf{i} = (i_1,\dots,i_d) \in \N^D$ is a $D$-tuple of non-negative integers we define the degree $|\mathbf{i}|  := i_1+\ldots+i_D$. Choose an arbitrary ordering on $\N^D$ with the property that $|\mathbf{i}| \geq |\mathbf{j}|$ whenever $\mathbf{i} \geq \mathbf{j}$.

\begin{lemma}[Taylor expansions]\label{taylor-lem}
Suppose that $g \in \poly(\Z^D, G_{\bullet})$. Then there are unique \emph{Taylor coefficients} $g_{\mathbf{i}} \in G_{|\mathbf{i}|}$ with the property that 
\[ g(\mathbf{n}) = \prod_{\mathbf{i} \in \N^d} g_{\mathbf{i}}^{\binom{\mathbf{n}}{\mathbf{i}}}\]
for all $\mathbf{n} \in \Z^D$. 
Conversely, every Taylor expansion of this type gives rise to a polynomial sequence $g \in \poly(\Z^D, G_{\bullet})$.
\end{lemma}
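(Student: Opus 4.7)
The plan is to prove uniqueness first, then the converse direction, and finally existence by induction on the step of the filtration.

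For \textbf{uniqueness}, enumerate $\mathbf{i} \in \N^D$ in the chosen order and argue recursively: the previously determined $g_{\mathbf{j}}$ with $\mathbf{j} < \mathbf{i}$ and the value $g(\mathbf{i})$ together determine $g_{\mathbf{i}}$, since $\binom{\mathbf{i}}{\mathbf{i}} = 1$ and $\binom{\mathbf{i}}{\mathbf{j}} = 0$ whenever $\mathbf{j} \not\leq \mathbf{i}$, while for $\mathbf{j} < \mathbf{i}$ the contributions $g_{\mathbf{j}}^{\binom{\mathbf{i}}{\mathbf{j}}}$ are already known. Solving the resulting (lower-triangular) system determines each $g_{\mathbf{i}} \in G$; membership in $G_{(|\mathbf{i}|)}$ then follows from the existence direction.

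For the \textbf{converse direction}, by Theorem \ref{ll-thm} it suffices to show that each individual factor $\mathbf{n} \mapsto g_{\mathbf{i}}^{\binom{\mathbf{n}}{\mathbf{i}}}$ lies in $\poly(\Z^D, G_\bullet)$. For a fixed $h \in G_{(k)}$ the powers $h^m$ commute with one another, so for any polynomial $P:\Z^D \to \Z$ we have
\[ \partial_{h_1} \cdots \partial_{h_j}\bigl(h^{P(\mathbf{n})}\bigr) = h^{(\Delta_{h_1} \cdots \Delta_{h_j} P)(\mathbf{n})}, \]
where $\Delta$ denotes the ordinary additive finite difference. Apply this with $h = g_{\mathbf{i}}$, $k = |\mathbf{i}|$, and $P(\mathbf{n}) = \binom{\mathbf{n}}{\mathbf{i}}$ (a polynomial of degree $|\mathbf{i}|$): for $j \leq |\mathbf{i}|$ the right-hand side lies in $G_{(|\mathbf{i}|)} \subseteq G_{(j)}$, and for $j > |\mathbf{i}|$ the finite difference $\Delta^j P$ vanishes, so the result is $\id \in G_{(j)}$. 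Thus $\mathbf{n} \mapsto g_{\mathbf{i}}^{\binom{\mathbf{n}}{\mathbf{i}}}$ is adapted to $G_\bullet$, and the full product lies in $\poly(\Z^D, G_\bullet)$.

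For \textbf{existence}, induct on the minimal $s \geq 1$ with $G_{(s+1)} = \{\id\}$. The base case $s = 1$ means $G$ is abelian, so $g$ is simply a polynomial map $\Z^D \to G$ of degree $\leq 1$ in each variable, and the standard Mahler/Newton forward-difference formula provides the required binomial expansion. For the inductive step, let $\pi : G \to G/G_{(s)}$ be the quotient map. The filtration $\bar G_\bullet := (G_{(i)}/G_{(s)})_{i}$ has step $< s$, and $\pi \circ g \in \poly(\Z^D, \bar G_\bullet)$, so by induction there exist $\bar g_{\mathbf{i}} \in \bar G_{(|\mathbf{i}|)}$ for $|\mathbf{i}| < s$ with $\pi(g(\mathbf{n})) = \prod_{|\mathbf{i}| < s} \bar g_{\mathbf{i}}^{\binom{\mathbf{n}}{\mathbf{i}}}$. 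Choose lifts $g_{\mathbf{i}} \in G_{(|\mathbf{i}|)}$ and set
\[ h(\mathbf{n}) := \Bigl(\prod_{|\mathbf{i}| < s} g_{\mathbf{i}}^{\binom{\mathbf{n}}{\mathbf{i}}}\Bigr)^{-1} g(\mathbf{n}). \]
By Theorem \ref{ll-thm} and the converse direction, $h \in \poly(\Z^D, G_\bullet)$; by construction $h$ takes values in $G_{(s)}$. The key observation is that $G_{(s)}$ is \emph{central} in $G$, since $[G_{(s)}, G] \subseteq G_{(s+1)} = \{\id\}$; in particular $G_{(s)}$ is abelian, and $h$ is an abelian polynomial sequence of degree $\leq s$ (as $\partial_{h_1} \cdots \partial_{h_{s+1}} h \in G_{(s+1)} = \id$). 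The abelian Newton formula yields $h(\mathbf{n}) = \prod_{|\mathbf{i}| \leq s} h_{\mathbf{i}}^{\binom{\mathbf{n}}{\mathbf{i}}}$ with $h_{\mathbf{i}} \in G_{(s)} \subseteq G_{(|\mathbf{i}|)}$. Centrality of $G_{(s)}$ lets each $h_{\mathbf{i}}^{\binom{\mathbf{n}}{\mathbf{i}}}$ commute past all other factors, so we may replace $g_{\mathbf{i}}$ by $g_{\mathbf{i}} h_{\mathbf{i}}$ for $|\mathbf{i}| < s$ and adjoin $h_{\mathbf{i}}$ for $|\mathbf{i}| = s$ to obtain the Taylor expansion for $g$.

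The \textbf{main obstacle} is controlling the non-commutativity in the inductive step. The expansion modulo $G_{(s)}$ and the expansion of the residual factor $h$ into $G_{(s)}$ must be concatenated into a single ordered product, and this merging works only because $G_{(s)}$ lies in the center of $G$ (a direct consequence of the filtration axiom $[G_{(s)}, G_{(1)}] \subseteq G_{(s+1)} = \{\id\}$). Without this centrality, rearranging the top-degree factors past the lower-degree ones would introduce commutator terms that the induction could not absorb.
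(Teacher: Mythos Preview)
The paper does not actually prove this lemma; it simply cites \cite[Lemma 6.7]{green-tao-nilratner} and moves on. Your argument is therefore being compared to an external reference rather than to anything in the present paper, and on its own merits it is correct: the uniqueness step via lower-triangular evaluation at $\mathbf{n}=\mathbf{i}$, the converse via Theorem \ref{ll-thm} together with the abelian computation of derivatives of $g_{\mathbf{i}}^{\binom{\mathbf{n}}{\mathbf{i}}}$, and the existence step by induction on the step $s$ using centrality of $G_{(s)}$ all go through as written.

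One small slip: in your base case $s=1$ you say $g$ has ``degree $\leq 1$ in each variable'', but the condition $\partial_{h_1}\partial_{h_2} g \in G_{(2)}=\{\id\}$ forces \emph{total} degree $\leq 1$, not separate degree $\leq 1$ in each coordinate. This does not affect the argument, since the Newton expansion you invoke is still valid (indeed easier) under the stronger conclusion. You might also make explicit, in the inductive step, that the abelian Newton expansion you apply to $h$ is the general degree-$\leq s$ version (not just the degree-$\leq 1$ case from the base), so that the reader sees you are using the full multivariate Mahler formula for maps $\Z^D \to G_{(s)}$.
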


\emph{Remarks.} This is proven in \cite[Lemma 6.7]{green-tao-nilratner}. Note that, since $G$ is nilpotent, this is a finite expansion. In the case $D = 1$ (which will feature most prominently in the paper) the it takes the form
\[ g(n) = g_0 g_1^{\binom{n}{1}} \dots g_s^{\binom{n}{s}}.\] Note how, from the presentation of polynomial sequences as Taylor expansions, it is by no means clear (and somewhat remarkable) that they form a group under pointwise multiplication (Theorem \ref{ll-thm}).

Polynomial sequences that vary slowly, in a certain sense, are called \emph{smooth}. We employ the following definition, which is the same as the one given in the introduction to \cite{green-tao-nilratner}. 

\begin{definition}[Smooth sequences]\label{smooth-def}
Let  $A$ be a positive parameter and let $N \geq 1$ be an integer. Let $\beta \in \poly(\Z,G_\bullet)$. We say that $\beta$ is \emph{$(A,N)$-smooth} if we have $d_G(\beta(n),\id) \leq A$ and $d_G(\beta(n),\beta(n+1)) \leq A/N$ for all $n \in [N]$.
\end{definition}

Here $d_G$ is a metric on the group $G$ constructed using the Mal'cev basis, see \cite[Definition 2.2]{green-tao-nilratner}.  The precise definition of this metric is not terribly important for our analysis.  

In counterpoint\footnote{One could take an ``adelic'' perspective here and view smooth sequences as those that are local to the Archimedean place $\infty$, while rational sequences are those that are local to finite places $p$.} to the notion of a smooth sequence is that of a \emph{rational} sequence. 

\begin{definition}[Rational sequences]\label{rational-def}
Let $A \geq 1$ be an integer, and let $(G/\Gamma,G_\bullet)$ be a filtered nilmanifold. Then an element $g \in G$ is \emph{$A$-rational} if there is some $q$, $1 \leq q \leq A$, such that $g^q \in \Gamma$. If $\gamma \in \poly(\Z,G_\bullet)$ is a polynomial sequence then we say that it is \emph{$A$-rational} if $\gamma(n)$ is $A$-rational for every integer $n$.
\end{definition}

We have the following basic facts about smooth and rational sequences:

\begin{lemma}[Basic facts]\label{prod-smooth}  Let $(G/\Gamma,G_\bullet)$ be a filtered nilmanifold of complexity $\leq M_0$. By a ``sequence'', we mean an element of $\poly(\Z,G_{\bullet})$. Then:

\textup{(i)} The product of two $(A,N)$-smooth sequences is $O_{M_0,A}(1)$-smooth;

\textup{(ii)} The product of two $A$-rational sequences is $O_{M_0,A}(1)$-rational;
 
\textup{(iii)} Any $A$-rational sequence is periodic with period $O_{M_0,A}(1)$. 
\end{lemma}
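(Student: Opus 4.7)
Part (i) is the most routine: I will appeal to the approximate bi-invariance of the Mal'cev metric $d_G$ under multiplication by elements of bounded size (a standard fact from \cite[\S 2]{green-tao-nilratner}, provable from the Baker--Campbell--Hausdorff formula by expressing translates in Mal'cev coordinates). If $\beta_1,\beta_2\in\poly(\Z,G_\bullet)$ are $(A,N)$-smooth, I use the triangle inequality
$$d_G(\beta_1(n)\beta_2(n),\id)\le d_G(\beta_1(n)\beta_2(n),\beta_1(n))+d_G(\beta_1(n),\id)$$
and bound the first term by $O_{M_0,A}(1)\cdot d_G(\beta_2(n),\id)$ using quasi-invariance (since $\beta_1(n)$ has norm $\le A$). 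A similar interpolation through $\beta_1(n+1)\beta_2(n)$ controls the consecutive-differences estimate, with the $A/N$ step bounds multiplying out to $O_{M_0,A}(1)/N$. That $\beta_1\beta_2\in\poly(\Z,G_\bullet)$ is Theorem \ref{ll-thm}.

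For (ii), the key pointwise claim is that if $g_1,g_2\in G$ satisfy $g_1^{q_1},g_2^{q_2}\in\Gamma$ with $q_1,q_2\le A$, then $(g_1g_2)^Q\in\Gamma$ for some $Q=O_{M_0,A}(1)$. I will deduce this by passing to Mal'cev coordinates: in such coordinates $\Gamma$ is a set of integer points, and the group operation is given by a polynomial map with $M_0$-bounded coefficients; raising to a bounded power, the coordinates of $g_1g_2$ have denominators dividing some $Q=O_{M_0,A}(1)$, after which $(g_1g_2)^Q$ has integer Mal'cev coordinates and hence lies in $\Gamma$. Applied pointwise in $n$, this gives (ii); the fact that $\gamma_1\gamma_2\in\poly(\Z,G_\bullet)$ is again Theorem \ref{ll-thm}.

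For (iii), I will exploit the Taylor expansion (Lemma \ref{taylor-lem}) $\gamma(n)=\gamma_0\gamma_1^{\binom{n}{1}}\cdots\gamma_s^{\binom{n}{s}}$ with $\gamma_i\in G_{(i)}$. The first step is to show each Taylor coefficient $\gamma_i$ is itself $O_{M_0,A}(1)$-rational. This follows by induction in $i$: the values $\gamma(0),\gamma(1),\ldots,\gamma(i)$ are all $A$-rational, and by telescoping (or by inverting the lower-triangular linear system expressing $(\gamma(j))_{j\le i}$ in terms of $(\gamma_j)_{j\le i}$ modulo lower-order factors) one writes $\gamma_i$ as a product of bounded powers and inverses of rational elements, to which part (ii) applies. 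Next, I pick a single $q=O_{M_0,A}(1)$ with $\gamma_i^q\in\Gamma_{(i)}$ for all $i\le s$ and set $Q:=q\cdot s!$. The Vandermonde identity $\binom{n+Q}{i}=\sum_{k=0}^{i}\binom{Q}{k}\binom{n}{i-k}$ together with the elementary divisibility $q\mid\binom{Q}{k}$ for $1\le k\le s$ (using $Q/k = q\cdot(s!/k)\in q\Z$ and the identity $\binom{Q}{k}=(Q/k)\binom{Q-1}{k-1}$) yields $\binom{n+Q}{i}\equiv\binom{n}{i}\pmod q$ for every $n$ and every $1\le i\le s$. Finally, I argue $\gamma(n+Q)\Gamma=\gamma(n)\Gamma$ by an induction on $s$ that processes the Taylor factors from degree $s$ downwards, using the filtration property $[G_{(i)},G_{(j)}]\subseteq G_{(i+j)}$ and part (ii) to absorb the commutators that arise when one re-collects the factors of $\gamma(n+Q)\gamma(n)^{-1}$ into the form $\prod_i \gamma_i^{\binom{n+Q}{i}-\binom{n}{i}}\cdot(\text{higher degree junk})$; each factor lies in $\Gamma_{(i)}$ by the divisibility above and the choice of $q$.

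\textbf{Main obstacle.} The delicate step is rationality of the Taylor coefficients and the non-commutative periodicity argument in (iii): commutators generated by reordering Taylor factors must be controlled using the filtration, and at each stage of the induction one has to verify that the accumulated denominators remain $O_{M_0,A}(1)$. Parts (i) and (ii) are essentially bookkeeping once the correct metric and Mal'cev-coordinate lemmas are invoked.
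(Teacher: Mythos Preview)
The paper does not actually prove this lemma; it simply cites \cite{green-tao-nilratner} (Lemma~10.1, Lemma~A.11(v), and Lemma~A.12(ii)) for parts (i), (ii), (iii) respectively. Your sketches for (i) and (ii) are correct and essentially reproduce those arguments.

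For (iii) your strategy is right, but the final commutator-collection step has a genuine gap. After rewriting $\gamma(n)^{-1}\gamma(n+Q)$ (note: for periodicity of $\gamma(n)\Gamma$ one needs this, not $\gamma(n+Q)\gamma(n)^{-1}$) as $\prod_i\gamma_i^{c_i}$ times ``higher degree junk'', that junk consists of iterated commutators such as $[\gamma_j,\gamma_i]^{\pm a_jc_i}$, where $a_j=\binom{n}{j}$ is unrestricted. Your Vandermonde divisibility gives $q\mid c_i$, so $\gamma_i^{c_i}\in\Gamma$; but $[\gamma_j,\gamma_i]$ has its \emph{own} rationality period $q'$, which by part~(ii) is $O_{M_0,A}(1)$ yet in general exceeds $q$ (if $\gamma_i,\gamma_j$ have Mal'cev coordinates in $\tfrac1q\Z$ then $[\gamma_j,\gamma_i]$ typically has coordinates in $\tfrac1{q^2}\Z$). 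Merely knowing the junk is rational does not place it in $\Gamma$, so ``part~(ii) to absorb the commutators'' does not close the argument with your choice $Q=q\cdot s!$. One clean repair: set $\Gamma':=\langle\Gamma,\gamma_0,\ldots,\gamma_s\rangle$. Because the group law in Mal'cev coordinates is polynomial of degree~$\le s$ with $O_{M_0}(1)$-bounded rational structure constants, every word in these generators has coordinates in $\tfrac1{Q}\Z$ for a single $Q=O_{M_0,A}(1)$; hence $\Gamma'$ is discrete, and since it contains the cocompact lattice $\Gamma$ one has $m:=[\Gamma':\Gamma]=O_{M_0,A}(1)$. Pass to the normal core $\Gamma_0\trianglelefteq\Gamma'$ of $\Gamma$ (index $\le m!$): now $\gamma(n)\Gamma_0$ lives in the \emph{finite group} $\Gamma'/\Gamma_0$, in which every element has order dividing $m':=|\Gamma'/\Gamma_0|$, and your Vandermonde argument applies directly (with $q$ replaced by $m'$) to give period $m'\cdot s!$, with no commutator bookkeeping needed. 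Since $\Gamma_0\subseteq\Gamma$, periodicity of $\gamma(n)\Gamma_0$ yields periodicity of $\gamma(n)\Gamma$.
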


\begin{proof} For (i), see \cite[Lemma 10.1]{green-tao-nilratner}; for (ii), see \cite[Lemma A.11 (v)]{green-tao-nilratner}; and for (iii), see \cite[Lemma A.12 (ii)]{green-tao-nilratner}.  In fact these results hold in the multiparameter setting, with polynomially effective bounds, but we will not need these facts here.
\end{proof}

We turn now to an important new definition for this paper, that of an \emph{irrational} polynomial sequence. In \cite{green-tao-nilratner}, much emphasis was placed on the notion of an \emph{equidistributed} polynomial sequence $g : \Z \rightarrow G$: one for which the orbit $(g(n)\Gamma)_{n \in [N]}$ is close to equidistributed on $G/\Gamma$. The notion of an irrational sequence implies equidistribution (see Lemma \ref{irrat-equi}, which is also a special case of Theorem \ref{count-lem}), but also encodes an assertion that the filtration $G_\bullet$ is in some sense ``minimal'' for the sequence. To illustrate the difference, let us think about a simple abelian case in which $G/\Gamma$ is just the unit circle $\R/\Z$ (written additively), and $g: \Z \to \R$ is a polynomial 
\begin{equation}\label{abelian-eq} g(n) = \alpha_0 + \alpha_1 \binom{n}{1} + \ldots + \alpha_s \binom{n}{s}.
\end{equation}
This sequence is adapted to the filtration in which $G_{(i)}=\R$ for $i \leq s$ and $G_{(i)}=\{0\}$ for $i>s$.
Qualitatively speaking, $g$ is equidistributed if at least one of $\alpha_1,\dots,\alpha_s$ is irrational; in contrast, $g$ is irrational with respect to this filtration if it is $\alpha_s$ which is irrational.  Note that if $s > 1$ and $\alpha_s$ is rational, then (after removing the periodic component $\alpha_s n^s$ from $g$) $g$ is now adapted to the filtration $G'_\bullet$ in which $G'_{(i)}=\R$ for $i \leq s-1$ and $G'_{(i)}=\{0\}$ for $i > s-1$, which has a strictly smaller total dimension.  This basic example is the model for the more sophisticated result in Lemma \ref{init}.

Let us turn now to the precise definition in the more general setting of Lie group-valued polynomial sequences, in which the role of the $\alpha_i$ is played by the Taylor coefficients of $g$. We need a preliminary definition.

\begin{definition}[$i$-horizontal characters]\label{def-a4}
Let $(G/\Gamma, G_{\bullet})$ be a filtered nilmanifold of degree $\leq s$ with filtration $G_{\bullet} = (G_{(i)})_{i = 0}^{\infty}$. Then by an \emph{$i$-horizontal character} we mean a continuous homomorphism from $\xi_i : G_{(i)} \rightarrow \R$ which vanishes on $G_{(i+1)}$, and on $[G_{(j)},G_{(i-j)}]$ for any $0 \leq j \leq i$, and which maps $\Gamma_{(i)}$ to $\Z$. We say that such a character is \emph{non-trivial} if it is not constant. We can assign a notion of complexity by taking a Mal'cev basis adapted to $G_{\bullet}$, whereupon one has a natural isomorphism $G_{(i)}/G_{(i+1)} \cong \R^k$. Writing $\psi(g_i)$ for the coordinates of $g_i\md{G_{(i+1)}}$, any $i$-horizontal character has the form $\xi_i(g_i) = \vec{m}. \psi(g_i)$, for some vector $\vec{m} = (m_1,\dots,m_k)$ of integers. We may then define the \emph{complexity} of $\xi_i$ to be $|m_1| + \dots + |m_k|$.
\end{definition}

The list of subgroups on which $\xi_i$ is required to vanish looks rather restrictive and slightly unnatural at first sight.  Roughly speaking, this list is intended to isolate that behaviour which genuinely ``belongs'' to the degree $i$ portion of the filtered nilmanifold, as opposed to arising from those terms of higher or lower degree, or which disappear after quotienting out by the lattice $\Gamma$.

\begin{definition}[Irrationality]\label{irrat-def}  Let $(G/\Gamma,G_\bullet)$ be a filtered nilmanifold of degree $\leq s$ with filtration $G_\bullet = (G_{(i)})_{i=0}^\infty$. Let $g_i \in G_{(i)}$. Let $A, N > 0$.  Then we say that $g_i$ is \emph{$(A,N)$-irrational} in $G_{(i)}$ if for every non-trivial $i$-horizontal character $\xi_i: G_{(i)} \to \R$ of complexity $\leq A$ one has $\| \xi_i(g_i)\|_{\R/\Z} \geq A / N^i$.
We say that the sequence $g(n)$ is \emph{$(A,N)$-irrational} if its $i^{\operatorname{th}}$ Taylor coefficient $g_i$ is $(A,N)$-irrational in $G_{(i)}$ for each $i$, $1 \leq i \leq s$.
 \end{definition}

To understand this definition, it is helpful to consider examples. We leave it as an exercise to check that in the abelian case \eqref{abelian-eq} this amounts to stipulating that the top coefficient of $g$ is poorly approximated by rationals, thus $\Vert q\alpha_s \Vert_{\R/\Z} \geq A'/N^s$ whenever $1 \leq q \leq A'$. 

A second interesting case to examine is that in which $g(n) = g^n$ is a linear polynomial sequence adapted to the lower central series filtration $(G_i)_{i = 0}^{\infty}$. For the lower central series filtration there are no nontrivial $i$-horizontal characters when $i \geq 2$, and $1$-horizontal characters are the same thing as horizontal characters in the sense of \cite[Definition 1.5]{green-tao-nilratner}. It follows from this and \cite[Theorem 1.16]{green-tao-nilratner} that $g(n)$ is irrational if and only if $(g(n)\Gamma)_{n \in [N]}$ is equidistributed. Now polynomial sequences that are not linear do not arise naturally in ergodic-theoretic settings such as those considered in \cite{bergelson-host-kra,leibman-orb-diag}, and thus the equivalence of the notions of ``irrational'' and ``equidistributed'' in this setting explains why the former concept has not appeared in the literature before.  The need for it is a new feature of the quantitative world, as is the need for polynomial nilsequences themselves, for reasons explained on \cite[\S 1]{green-tao-nilratner}.  

The following third example is also edifying. Take $g(n)$ to be any polynomial sequence on the Heisenberg group, for example $g(n) = \left( \begin{smallmatrix} 1 & \alpha n & \gamma n^2 \\ 0 & 1 & \beta n \\ 0 & 0 & 1 \end{smallmatrix}\right)$. This sequence is a polynomial sequence adapted to the lower central series filtration $G_0 = G_1 = G$, $G_2 = [G,G]$, $G_3 = \{\id\}$, and it will be equidistributed in that setting for generic $\alpha,\beta,\gamma$. However $g$ is also a polynomial sequence with respect to some much flabbier filtrations, for example the one in which $G_{(0)} = G_{(1)} = G_{(2)} = \dots = G_{(10)} = G$, $G_{(11)} = \dots = G_{(100)} = [G,G]$ and $G_{(i)} = \{ \id\}$ for $i \geq 101$. It is easy to check that $g$ is \emph{not} irrational in this setting, and indeed irrationality is somehow detecting the fact that a given filtration $G_{\bullet}$ is minimal for $g$. This point is quite clear in the proof of Lemma \ref{init} (which itself depends on Lemma \ref{short-factorisation} below), where the failure of a sequence to be irrational is used to create a coarser filtration for a polynomial sequence related to $g$.

\begin{lemma}\label{short-factorisation}
Suppose that $(G/\Gamma, G_{\bullet})$ is a filtered nilmanifold of degree $\leq s$ with filtration $G_{\bullet} = (G_{(i)})_{i=0}^\infty$. Suppose that $g$ is not $(A,N)$-irrational. Then there is an index $i$, $1 \leq i \leq s$, such that the $i^{\operatorname{th}}$ Taylor coefficient $g_i$ factors as $\beta_i g'_i \gamma_i$, where $\beta_i, g'_i, \gamma_i \in G_{(i)}$, $g'_i$ lies in the kernel of some $i$-horizontal character $\xi_i : G_{(i)} \rightarrow \R$ of complexity at most $A$, $d_G(\beta_i, \id) = O_{A,M}(N^{-i})$ and $\gamma_i$ is $O_{A,M}(1)$-rational.
\end{lemma}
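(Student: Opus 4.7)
The strategy is to use the failure of $(A,N)$-irrationality to single out a ``near-rational direction'' in $g_i$, and then perform the factorisation by surgery along a single one-parameter subgroup in that direction.

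First, by Definition~\ref{irrat-def}, the hypothesis that $g$ is not $(A,N)$-irrational supplies an index $i \in \{1,\dots,s\}$ and a non-trivial $i$-horizontal character $\xi_i : G_{(i)} \to \R$ of complexity at most $A$ such that $\|\xi_i(g_i)\|_{\R/\Z} < A/N^i$. Writing $\xi_i(g_i) = p + \epsilon$ with $p \in \Z$ and $|\epsilon| < A/N^i$, one obtains an integer target $p$ and a genuinely small remainder $\epsilon$.

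Next I would linearise. Since $\xi_i$ annihilates $G_{(i+1)}$ by definition and $[G_{(i)},G_{(i)}] \subseteq G_{(2i)} \subseteq G_{(i+1)}$ (using $i \geq 1$), the character $\xi_i$ factors through the abelian quotient $G_{(i)}/G_{(i+1)}$. Via the Mal'cev basis adapted to $G_\bullet$, this quotient is identified with $\R^k$ for $k = \dim G_{(i)} - \dim G_{(i+1)}$, and on it $\xi_i$ is the functional $\vec{t} \mapsto \vec{m}\cdot\vec{t}$ for a nonzero integer vector $\vec{m}$ with $\|\vec{m}\|_\infty \leq A$. Pick an index $j_0$ with $m_{j_0} \neq 0$ and let $X \in \log G_{(i)}$ denote the corresponding Mal'cev basis vector, so that $\exp(X) \in \Gamma$ and $\xi_i(\exp(tX)) = m_{j_0}\, t$ for every $t \in \R$.

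The factorisation then writes itself: set
\[
\beta_i := \exp\!\bigl((\epsilon/m_{j_0})\, X\bigr), \qquad
\gamma_i := \exp\!\bigl((p/m_{j_0})\, X\bigr), \qquad
g'_i := \beta_i^{-1}\, g_i\, \gamma_i^{-1},
\]
so that $g_i = \beta_i g'_i \gamma_i$ by construction. The bound $|\epsilon/m_{j_0}| \leq A/N^i$, together with the Mal'cev description of $d_G$, gives $d_G(\beta_i,\id) = O_{A,M}(N^{-i})$. For $\gamma_i$, the one-parameter subgroup identity $\gamma_i^{m_{j_0}} = \exp(pX) = \exp(X)^p \in \Gamma$ shows that $\gamma_i$ is $|m_{j_0}|$-rational, and hence $O_{A,M}(1)$-rational. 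Finally, since $\xi_i$ is a group homomorphism to the additive group $\R$,
\[
\xi_i(g'_i) \;=\; -\xi_i(\beta_i) + \xi_i(g_i) - \xi_i(\gamma_i) \;=\; -\epsilon + (p+\epsilon) - p \;=\; 0,
\]
so $g'_i \in \ker \xi_i$, as required. The only real obstacle is the bookkeeping needed to ensure that $X$ genuinely exponentiates into $\Gamma$ and that $\xi_i$ acts linearly on $\exp(tX)$; both follow immediately from the definition of a Mal'cev basis adapted to $G_\bullet$ combined with the fact that $\xi_i$ has already been shown to factor through the abelian quotient $G_{(i)}/G_{(i+1)}$.
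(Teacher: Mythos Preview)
Your argument is correct and follows essentially the same approach as the paper: both proofs pass to the abelian quotient $G_{(i)}/G_{(i+1)}$ via Mal'cev coordinates, use the integer vector $\vec{m}$ describing $\xi_i$, and split off a small piece and a rational piece so that what remains lies in $\ker\xi_i$. The only cosmetic difference is that the paper decomposes the full coordinate vector $\psi(g_i)=\vec{t}+\vec{u}+\vec{v}$ by linear algebra, whereas you make the more concrete choice of working along the single one-parameter subgroup $t\mapsto\exp(tX)$ associated to one nonzero entry $m_{j_0}$; this amounts to taking $\vec{t}$ and $\vec{v}$ supported on a single coordinate, which is perfectly sufficient.
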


\begin{proof} The proof is (unsurprisingly) extremely similar to that of \cite[Lemma 7.9]{green-tao-nilratner}. Reversing the definition of irrational polynomial sequence, we see that there is an index $i$ together with an $i$-horizontal character $\xi_i : G_{(i)} \rightarrow \R$ such that $\Vert \xi_i (g_i) \Vert_{\R/\Z} \leq A/N^i$. It is convenient at this point to work in a Mal'cev coordinate system adapted to $G_{\bullet}$, whereby $G_{(i)}/G_{(i+1)}$ may be identified with $\R^k$ and $\Gamma_{(i)}/G_{(i+1)}$ with $\Z^k$. If $g_i \in G_{(i)}$ then, as above, we write $\psi(g) \in \R^k$ for the corresponding coordinates. Then $\xi_i$ has the form $\xi_i (g_i) = \vec{m} . \psi(g)$ for some vector $\vec{m} = (m_1,\dots,m_k)$ of integers with $|m_1| + \dots + |m_k| \leq A$. Now by assumption we have $\Vert \vec{m} . \psi(g_i) \Vert_{\R/\Z} \leq A/N^i$, and therefore $\vec{m} . \psi(g_i) = r + O(A/N^i)$ for some integer $r$. It follows from simple linear algebra that we may write $\psi(g_i) = \vec{t} + \vec{u} + \vec{v}$, where $\vec{m} . \vec{u} = 0$, the coordinates of $\vec{v}$ lie in $\frac{1}{Q}\Z$ for some $Q = O_{A}(1)$ and each coordinate of $\vec{t}$ is $O_{A}(1/N^i)$. Now choose $\beta_i \in G_{(i)}$ in such a way that $\psi(\beta_i) = \vec{t}$ and $d_G(\beta_i, \id) = O_{A,M}(1/N_i)$, choose an $O_{A,M}(1)$-rational element $\gamma_i \in G_{(i)}$ with $\psi(\gamma_i) = \vec{v}$, and finally choose $g'_i$ so that $g_i = \beta_i g'_i \gamma_i$. Then one automatically has $\psi(g'_i) = \vec{u}$, which means that $g'_i$ lies in the kernel of the $i$-homomorphism $\xi_i$.\end{proof}

Finally, we record a convenient scaling lemma.

\begin{lemma}[Scaling lemma]\label{scaling-lemma}  Let $(G/\Gamma,G_\bullet)$ be a filtered nilmanifold of complexity $\leq M$.
If $g \in \poly(\Z,G_\bullet)$ is $(A,N)$-irrational, $r \in [-N,N]$, and $1 \leq q \leq M$, then the sequence $n \mapsto g(nq+r)$ is $(\gg_{M,\eps} A, \eps N)$-irrational for any $\eps > 0$.
\end{lemma}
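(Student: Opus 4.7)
The plan is to argue by contradiction. Write $g'(n) := g(qn+r)$; by the Lazard--Leibman theorem (Theorem \ref{ll-thm}), $g' \in \poly(\Z, G_\bullet)$ and so has a Taylor expansion with coefficients $g'_i \in G_{(i)}$ (Lemma \ref{taylor-lem}). The heart of the argument is the identity
\begin{equation}\label{scaling-key}
\xi(g'_i) = q^i\, \xi(g_i),
\end{equation}
valid for every $1 \leq i \leq s$ and every $i$-horizontal character $\xi : G_{(i)} \to \R$.

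To prove \eqref{scaling-key}, reduce modulo $G_{(i+1)}$ (which is harmless because $\xi$ annihilates $G_{(i+1)}$), so that the image $\bar G_{(i)}$ of $G_{(i)}$ is central in $\bar G := G/G_{(i+1)}$. Writing $\binom{qn+r}{j} = q^j \binom{n}{j} + R_j(n)$ with $R_j$ an integer-valued polynomial of degree $< j$, one expands $\bar g(qn+r) = \prod_{j=0}^i \bar g_j^{\binom{qn+r}{j}}$ as a product of factors $\bar g_j^{q^j \binom{n}{j}}$ and $\bar g_j^{R_j(n)}$. The only factor of degree $i$ in $n$ is $\bar g_i^{q^i \binom{n}{i}}$, contributing the ``main term'' $\bar g_i^{q^i}$ to $\bar g'_i$. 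When one reassembles the product into Taylor form, the remaining corrections to $\bar g'_i$ come from iterated commutators of the $\bar g_{j_1},\dots,\bar g_{j_m}$ (raised to polynomial powers in $n$) with $j_1 + \cdots + j_m = i$ and $m \geq 2$. Any such iterated commutator lies in $[\bar G_{(j_1)}, \bar G_{(i-j_1)}]$: one peels off the outermost bracket, uses $[G_{(a)},G_{(b)}] \subseteq G_{(a+b)}$ to absorb everything but the first coordinate into $G_{(i-j_1)}$, and applies the filtration property. By Definition \ref{def-a4}, $\xi$ annihilates all such commutators, and \eqref{scaling-key} follows.

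With \eqref{scaling-key} in hand, the lemma is immediate. Suppose for contradiction that $g'$ is not $(A',\eps N)$-irrational, where $A' := c_{M,\eps} A$ for a constant $c_{M,\eps}>0$ to be chosen. Then there exist $i \in \{1,\dots,s\}$ and a nontrivial $i$-horizontal character $\xi$ of complexity $\leq A'$ with $\|\xi(g'_i)\|_{\R/\Z} \leq A'/(\eps N)^i$. The character $q^i\xi$ is again a nontrivial $i$-horizontal character (scaling an $\R$-valued homomorphism preserves the vanishing conditions, and $q^i \neq 0$), and its complexity is at most $q^i A' \leq M^s A'$ since $q \leq M$ and $s \leq M$. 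By \eqref{scaling-key},
\[ \|(q^i\xi)(g_i)\|_{\R/\Z} = \|\xi(g'_i)\|_{\R/\Z} \leq A'/(\eps^i N^i). \]
Choosing $c_{M,\eps} := \tfrac{1}{2}\min(M^{-s},\eps^s)$ forces both $q^i\xi$ to have complexity $\leq A$ and $\|(q^i\xi)(g_i)\|_{\R/\Z} < A/N^i$, contradicting the $(A,N)$-irrationality of $g$.

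The only genuinely substantive step is \eqref{scaling-key}: its content is that the commutator ambiguities encountered in passing from the naive expression $g_i^{q^i}$ to the honest Taylor coefficient $g'_i$ all live in the subgroups $[G_{(j)}, G_{(i-j)}]$ that the definition of an $i$-horizontal character is precisely designed to annihilate.
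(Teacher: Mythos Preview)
Your proof is correct and follows essentially the same route as the paper: both arguments establish that, modulo $G_{(i+1)}$ and the commutator subgroups $[G_{(j)},G_{(i-j)}]$, the $i^{\text{th}}$ Taylor coefficient of $g(qn+r)$ is $g_i^{q^i}$, and then conclude by comparing $\xi(g'_i)=q^i\xi(g_i)$ against the irrationality hypothesis on $g_i$. The only cosmetic difference is that the paper quotients out the commutator subgroups $[G_{(j)},G_{(i-j)}]$ at the outset (so that the identity $g'_i=g_i^{q^i}$ holds literally in the quotient), whereas you keep them around and instead observe that every $i$-horizontal character annihilates the correction terms; you also spell out explicitly the passage from $\xi$ to $q^i\xi$ and the choice of $c_{M,\eps}$, which the paper leaves as ``easily verified''.
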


\begin{proof}  We need to show that the $i^{\operatorname{th}}$ Taylor coefficient of $n \mapsto g(nq+r)$ is $(\gg_{M,\eps} A, \eps N)$-irrational for each $i \geq 0$.  Note that we may assume $i \leq M$ since the filtered manifold has degree $\leq M$.

Fix $i$.  We may quotient out the nilmanifold by the normal subgroups $G_{(i+1)}$ and $[G_{(j)},G_{(i-j)}]$ for $0 \leq j \leq i$, since these do not affect the irrationality of the $i^{\operatorname{th}}$ coefficient.  We may then expand $g$ as a Taylor series
$$ g(n) = \prod_{j=0}^i g_j^{\binom{n}{j}},$$
and thus
$$ g(qn+r) = \prod_{j=0}^i g_j^{\binom{qn+r}{j}}.$$
Expanding out the binomial coefficient and using many applications of the Baker-Campbell-Hausdorff formula, we obtain
$$ g(qn+r) = (\prod_{j=0}^{i-1} (g'_j)^{\binom{n}{j}}) g_i^{q^i \binom{n}{i}}$$
for some $g'_j \in G_{(j)}$; the point being that the Baker-Campbell-Hausdorff term cannot generate any terms involving polynomials in $n$ of degree $i$ or higher due to the fact that the groups $G_{(i+1)}$ and $[G_{(j)},G_{(i-j)}]$ have been quotiented out.  As a consequence, we see that the $i^{\operatorname{th}}$ Taylor coefficient of $n \mapsto g(qn+r)$ is $q^i g_i$, and the claim is easily verified.
\end{proof}

\section{A multiparameter equidistribution result}\label{appendix-b}

The purpose of this appendix is to prove Theorem \ref{mpet}, which we recall here again.

\begin{mpet-rpt}
Suppose that $(G/\Gamma, G_{\bullet})$ is a filtered nilmanifold of complexity $\leq M$ and that $g \in \poly(\Z^D, G_{\bullet})$ is a polynomial sequence for some $D \leq M$. Suppose that $\Lambda \subseteq \Z^D$ is a lattice of index $\leq M$, that $\mathbf{n_0} \in \Z^D$ has magnitude $\leq M$, and that $P \subseteq [-N,N]^D$ is a convex body. Suppose that $\delta > 0$, and that 
$$ \big| \sum_{{\mathbf n} \in (\mathbf{n_0}+\Lambda) \cap P} F(g({\mathbf{n}})\Gamma) - \frac{\operatorname{vol}(P)}{[\Z^D:\Lambda]} \int_{G/\Gamma} F \big| > \delta N^D \|F\|_{\operatorname{Lip}}$$
for some Lipschitz function $F: G/\Gamma \to \C$.
Then there is a nontrivial homomorphism $\eta : G \rightarrow \R$ which vanishes on $\Gamma$, has complexity $O_M(1)$ and such that 
\[ \Vert \eta \circ g \Vert_{C^{\infty}([N]^D)} = O_{\delta,M}(1).\]
\end{mpet-rpt}

Recall from \cite[Definition 8.2]{green-tao-nilratner} that the norm $\Vert g\Vert_{C^\infty([N]^D)}$ of a polynomial sequence $g: [N]^D \to \R$ is given by the formula
$$ \Vert g\Vert_{C^\infty([N]^D)} = \sup_{{\mathbf i}\in \N^D} N^{-|{\mathbf i}|} \|g_{\mathbf i}\|_{\R/\Z}$$
where $g_{\mathbf i}$ are the Taylor coefficients of $g$, thus
$$ g({\mathbf n}) = \sum_{{\mathbf i}\in \N^D} \binom{\mathbf{n}}{\mathbf{i}} g_{\mathbf i}.$$

We now prove the theorem, allowing all implied constants to depend on $\delta$ and $M$.  We may assume that $N$ is sufficiently large depending on $\delta,M$, since the claim is trivial otherwise.  A simple volume packing argument (using \cite[Corollary A.2]{green-tao-linearprimes}, for example, to control the boundary terms) shows that
$$ |(\mathbf{n_0}+\Lambda) \cap P| = \frac{\operatorname{vol}(P)}{[\Z^D:\Lambda]} + o_{N \to \infty}(N^D).$$
As a consequence, for $N$ large enough we may subtract off the mean of $F$ and normalise $F$ to have Lipschitz norm $1$ and mean zero, thus
$$ \big| \sum_{{\mathbf n} \in (\mathbf{n_0}+\Lambda) \cap P} F(g({\mathbf{n}})\Gamma) \big| \gg N^D.$$
As $\Lambda$ has index $\leq M$ in $\Z^D$, it contains the sublattice $q\Z^D$ for some positive integer $q = O(1)$.  By the pigeonhole principle, we may thus find $\mathbf{n_1} \in \Z^D$ of magnitude $O(1)$ such that
$$ \big| \sum_{{\mathbf n} \in (\mathbf{n_1}+q\Z^D) \cap P} F(g({\mathbf{n}})\Gamma) \big| \gg N^D,$$
and thus
$$ \big| \sum_{{\mathbf n} \in \Z^D \cap P'} F(g(q{\mathbf{n}}+{\mathbf n_1})\Gamma) \big| \gg N^D.$$
for some convex body $P'$ contains in a ball of radius $O(N)$ centered at the origin.

By subdividing $P'$ into cubes of sidelength $\eps N$ for some sufficiently small $\eps > 0$ (and again using \cite[Corollary A.2]{green-tao-linearprimes} to control the boundary terms), and then applying the pigeonhole principle, we see that
$$ \big| \sum_{{\mathbf n} \in \Z^D \cap {\mathbf n_2} + [\eps N]^D} F(g(q{\mathbf{n}}+{\mathbf n_1})\Gamma) \big| \gg N^D$$
for some $\eps \gg 1$ and ${\mathbf n_2} = O(N)$.  We can rearrange this as
$$ \big| \sum_{{\mathbf n} \in \Z^D \cap [\eps N]^D} F(g(q{\mathbf{n}}+{\mathbf n_3})\Gamma) \big| \gg N^D$$
for some ${\mathbf n_3}=O(N)$.

We may now invoke \cite[Theorem 8.6]{green-tao-nilratner} to conclude that there exists a nontrivial homomorphism $\eta : G \rightarrow \R$ which vanishes on $\Gamma$, has complexity $O(1)$ and such that 
\[ \Vert \eta \circ g(q \cdot + {\mathbf n_3}) \Vert_{C^{\infty}([N]^D)} \ll 1.\]
Applying \cite[Lemma 8.4]{green-tao-nilratner} we conclude that
\[ \Vert Q \eta \circ g(\cdot + {\mathbf n_3}) \Vert_{C^{\infty}([N]^D)} \ll 1\]
for some non-negative integer $Q = O(1)$.  Shifting the Taylor expansion by ${\mathbf n_3}$, we conclude that
\[ \Vert Q \eta \circ g \Vert_{C^{\infty}([N]^D)} \ll 1.\]
The claim follows (with $\eta$ replaced by $Q\eta$).

\section{The Baker-Campbell-Hausdorff formula}\label{bch}

Let $G$ be a connected, simply connected nilpotent Lie group, and let $\exp: {\mathfrak g} \to G$ and $\log: G \to {\mathfrak g}$ be the associated exponential and logarithm maps between $G$ and its Lie algebra ${\mathfrak g}$.   
The \emph{Baker-Campbell-Hausdorff formula} asserts that
$$
\exp( X_1 ) \exp( X_2 ) = \exp( X_1 + X_2 + \frac{1}{2} [X_1,X_2] + \prod_\alpha c_\alpha X_\alpha )$$
for any $X_1,X_2$, where $\alpha$ is a finite set of labels, $c_\alpha$ are real constants, and $X_\alpha$ are an iterated Lie bracket of $k_1 = k_{1,\alpha}$ copies of $X_1$ and $k_2 = k_{2,\alpha}$ copies of $X_2$ where $k_1,k_2 \geq 1$ and $k_1+k_2 \geq 2$.

Using this formula, it is a routine matter to see that for any $g_1,g_2 \in G$ and $x \in \R$, we have 
\begin{equation}\label{bch-2}
(g_1 g_2)^x = g_1^x g_2^x \prod_\alpha g_\alpha^{Q_\alpha(x)}
\end{equation}
where $\alpha$ is a finite set of labels, each $g_\alpha$ is an iterated of $k_1 = k_{1,\alpha}$ copies of $g_1$ and $k_2 = k_{2,\alpha}$ copies of $g_2$ where $k_1,k_2 \geq 1$ and $k_1+k_2 \geq 2$, and the $Q_\alpha: \R \to \R$ are polynomials of degree at most $k_1+k_2$ with no constant term.

In a similar vein, for any $g_1, g_2 \in G$ and $x_1,x_2 \in \R$, we have the formula
\begin{equation}\label{comm-form}
[g_1^{x_1}, g_2^{x_2}] = [g_1,g_2]^{x_1 x_2} \prod_\alpha g_\alpha^{P_\alpha(x_1,x_2)}
\end{equation}
where $\alpha$ is a finite set of labels, each $g_\alpha$ is an iterated commutator of $k_1 = k_{1,\alpha}$ copies of $g_1$ and $k_2 = k_{2,\alpha}$ copies of $g_2$ where $k_1,k_2 \geq 1$ and $k_1+k_2 \geq 3$, and the $P_\alpha: \R \times \R \to \R$ are polynomials of degree at most $k_1$ in $x_1$ and at most $k_2$ in $x_2$ which vanish when $x_1=0$ or $x_2=0$.

\section{True complexity and Cauchy-Schwarz complexity}\label{complexity-app}

This appendix (which does not appear in the published version of the paper, and which for some reason seems to be absent from the literature) relates the Gowers-Wolf and Cauchy-Schwarz notions of complexity. 

Let $\Psi = (\psi_1,\dots, \psi_t)$ be a collection of linear forms $\psi_i : \Z^D \rightarrow \Z$.  Recall (see Section \ref{gvn-sec}) that the \emph{Cauchy-Schwarz complexity} $s(\Psi)$ is the smallest $s$ such that the following is true. For every $i \in [t]$, one may partition the $t-1$ forms $\{\psi_j : j \in [t] \setminus \{i\}\}$, into $s+1$ classes such that $\psi_i$ does not lie in the linear span (over $\Q$) of the forms in any one class. 

Let us also define the \emph{Gowers-Wolf complexity} $\gw(\Psi)$ to be the smallest $s$ such that the forms $\psi_1^{s+1},\dots, \psi_t^{s+1}$ are linearly independent (cf. the statement of Theorem \ref{gwolf}). 

\begin{proposition}\label{gw-cs}
We have $\gw(\Psi) \leq s(\Psi)$.
\end{proposition}
\begin{proof}
Let $m \geq 1$. We need to show that if $\psi_1^{m},\dots, \psi_t^{m}$ are linearly dependent then, for some $i \in [t]$, the following is true. No matter how we partition the $t-1$ forms $\{\psi_j : j \in [t] \setminus \{i\}\}$ into $m$ classes, $\psi_i$ lies in the linear span of the forms in one of the classes. Let the linear relation be
\[ \lambda_1 \psi_1^{m} + \dots + \lambda_t \psi_t^{m} = 0,\] and suppose without loss of generality that $\lambda_1 \neq 0$. Suppose the forms $\{ \psi_2,\dots, \psi_t\}$ are partitioned into $m$ classes $S_1,\dots, S_m$; we will show that $\psi_1$ lies in the linear span of the $\{ \psi_j : j \in S_k\}$ for some $k \in [m]$.

For any linear form $\psi : \Q^D \rightarrow \Q$ we have the depolarisation identity \[ \psi(\mathbf{x}_1) \ldots \psi(\mathbf{x}_m) = \frac{(-1)^m}{m!} \sum_{\omega \in \{0,1\}^m} (-1)^{|\omega|} \psi(\omega_1 \mathbf{x}_1 + \ldots + \omega_m \mathbf{x}_m)^m\]
where $\omega = (\omega_1,\ldots,\omega_m)$ and $|\omega| := \omega_1+\ldots+\omega_m$. It follows that 
\begin{equation}\label{depolarised-new}  \lambda_1 (\psi_1(\mathbf{x}_1) \cdots \psi_1(\mathbf{x}_m)) + \dots + \lambda_t (\psi_1(\mathbf{x}_1) \cdots \psi_t(\mathbf{x}_m)) = 0,\end{equation} for any $\mathbf{x}_1,\dots, \mathbf{x}_m \in \Q^D$.
Let $k \in [m]$, and suppose that $\psi_1$ is not in the linear span of the $\{ \psi_j : j \in S_k\}$. Then we may choose $\mathbf{x}_k \in \Q^D$ such that $\psi_j(\mathbf{x}_k) = 0$ for all $j \in S_k$, but $\psi_1(\mathbf{x}_k) = 1$. Substituting into \eqref{depolarised-new} gives $\lambda_1 = 0$, contrary to assumption.
\end{proof}

\providecommand{\bysame}{\leavevmode\hbox to3em{\hrulefill}\thinspace}

\end{document}